\def\red{\color{red}}
\def\rr{{\mathbb R}}
\def\rn{{\mathbb{R}^n}}
\def\zz{{\mathbb Z}}
\def\nn{{\mathbb N}}
\def\cf{{\mathcal F}}
\def\cs{{\mathcal S}}
\def\cy{{\mathcal Y}}
\def\fz{\infty }
\def\tz{\theta}
\def\lf{\left}
\def\r{\right}
\def\la{\langle}
\def\ra{\rangle}
\def\noz{\nonumber}
\def\loc{{\mathop\mathrm{\,loc\,}}}
\def\supp{\mathop\mathrm{\,supp\,}}
\def\fin{\mathop\mathrm{\,fin\,}}
\def\BMO{\mathop\mathrm{\,BMO\,}}
\def\bmo{\mathop\mathrm{\,bmo\,}}
\def\Xint#1{\mathchoice
{\XXint\displaystyle\textstyle{#1}}%
{\XXint\textstyle\scriptstyle{#1}}%
{\XXint\scriptstyle\scriptscriptstyle{#1}}%
{\XXint\scriptscriptstyle\scriptscriptstyle{#1}}%
\!\int}
\def\XXint#1#2#3{{\setbox0=\hbox{$#1{#2#3}{\int}$ }
\vcenter{\hbox{$#2#3$ }}\kern-.6\wd0}}
\def\dashint{\Xint-}
 \def\la{{\langle}}
 \def\ra{{\rangle}}
\def\({\left(}
\def \){ \right)}
 \def\supp{\operatorname{\,supp\,}}
 \def\sign{\operatorname{sign}}
\newtheorem{theorem}{Theorem}[section]
\newtheorem{lemma}[theorem]{Lemma}
\theoremstyle{definition}
\newtheorem{remark}[theorem]{Remark}
\newtheorem{definition}[theorem]{Definition}
\renewcommand{\appendix}{\par
   \setcounter{section}{0}%
   \setcounter{subsection}{0}%
   \setcounter{subsubsection}{0}%
   \gdef\thesection{\@Alph\c@section}%
   \gdef\thesubsection{\@Alph\c@section.\@arabic\c@subsection}%
   \gdef\theHsection{\@Alph\c@section.}%
   \gdef\theHsubsection{\@Alph\c@section.\@arabic\c@subsection}%
   \csname appendixmore\endcsname
 }
\numberwithin{equation}{section}
\begin{document}

\arraycolsep=1pt

\title{\bf\Large Real-Variable Characterizations of
Local Orlicz-Slice Hardy Spaces with Application to Bilinear Decompositions\footnotetext{\hspace{-0.35cm} 2010 {\it
Mathematics Subject Classification}. Primary 42B30;
Secondary 42B15, 42B35, 46E30, 42C40.
\endgraf {\it Key words and phrases.} Hardy space, Orlicz space,
atom, maximal function, bilinear decomposition.
\endgraf This project is supported
by the National Natural Science Foundation of China (Grant Nos.
11971058, 11761131002, 11671185 and 11871100).
}}
\author{Yangyang Zhang, Dachun Yang\,\footnote{Corresponding
author, E-mail: \texttt{dcyang@bnu.edu.cn}/{\red March 10, 2020}/Final version.}
\  and Wen Yuan}
\date{}
\maketitle

\vspace{-0.7cm}

\begin{center}
\begin{minipage}{13cm}
{\small {\bf Abstract}\quad
Recently, both the bilinear decompositions
$h^1(\mathbb R^n)\times \bmo(\mathbb{R}^n)\subset L^1(\mathbb R^n)+h_\ast^{\Phi}(\mathbb R^n)$
and $h^1(\mathbb R^n)\times \bmo(\mathbb{R}^n)\subset L^1(\mathbb R^n)+h^{\log}(\mathbb R^n)$
were established. In this article, the authors prove in some sense that the former is sharp, while the latter
is not. To this end, the authors first introduce the local Orlicz-slice Hardy space which contains
the variant $h_\ast^{\Phi}(\mathbb R^n)$ of the local Orlicz Hardy space
introduced by A. Bonami and J. Feuto
as a special case, and obtain its dual space by establishing its
characterizations via atoms, finite atoms and
various maximal functions, which are new even for $h_\ast^{\Phi}(\mathbb R^n)$.
The relationships $h_\ast^{\Phi}(\mathbb R^n)\subsetneqq h^{\log}(\mathbb R^n)$
is also clarified.}
\end{minipage}
\end{center}

\vspace{0.2cm}

\section{Introduction}\label{s1}

The bilinear decomposition of the product of Hardy spaces and their
dual spaces was originally studied by Bonami et al. \cite{BIJZ07}, which plays key roles
in improving the estimates of many nonlinear quantities
such as div-curl products, weak Jacobians (see, for instance,  \cite{BGK12,BFG,CLMS})
and commutators (see, for instance,  \cite{Ky13,LCFY}).
One of the most important result in this direction was made by Bonami et al. \cite{BGK12},
in which
Bonami et al.
proved the following
bilinear decomposition:
\begin{equation}\label{h123}
H^1(\rn)\times\BMO(\rn)\subset L^1(\rn)+H^{\rm log}(\rn),
\end{equation}
where $H^{\rm log}(\rn)$ was introduced in \cite{Ky14}, which denotes
the Musielak--Orlicz Hardy space related to the following Musielak--Orlicz function:
\begin{align}\label{2221}
\theta(x,\tau):=\frac{\tau}{\log(e+|x|)+\log(e+\tau)}
,\ \ \forall\,x\in\rn,\ \forall\,\tau\in [0,\fz).
\end{align}
Moreover, Bonami et al. in \cite{BGK12,bfgk} deduced that
\eqref{h123} is sharp in some sense and, in \cite{BK},
they proved that, in dimension
one, $H^{\rm log}(\rr)$ is indeed the smallest space, in the sense of the inclusion of sets, having the above property.
Recently, in \cite{bckly,blyy}, a bilinear decomposition theorem for multiplications of functions in
$H^p(\mathbb R^n)$
and its dual space $\mathfrak{C}_{\alpha}(\mathbb{R}^n)$
was established when $p\in(0,1)$ and $\alpha:=1/p-1$,
and the sharpness in some sense of this bilinear decomposition was also obtained therein.

For the local Hardy space,
Bonami et al. \cite{bf} established some \emph{linear} decomposition
of the product of the local Hardy space and its dual space.
Moreover, Bonami et al. \cite{bf} introduced the local Hardy-type space $h_*^\Phi(\rn)$, where
$\Phi(\tau):=\frac{\tau}{\log(e+\tau)}$ for any $\tau\in[0,\infty)$,
and proved the following \emph{linear} decomposition:
$$
h^1(\rn)\times\bmo(\rn)\subset L^1(\rn)+h_\ast^{\Phi}(\rn).
$$
Cao et al. \cite{cky1} obtained the \emph{bilinear} decomposition of product functions in the
local Hardy space $h^p(\rn)$ and its dual space for any $p\in(\frac{n}{n+1},1]$.
Recently,
a bilinear decomposition of the product of the
local Hardy space $h^p(\rn)$ and its dual space
with $p\in(0,1)$ was established in \cite{zyy} and
the sharpness of this bilinear decomposition was also obtained therein.
Observe that, for $p=1$, Cao et al. \cite{cky1} proved that
\begin{align}\label{cao1}
h^1(\rn)\times\bmo(\rn)\subset L^1(\rn)+h_\ast^{\Phi}(\rn)
\end{align}
and
\begin{align}\label{cao2}
h^1(\rn)\times\bmo(\rn)\subset L^1(\rn)+h^{\log}(\rn),
\end{align}
where $h^{\rm log}(\rn)$
denotes the local Hardy space of Musielak--Orlicz
type associated to the Musielak--Orlicz function
$\tz$ as in \eqref{2221} (see \cite{yy2}).
However, the sharpness of the bilinear decomposition
\eqref{cao1} and \eqref{cao2} is \emph{still unclear}.
Thus, it is a quite natural question to ask which one of \eqref{cao1} and \eqref{cao2}
is sharp and we give an affirmative answer to this question in this article by proving
that \eqref{cao1} is sharp, while \eqref{cao2} is not.
To this end, we need
first to give the dual space of the
local Hardy-type space $h_*^\Phi(\rn)$ in \cite{bf},
which forces us to establish real-variable characterizations of the local Hardy
type space $h_\ast^{\Phi}(\rn)$. For this purpose,
we introduce and study a new kind of local Hardy-type spaces,
the local Orlicz-slice Hardy space $(hE_\Phi^q)_t(\rn)$ (see Definition \ref{dh} below)
which contains the
local Hardy-type space $h_*^\Phi(\rn)$
as a special case. Compared with the sharpness
of the global case in \eqref{h123} on the Hardy
space $H^1(\rn)$, this sharpness of the local
case in \eqref{cao1} on the local Hardy space
$h^1(\rn)$ is a little bit surprising, which reveals the
essential difference between the homogeneous case and the
inhomogeneous case.

Let us give a brief review on the study of various variants of classical local Hardy spaces.
The classical local Hardy space $h^p(\rn)$ with $p\in (0, 1]$, introduced
by Goldberg in
\cite{g}, is known to be one of the most basic
working spaces on $\rn$ in harmonic analysis and partial differential equations, which plays key roles
in many branches of analysis; see, for instance, \cite{b81,g,r,t83,t92,t91} and their references.
In particular, pseudo-differential
operators are bounded on local Hardy spaces $h^p(\rn)$
with $p\in (0, 1]$, but they are not bounded on Hardy spaces $H^p(\rn)$
with $p\in (0, 1]$ (see, for instance, \cite{g,t83,t92}).
In recent decades, various variants of local Hardy spaces have been introduced and
developed; these variants include weighted local Hardy spaces (see, for instance, \cite{b81,t,r}),
weighted local Orlicz Hardy spaces (see, for instance, \cite{yy}) and
local Hardy-amalgam spaces (see, for instance, \cite{af2}).
Recently, in \cite{zyyw}, we introduced and studied a class of Orlicz-slice
spaces, $(E_\Phi^q)_t(\rn)$, which generalize the slice space studied by Auscher and Mourgoglou \cite{am}
as well as by Auscher and Prisuelos-Arribas \cite{ap}.
Based on these Orlicz-slice spaces, in \cite{zyyw}, we developed a real-variable theory of a
new kind of Hardy-type spaces, $(HE_\Phi^q)_t(\rn)$, which
contains the variant of the Orlicz Hardy space $H_*^\Phi(\rn)$ (see \cite{bf} of Bonami and Feuto)
and the Hardy-amalgam space (see \cite{af1} of Abl\'e and Feuto) as special cases.
However, no other real-variable theory of local Hardy-type spaces based on the Orlicz-slice space
$(E_\Phi^q)_t(\rn)$ is known so far.

The main target of this article is to determine
the sharpness of the bilinear decompositions \eqref{cao1} and \eqref{cao2}.
To achieve this,
we need to establish some real-variable characterizations of
the local Orlicz-slice Hardy space $(hE_\Phi^q)_t(\rn)$,
which is defined via the local maximal function
of Peetre type. This new scale of local Orlicz-slice Hardy spaces
contains the variant $h_*^\Phi(\rn)$ of the local Orlicz Hardy space
[in this case, $q=t=1$] as well as the local Hardy-amalgam space $\mathcal{H}^{(p,q)}_{\loc}(\rn)$
[in this case, $t=1$ and $\Phi(\tau):=\tau^p$ for any $\tau\in [0,\fz)$ with $p\in (0,\fz)$] of Abl\'e and Feuto \cite{af2}
as special cases. Their characterizations via the atom and
various maximal functions are also obtained.
We point out that,
compared with the atomic characterization of $\mathcal{H}_{\loc}^{(p,q)}(\rn)$
obtained in \cite{af2}, the atomic characterization of $(hE_\Phi^q)_t(\rn)$ obtained
in this article holds true on wider ranges of $p$ and $q$ even when $(hE_\Phi^q)_t(\rn)$ is reduced
to $\mathcal{H}^{(p,q)}_{\loc}(\rn)$, which
hence improves the corresponding result in \cite{af2}.
We then establish finite atomic characterizations
of $(hE_\Phi^q)_t(\rn)$, which further induces a description of their dual spaces.
These characterizations are new even for local slice Hardy spaces
based on the slice spaces in \cite{ap}.
Thus, the results obtained in this article essentially
generalize the corresponding real-variable theories of the
local Hardy-amalgam space in \cite{af2}
as well as the local Hardy-type space $h_*^\Phi(\rn)$ in \cite{bf}.
Then, as an application, we obtain the
characterization of the pointwise multipliers on $\bmo(\mathbb{R}^n)$
[see Theorem \ref{cheng}(iii) below]
and the sharpness of the bilinear decomposition \eqref{cao1} (see Remark \ref{mm} below).
Moreover, by showing that
$h_\ast^{\Phi}(\rn)\subsetneqq h^{\log}(\rn)$
[see Theorem \ref{cheng}(v) below],
we conclude that the bilinear decomposition \eqref{cao2} is not sharp (see Remark \ref{mm} below for the details).

To be precise, this article is organized as follows.

Section \ref{s3} is devoted to establishing real-variable characterizations of
the local Hardy-type space $(hE_\Phi^q)_t(\rn)$.
We first recall the notion
of Orlicz-slice spaces $(E_\Phi^q)_t(\rn)$ and some basic properties of
$(E_\Phi^q)_t(\rn)$.
Then, based on the Orlicz-slice space $(E_\Phi^q)_t(\rn)$,
we introduce
the local Orlicz-slice Hardy space,
$(hE_\Phi^q)_t(\rn)$
(see Definition \ref{dh} below).
Applying the real-variable theory of local Hardy spaces related to ball
quasi-Banach function spaces  developed in \cite{ykds},
we directly obtain their characterizations via various maximal functions.
Moreover, we further establish the atomic characterization (see Theorem \ref{atom ch} below)
and their finite atomic characterization (see Theorem \ref{finite} below).
Applying the last mentioned both characterizations of
$(hE_\Phi^q)_t(\rn)$,
we prove that the dual space of $(hE_\Phi^q)_t(\rn)$
is certain local Campanato spaces related to
the local Orlicz-slice space (see Theorem \ref{dual2} below).

In Section \ref{s5}, by means of the dual space of $h_*^\Phi(\rn)$,
we
characterize the pointwise multipliers of the local $\BMO$ space $\bmo(\mathbb{R}^n)$
[see Theorem \ref{cheng}(iii) below]
and the sharpness of the bilinear decomposition \eqref{cao1} (see Remark \ref{mm} below).
Moreover,
we show that the bilinear decomposition \eqref{cao2} is not sharp (see Remark \ref{mm} below).

Finally, we make some convention on notation.
For any $x\in\rn$ and $r\in(0,\infty)$, let $B(x,r):=\{y\in\rn:|x-y|<r\}$ and
$L^1_\loc(\rn)$ the set of all locally integrable functions on $\rn$.
For any set $E$, we use $\mathbf1_{E}$ to denote its \emph{characteristic function}
and $E^{\complement}:=\rn\setminus E$.
We also use $\vec{0}_n$ to denote the \emph{origin}
of $\rn$.
Let $\mathcal{S}(\rn)$ denote the collection of all
\emph{Schwartz functions} on $\rn$, equipped
with the classical well-known topology
determined by a sequence of norms, and $\mathcal{S}'(\rn)$ its \emph{topological dual}, namely,
the set of all bounded linear functionals on $\mathcal{S}(\rn)$
equipped with the weak-$\ast$ topology.
Let $\mathbb{N}:=\{1,\,2,...\}$ and $\mathbb{Z}_+:=\mathbb{N}\bigcup\{0\}$.
Throughout this article, we use ${\mathcal Q}$ to denote the set
of all cubes in $\rn$ having their edges parallel to the coordinate axes,
which are not necessary to be closed or open.
For any cube $Q:=Q(x_Q,\ell(Q))\in{\mathcal Q}$,
we denote by $x_Q$ its \emph{center} and by $\ell(Q)$ its \emph{side length},
and, for any $\alpha\in(0,\infty)$, let $\alpha Q:=Q(x_Q,\alpha \ell(Q))$.
For any $\varphi\in\mathcal{S}(\rn)$ and $t\in(0,\infty)$,
let $\varphi_t(\cdot):=t^{-n}\varphi(t^{-1}\cdot)$. For any $s\in\mathbb{R}$, we denote by $\lfloor s\rfloor$ the \emph{largest integer not greater than} $s$.
We always use $C$ to denote a \emph{positive constant}, which is independent of the main parameter,
but it may vary from line to line.
Moreover, we use $C_{(\gamma,\ \beta,\ \ldots)}$ to denote a positive constant depending on the indicated
parameters $\gamma,\ \beta,\ \ldots$. If, for any real functions $f$ and $g$, $f\leq Cg$, we then write
$f\lesssim g$ and, if $f\lesssim g\lesssim f$, we then write $f\sim g$. For any $\alpha:=(\alpha_1,\ \ldots,\ \alpha_n)\in\zz_+^n$ and $x:=(x_1,\ldots,x_n)\in\rn$, define
$|\alpha|:=\alpha_1+ \cdots+\alpha_n$,
$\partial^\alpha:=\partial_{x_1}^{\alpha_1}\cdots\partial_{x_n}^{\alpha_n}$ with
$\partial_{x_j}:=\frac{\partial}{\partial x_j}$ for any $j\in\{1,\ldots,n\}$ and
$x^\alpha:=x_1^{\alpha_1}\cdots x_n^{\alpha_n}$. For any $r\in [1,\fz]$, we use
$r'$ to denote its \emph{conjugate index}, namely, $1/r+1/r'=1$.

\section{Local Orlicz-slice Hardy spaces \label{s3}}

In this section, we introduce the local Orlicz-slice Hardy space via the local maximal function of Peetre
type. We then
establish several real-variable characterizations of the local Orlicz-slice Hardy space,
respectively, in terms of
various local maximal functions, atoms and finite atoms. As an application,
we determine the dual space of the
local Orlicz-slice Hardy space $(hE_\Phi^q)_t(\rn)$ with $\max\{p_{\Phi}^+,\ q\}\in(0,1]$.

\subsection{Maximal function characterizations
of local Orlicz-slice Hardy spaces\label{s3.1}}

In this subsection, we introduce the local
Orlicz-slice Hardy space $(hE_\Phi^q)_t(\rn)$ and
establish its various maximal function characterizations.
We begin with recalling the notions of both Orlicz functions
and Orlicz spaces (see, for instance, \cite{mmz}).

\begin{definition}\label{d1.1}
A function $\Phi:\ [0,\infty)\ \to\ [0,\infty)$ is called an \emph{Orlicz function} if it is
non-decreasing and satisfies $\Phi(0)= 0$, $\Phi(\tau)>0$ whenever $\tau\in(0,\infty)$ and $\lim_{\tau\to\infty}\Phi(\tau)=\infty$.
\end{definition}

An Orlicz function $\Phi$  is said to be
of \emph{lower} (resp., \emph{upper}) \emph{type} $p$ with
$p\in(-\infty,\infty)$ if
there exists a positive constant $C_{(p)}$, depending on $p$, such that, for any $\tau\in[0,\infty)$
and $s\in(0,1)$ [resp., $s\in [1,\infty)$],
\begin{equation*}
\Phi(s\tau)\le C_{(p)}s^p \Phi(\tau).
\end{equation*}
A function $\Phi:\ [0,\infty)\ \to\ [0,\infty)$ is said to be of
 \emph{positive lower} $p$ (resp., \emph{upper}) \emph{type} if it is of lower
 (resp., upper) type $p$ for some $p\in(0,\infty)$.

\begin{definition}\label{d1.2}
Let $\Phi$ be an Orlicz function with positive lower type $p_{\Phi}^-$ and positive upper type $p_{\Phi}^+$.
The \emph{Orlicz space $L^\Phi(\rn)$} is defined
to be the set of all measurable functions $f$ such that
 $$\|f\|_{L^\Phi(\rn)}:=\inf\lf\{\lambda\in(0,\infty):\ \int_{\rn}\Phi\lf(\frac{|f(x)|}{\lambda}\r)\,dx\le1\r\}<\infty.$$
\end{definition}

We now introduce the following Orlicz-slice space, which is a
generalization of the slice space introduced in \cite{ap} (see also \cite{am})
and was first introduced in \cite[Definition 2.8]{zyyw}.

\begin{definition}\label{d2}
Let $t,\ q\in(0,\infty)$ and $\Phi$ be an Orlicz function with positive lower type $p_{\Phi}^-$ and positive upper type $p_{\Phi}^+$. The \emph{Orlicz-slice space} $(E_\Phi^q)_t(\rn)$ is defined to be the set of all measurable functions $f$
such that $$\|f\|_{(E_\Phi^q)_t(\rn)}
:=\lf\{\int_{\rn}\lf[\frac{\|f\mathbf1_{B(x,t)}\|_{L^\Phi(\rn)}}
{\|\mathbf1_{B(x,t)}\|_{L^\Phi(\rn)}}\r]^q\,dx\r\}^{\frac{1}{q}}<\infty.$$
\end{definition}

\begin{remark}
\begin{enumerate}
\item[\rm{(i)}] The Orlicz-slice space is a quasi-Banach space (see \cite[Remark 2.10(i)]{zyyw}.
Let $q\in[1,\infty)$ and $\Phi$ be an Orlicz function with lower type
$p_{\Phi}^-\in[1,\infty)$ and positive upper type $p_{\Phi}^+$.
In this case, we may \emph{always assume} that $(E_\Phi^q)_t(\rn)$
is a Banach space (see \cite[Remark 3.17]{zyyw}.

\item[\rm{(ii)}]Let $q\in(0,\infty)$. If $t=1$ and $\Phi(\tau):=\tau^p$
for any $\tau\in [0,\fz)$ with $p\in (0,\fz)$,
then $(E_\Phi^q)_t(\rn)$ coincides with the Wiener amalgam spaces $(L^p,\ell^q)(\rn)$ in \cite{af1}.
If $\Phi(\tau):=\tau^r$ for any $\tau\in[0,\fz)$ with $r\in(0,\fz)$,
then $(E_\Phi^q)_t(\rn)$ and $(E_r^q)_t(\rn)$ from \cite{am,ap} coincide with equivalent
quasi-norms.
If $\Phi(\tau):=\tau^q$ for any $\tau\in[0,\fz)$,
then $(E_\Phi^q)_t(\rn)$ and $L^q(\rn)$ coincide with the same quasi-norms
(see \cite[Proposition 2.11]{zyyw}).
\end{enumerate}
\end{remark}

Recall that the \emph{centered Hardy-Littlewood maximal operator} $\mathcal{M}$ is defined by setting, for
any locally integrable function $f$ and any $x\in\rn$,
$$
\mathcal{M}(f)(x):=\sup_{r>0}\dashint_{B(x,r)}|f(y)|\,dy.
$$

The following lemma is a Fefferman--Stein type inequality for Orlicz-slice spaces, which is just
\cite[Theorem 2.20]{zyyw}.
\begin{lemma}\label{main}
Let $t\in(0,\infty)$, $q,\ r\in(1,\infty)$ and $\Phi$ be an Orlicz function with positive lower type $p_{\Phi}^-\in(1,\infty)$ and positive upper type $p_{\Phi}^+$.
Then there exists a positive constant $C$,
independent of $t$,
such that, for any $\{f_j\}_{j\in\mathbb{Z}}\subset (E_\Phi^q)_t(\rn)$,
$$
\lf\|\lf\{\sum _{j\in\mathbb{Z}}\lf[\mathcal{M}(f_j)\r]^r\r\}^{\frac{1}{r}}\r\|_{(E_\Phi^q)_t(\rn)}
\leq C\lf\|\lf\{\sum _{j\in\mathbb{Z}}|f_j|^r\r\}^{\frac{1}{r}}\r\|_{(E_\Phi^q)_t(\rn)}.
$$
\end{lemma}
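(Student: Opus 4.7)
The plan is to prove the vector-valued inequality by a localization-plus-tail decomposition, leveraging the classical vector-valued Fefferman--Stein inequality on the Orlicz space $L^\Phi(\rn)$ (which is available since $p_\Phi^->1$ and $r>1$, as it follows from Rubio de Francia extrapolation from the scalar $L^p$ case). Fix $x\in\rn$; splitting the defining supremum of $\cm(f_j)(y)$ at radius $t$ and using that $B(y,\rho)\subset B(x,2t)$ when $\rho\leq t$ and $B(y,\rho)\subset B(x,2\rho)$ when $\rho>t$, one obtains the pointwise estimate
\begin{equation*}
\cm(f_j)(y) \leq \cm(f_j\mathbf{1}_{B(x,2t)})(y) + 2^n \cm(f_j)(x), \qquad y\in B(x,t).
\end{equation*}

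Next I would take the $\ell^r$-norm in $j$ and then the $L^\Phi(\rn)$-norm in $y$ on $B(x,t)$. For the local part, the $L^\Phi$-Fefferman--Stein inequality produces
\begin{equation*}
\lf\|\mathbf{1}_{B(x,t)}\lf\{\sum_{j}\lf[\cm(f_j\mathbf{1}_{B(x,2t)})\r]^r\r\}^{1/r}\r\|_{L^\Phi(\rn)} \lesssim \lf\|\mathbf{1}_{B(x,2t)}\lf\{\sum_{j}|f_j|^r\r\}^{1/r}\r\|_{L^\Phi(\rn)}.
\end{equation*}
Dividing by $\|\mathbf{1}_{B(x,t)}\|_{L^\Phi(\rn)}$, using that $\|\mathbf{1}_{B(x,2t)}\|_{L^\Phi(\rn)}\sim\|\mathbf{1}_{B(x,t)}\|_{L^\Phi(\rn)}$ with constant depending only on the positive lower and upper types of $\Phi$, and then taking the $q$-th power integral in $x$, the local contribution is bounded by the $(E_\Phi^q)_{2t}(\rn)$-norm of $\{\sum_j|f_j|^r\}^{1/r}$. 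A standard covering of $B(x,2t)$ by a dimensional constant number of balls of radius $t$ shows that this norm is equivalent to the $(E_\Phi^q)_{t}(\rn)$-norm, with constants independent of $t$.

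For the tail contribution $\{\sum_j[\cm(f_j)(x)]^r\}^{1/r}$, the goal is to control its outer $L^q$-average in $x$ by the $(E_\Phi^q)_t(\rn)$-norm of $\{\sum_j|f_j|^r\}^{1/r}$. Since there is no direct embedding between $L^q(\rn)$ and $(E_\Phi^q)_t(\rn)$, I would first re-express $\cm(f_j)(x)$ in slice language: using Hölder's inequality in $L^\Phi$ against its conjugate Orlicz function, together with the $\Phi$-doubling of $\|\mathbf{1}_B\|_{L^\Phi(\rn)}$ on balls, one obtains the pointwise bound
\begin{equation*}
\cm(f_j)(x) \lesssim \cm\lf(\dfrac{\|f_j\mathbf{1}_{B(\cdot,t)}\|_{L^\Phi(\rn)}}{\|\mathbf{1}_{B(\cdot,t)}\|_{L^\Phi(\rn)}}\r)(x)
\end{equation*}
up to a constant independent of $t$. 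The scalar $L^q$ vector-valued Fefferman--Stein inequality then yields the desired bound in terms of the $(E_\Phi^q)_t(\rn)$-norm.

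The main obstacle will be precisely this last comparison: establishing, uniformly in $t$, a pointwise domination of $\cm(f_j)(x)$ by a Hardy--Littlewood maximal function of the slice-normalized averages, so that the outer integration closes inside $(E_\Phi^q)_t(\rn)$ rather than in $L^q(\rn)$. All the other ingredients—the pointwise split at scale $t$, the $L^\Phi$-Fefferman--Stein, and the scale-change equivalence of the slice norm—are comparatively routine once the positive type properties of $\Phi$ are exploited.
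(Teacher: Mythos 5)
Your localization strategy is a legitimate independent route (the paper itself gives no argument for this lemma: it simply quotes \cite[Theorem 2.20]{zyyw}), and the local half of your plan is sound granted the vector-valued Fefferman--Stein inequality on $L^\Phi(\rn)$ for $1<p_\Phi^-\le p_\Phi^+<\infty$ and the uniform-in-$t$ equivalence of the $(E_\Phi^q)_t$- and $(E_\Phi^q)_{2t}$-norms. The tail, however, is exactly where your proposal breaks, and in two concrete places. First, the pointwise domination $\cm(f_j)(x)\lesssim\cm\big(\|f_j\mathbf 1_{B(\cdot,t)}\|_{L^\Phi(\rn)}/\|\mathbf 1_{B(\cdot,t)}\|_{L^\Phi(\rn)}\big)(x)$ is false for the full maximal operator: radii $\rho\ll t$ detect concentrations that scale-$t$ slice averages cannot see. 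Take $\Phi(\tau)=\tau^p$, $t=1$ and $f=\epsilon^{-n/p}\mathbf 1_{B(\vec 0_n,\epsilon)}$: then $\cm(f)\gtrsim\epsilon^{-n/p}$ on $B(\vec 0_n,\epsilon)$ while the slice-normalized average is $\lesssim 1$ everywhere; choosing $q>p$ even the normed consequence you need, $\|\{\sum_j[\cm(f_j)]^r\}^{1/r}\|_{L^q(\rn)}\lesssim\|\{\sum_j|f_j|^r\}^{1/r}\|_{(E_\Phi^q)_t(\rn)}$, fails, since the left side blows up like $\epsilon^{n/q-n/p}$ while the right side stays bounded. This is repairable only because your own splitting produces radii $\rho>t$ in the tail; you must keep the truncated operator $\sup_{\rho>t}$ instead of enlarging it to $\cm(f_j)(x)$.

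Second, even with that truncation your order of operations does not close. Applying Orlicz--H\"older to each $f_j$ separately to form $g_j:=\|f_j\mathbf 1_{B(\cdot,t)}\|_{L^\Phi(\rn)}/\|\mathbf 1_{B(\cdot,t)}\|_{L^\Phi(\rn)}$ and then the classical $L^q(\ell^r)$ Fefferman--Stein inequality leaves you needing $\{\sum_j\|f_j\mathbf 1_{B}\|_{L^\Phi(\rn)}^r\}^{1/r}\lesssim\|\{\sum_j|f_j|^r\}^{1/r}\mathbf 1_{B}\|_{L^\Phi(\rn)}$ on balls of radius $t$; this Minkowski-type embedding fails whenever the type of $\Phi$ exceeds $r$, which the hypotheses allow: for $\Phi(\tau)=\tau^p$ with $p>r$ and $N$ disjointly supported $f_j$ inside one ball, the left side grows like $N^{1/r}$ and the right like $N^{1/p}$. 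The repair is to postpone the H\"older step: for $\rho>t$ use Fubini to dominate $\dashint_{B(x,\rho)}|f_j|$ by $\cm(h_j)(x)$ with $h_j(z):=\dashint_{B(z,t)}|f_j|$ (plain averages), apply the classical $L^q(\ell^r)$ Fefferman--Stein inequality to $\{h_j\}_j$, then Minkowski's integral inequality gives $\{\sum_j h_j^r\}^{1/r}\le\dashint_{B(\cdot,t)}\{\sum_j|f_j|^r\}^{1/r}$, and only at this last stage apply H\"older in $L^\Phi(\rn)$, together with $\|\mathbf 1_{B}\|_{L^\Phi(\rn)}\|\mathbf 1_{B}\|_{L^{\Phi^*}(\rn)}\sim|B|$ for the complementary function $\Phi^*$, to the single function $\{\sum_j|f_j|^r\}^{1/r}$. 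With these two corrections (truncation at scale $t$, and summing in $\ell^r$ before invoking the Orlicz structure), your scheme does yield a complete proof with constants independent of $t$.
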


In what follows,  for any $N\in\nn$, let
\begin{equation}\label{EqcfN}
\cf_N(\rn):=\lf\{\varphi\in\cs(\rn):\ \sum_{\beta\in\zz_+^n,|\beta|\le N}
\sup_{x\in\rn}\lf[\lf(1+|x|\r)^{N+n}\lf|\partial^\beta\varphi(x)\r|\r]\le1\r\},
\end{equation}
here and thereafter, for any $\beta:=(\beta_1,\ldots,\beta_n)\in\zz_+^n$
and $x:=(x_1,\ldots,x_n)\in\rn$,
$|\beta|:=\beta_1+\cdots+\beta_n$,
$\partial^\beta:=(\frac{\partial}{\partial x_1})^{\beta_1}
\cdots(\frac{\partial}{\partial x_n})^{\beta_n}$
and $x^\beta:=x_1^{\beta_1}\cdots x_n^{\beta_n}$.
Now we introduce the following notions of the local radial functions
and the local non-tangential maximal functions.

\begin{definition}\label{jdhs}
Let $a,\,b\in(0,\infty)$, $N\in\mathbb{N}$, $\varphi\in\mathcal{S}(\rn)$ and $f\in\mathcal{S}'(\rn)$.
\begin{enumerate}

\item[{\rm(i)}] The \emph{local radial maximal function $m(f,\varphi)$} of $f$ associated to $\varphi$
is defined by setting, for any $x\in\rn$,
$$
m(f,\varphi)(x):=\sup_{s\in(0,1)}|f\ast\varphi_s(x)|,
$$
where, for any $s\in(0,\infty)$ and $x\in\mathbb R^n,\varphi_s(x):=s^{-n}\varphi(x/s)$.

\item[{\rm(ii)}] The \emph{local grand maximal function} $m_N(f)$ is defined by setting, for any $x\in\rn$,
$$m_N(f)(x):=\sup\lf\{|\varphi_\tau  \ast f(y)|:\ \tau\in(0,1),\,|x-y|<\tau,\,
 \varphi\in\mathcal{F}_N(\rn)\r\}.$$

\item[{\rm(iii)}]  The \emph{local non-tangential maximal function} $m_a^*(f,\varphi)$, with aperture $a\in(0,\infty)$,
 is defined by setting, for any $x\in\rn$,
$$m_a^*(f,\varphi)(x):=\sup_{\tau\in(0,1)}\lf\{\sup_{y\in\rn,|y-x|<a\tau}|(\varphi_\tau \ast f)(y)|\r\}.$$

\item[{\rm(iv)}] The \emph{local maximal function $m_b^{**}(f,\varphi)$ of Peetre type} is defined by setting, for any $x\in\rn$,
\begin{equation}\label{51}
m_b^{**}(f,\varphi)(x):=\sup_{(y,\tau)\in\mathbb{R}^{n}\times(0,1)}\frac{|(\varphi_\tau \ast
 f)(x-y)|}{(1+\tau^{-1}|y|)^b}.
\end{equation}

\item[{\rm(v)}] The \emph{local grand maximal function $m(f,\varphi)$ of Peetre type} is defined by setting, for any $x\in\rn$,
$$m_{b,\,N}^{**}(f)(x):=\sup_{\psi\in\mathcal{F}_N(\rn)}\lf\{\sup_{(y,\tau)
\in\mathbb{R}^{n}\times(0,1)}\frac{|(\psi_\tau \ast f)(x-y)|}{(1+\tau^{-1}|y|)^b}\r\}.$$
\end{enumerate}
\end{definition}

\begin{definition}\label{dh}
Let $t$, $q\in(0,\infty) $ and $\Phi$ be an Orlicz function with positive lower type $p_{\Phi}^-$ and positive upper type $p_{\Phi}^+$.
Then the \emph{local Orlicz-slice Hardy space} $(hE_\Phi^q)_t(\rn)$ is defined by setting
$$
(hE_\Phi^q)_t(\rn):=\lf\{f\in\mathcal{S}'(\rn):\ \|f\|_{(hE_\Phi^q)_t(\rn)}:=
\|m_b^{**}(f,\varphi)\|_{(E_\Phi^q)_t(\rn)}<\infty \r\},
$$
where $\varphi\in\mathcal{S}(\rn)$ satisfies $\int_{\rn}\varphi(x)\,dx\neq0$ and $m_b^{**}(f,\varphi)$ is as in (\ref{51}) with $b$
sufficiently large (see Remark \ref{214x} below).
\end{definition}

\begin{remark}\label{ReHpq}
\begin{enumerate}
\item[(i)] If $t=1$, $\Phi(\tau):=\tau^p$
for any $\tau\in [0,\fz)$ with $p\in (0,\fz)$ and $q=p$,
then $(hE_\Phi^q)_t(\rn)$ coincides with the local Hardy space $h^{p}(\rn)$.
\item[(ii)] If $t=q=1$ and $\Phi(\tau):=\frac{\tau}{\log(e+\tau)}$ for any $\tau\in [0,\fz)$,
by \cite[Proposition 2.12]{zyyw}, we know that $(hE_\Phi^q)_t(\rn)$ coincides with
 the variant $h_*^\Phi(\rn)$ of the Orlicz Hardy space of Bonami and Feuto in \cite{bf}.
\end{enumerate}
\end{remark}

To obtain various maximal function characterizations of
the local Hardy-type space $(hE_\Phi^q)_t(\rn)$, we
first recall the notion of ball quasi-Banach
function spaces defined in \cite[Definition 2.1]{ykds}. In what follows,
denote by the \emph{symbol} $\mathbb{M}(\rn)$ the set of
all measurable functions on $\rn$.

\begin{definition}\label{ball}
A quasi-Banach space $X\subset\mathbb{M}(\rn)$ is called a \emph{ball quasi-Banach function space} on $\rn$
if it satisfies
\begin{enumerate}
\item[{\rm(i)}] $\|f\|_{X}=0$ implies that $f=0$ almost everywhere;

\item[{\rm(ii)}] $|g|\le|f|$ almost everywhere implies that $\|g\|_{X}\le\|f\|_{X}$;
\item[{\rm(iii)}] $0\le f_m\uparrow f$ almost everywhere on $\rn$
implies that $\|f_m\|_{X}\uparrow\|f\|_{X}$;

\item[{\rm(iv)}] $B\in\mathbb{B}$ implies that $\mathbf1_B\in X$, where
$$\mathbb{B}:=\{B(x,r):\ x\in\rn\ \ \mbox{and}\ \ r\in(0,\infty)\}.$$
\end{enumerate}
\end{definition}

Observe that, in the above definition, $\mathbb{B}$ can be replaced by the set of all bounded measurable
sets of $\rn$.

\begin{definition}\label{convex}
Let $X$ be a ball quasi-Banach function space and $p\in(0,\infty)$.
\begin{enumerate}
\item[{\rm(i)}] The \emph{$p$-convexification} $X^p$ of $X$ is defined by setting
$X^p:=\{f\in\mathbb{M}(\rn):\ |f|^p\in X\}$
equipped with the quasi-norm $\|f\|_{X^p}:=\||f|^p\|_{X}^{\frac{1}{p}}$ for any $f\in X^p$.

\item[{\rm(ii)}] The space $X$ is said to be \emph{p-convex} if there exists a positive constant $C$ such that,
for any $\{f_j\}_{j\in\mathbb{N}}\subset X^{\frac{1}{p}}$,
$$
\lf\|\sum_{j=1}^{\infty}\lf|f_j\r|\r\|_{X^{\frac{1}{p}}}\le C\sum_{j=1}^{\infty}\lf\|f_j\r\|_{X^{\frac{1}{p}}}.
$$
In particular, when $C=1$, $X$ is said to be \emph{strictly p-convex}.
\end{enumerate}
\end{definition}

Recall that Sawano et al. \cite{ykds} developed a real-variable
theory of local Hardy spaces associated with ball quasi-Banach function spaces.
More function spaces based on ball quasi-Banach function spaces can be found in \cite{s,WYYZ,ZWYY}.
Lemma \ref{ballproof} below shows that
Orlicz-slice spaces are ball quasi-Banach function spaces,
which further
implies that the local Orlicz-slice Hardy space is a special case of the
local Hardy-type space, associated with the ball quasi-Banach function space,
considered in \cite{ykds}.
To establish the finite atomic characterization of local Orlicz-slice Hardy spaces,
we still need the following three lemmas, which are just, respectively,
\cite[Lemmas 4.2, 4.3 and 4.4]{zyyw}.

\begin{lemma}\label{ballproof}
Let $t,\ q\in(0,\infty)$ and $\Phi$ be an Orlicz function with positive lower type $p_{\Phi}^-$ and positive upper type $p_{\Phi}^+$.
Then $(E_\Phi^q)_t(\rn)$ is a ball quasi-Banach function space.
\end{lemma}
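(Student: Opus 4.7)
The plan is to verify, one by one, the four axioms of Definition \ref{ball} for the Orlicz-slice space $(E_\Phi^q)_t(\rn)$, by lifting the corresponding properties of the underlying Orlicz space $L^\Phi(\rn)$ pointwise in the outer integration variable $x$. The quasi-Banach structure itself is already recorded in \cite[Remark 2.10(i)]{zyyw}, so only (i)--(iv) remain. The essential observation is that the denominator $\|\mathbf1_{B(x,t)}\|_{L^\Phi(\rn)}$ is, by translation invariance of Lebesgue measure, a strictly positive constant $c_t$ depending only on $t$ and $\Phi$, which turns $f\mapsto\|f\mathbf1_{B(\cdot,t)}\|_{L^\Phi(\rn)}$ into a well-behaved (sub-linear) map from $\mathbb{M}(\rn)$ into the nonnegative measurable functions on $\rn$ to which the outer $L^q$-machinery applies.

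For (i), I would note that $\|f\|_{(E_\Phi^q)_t(\rn)}=0$ forces the nonnegative integrand defining the outer $L^q$-norm to vanish for almost every $x\in\rn$, so $\|f\mathbf1_{B(x,t)}\|_{L^\Phi(\rn)}=0$ for almost every $x$; because $L^\Phi(\rn)$ itself satisfies the analogous axiom, this gives $f\mathbf1_{B(x,t)}=0$ almost everywhere, and covering $\rn$ by countably many such balls yields $f=0$ almost everywhere. For (ii), if $|g|\le|f|$ almost everywhere then $|g\mathbf1_{B(x,t)}|\le|f\mathbf1_{B(x,t)}|$ for every $x$, so monotonicity of the Orlicz norm gives $\|g\mathbf1_{B(x,t)}\|_{L^\Phi(\rn)}\le\|f\mathbf1_{B(x,t)}\|_{L^\Phi(\rn)}$, and raising to the $q$-th power, dividing by $c_t^q$, and integrating over $x$ preserves the inequality. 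For (iii), given $0\le f_m\uparrow f$ almost everywhere one has $0\le f_m\mathbf1_{B(x,t)}\uparrow f\mathbf1_{B(x,t)}$ almost everywhere, and the Fatou property of $L^\Phi(\rn)$ then yields $\|f_m\mathbf1_{B(x,t)}\|_{L^\Phi(\rn)}\uparrow\|f\mathbf1_{B(x,t)}\|_{L^\Phi(\rn)}$ for each $x$; a second application of the monotone convergence theorem to the outer integral delivers $\|f_m\|_{(E_\Phi^q)_t(\rn)}\uparrow\|f\|_{(E_\Phi^q)_t(\rn)}$.

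The genuinely new step is (iv). For a ball $B:=B(x_0,r)$, I would exploit that $\mathbf1_B\mathbf1_{B(x,t)}=\mathbf1_{B\cap B(x,t)}$ vanishes identically when $|x-x_0|\ge r+t$, while for $|x-x_0|<r+t$ the numerator is bounded above by $\|\mathbf1_B\|_{L^\Phi(\rn)}$, which is finite since $B$ is bounded and $\Phi$ has positive lower and upper types. Combining these observations yields
$$
\|\mathbf1_B\|_{(E_\Phi^q)_t(\rn)}\le c_t^{-1}\,\|\mathbf1_B\|_{L^\Phi(\rn)}\,|B(x_0,r+t)|^{1/q}<\infty,
$$
which completes the verification of (iv).

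The only point requiring a little care, rather than a genuine obstacle, is confirming measurability of the map $x\mapsto\|f\mathbf1_{B(x,t)}\|_{L^\Phi(\rn)}$, so that the outer integral in Definition \ref{d2} is meaningful; this can be handled by a standard approximation of $f$ by truncated simple functions together with the lower semicontinuity of the Orlicz norm, after which each of (i)--(iv) reduces cleanly to pointwise-in-$x$ application of the corresponding property of $L^\Phi(\rn)$.
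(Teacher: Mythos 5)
Your verification is correct, but note that the paper itself does not prove this lemma at all: it simply quotes it as \cite[Lemma 4.2]{zyyw}, so your argument is a self-contained substitute for that citation rather than a parallel to an in-paper proof. Your route is sound: the decisive simplification is the observation that, by translation invariance of Lebesgue measure, $\|\mathbf1_{B(x,t)}\|_{L^\Phi(\rn)}$ is an $x$-independent positive constant $c_t$ (indeed $\int_{\rn}\Phi(\mathbf1_{B(x,t)}(y)/\lambda)\,dy=\Phi(1/\lambda)\,|B(\vec 0_n,t)|$), after which (i) and (ii) reduce to the corresponding properties of the Luxemburg quasi-norm applied pointwise in $x$, (iii) follows from the Fatou property of $L^\Phi(\rn)$ together with the monotone convergence theorem in the outer variable, and (iv) is the elementary bound $\|\mathbf1_B\|_{(E_\Phi^q)_t(\rn)}\le c_t^{-1}\|\mathbf1_B\|_{L^\Phi(\rn)}|B(x_0,r+t)|^{1/q}$, with $\|\mathbf1_B\|_{L^\Phi(\rn)}<\infty$ because the positive lower type of $\Phi$ forces $\Phi(1/\lambda)|B|\le1$ for $\lambda$ large. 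Two small points deserve a word if you write this up in full: the Fatou property of the Luxemburg functional holds for any nondecreasing $\Phi$ (given $\lambda>\lim_m\|f_m\|_{L^\Phi(\rn)}$ one gets $\int_{\rn}\Phi_-(|f|/\lambda)\le1$ with $\Phi_-$ the left-continuous minorant, and then $\int_{\rn}\Phi(|f|/\lambda')\le1$ for every $\lambda'>\lambda$, so no continuity of $\Phi$ need be assumed), and the measurability of $x\mapsto\|f\mathbf1_{B(x,t)}\|_{L^\Phi(\rn)}$, which you correctly flag, can be settled by noting that $(x,\lambda)\mapsto\int_{\rn}\Phi(|f(y)|\mathbf1_{B(x,t)}(y)/\lambda)\,dy$ is jointly measurable (Tonelli) and the Luxemburg norm is the infimum over rational $\lambda$ of measurable conditions. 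With these remarks your argument is complete and, in spirit, it is exactly the kind of axiom-by-axiom lifting from $L^\Phi(\rn)$ that the cited source carries out.
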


\begin{lemma}\label{sconvex}
Let $t$, $q\in(0,\infty)$ and $\Phi$ be an Orlicz function with positive lower type
$p_{\Phi}^-$ and positive upper type $p_{\Phi}^+$. Let $\vartheta\in (0,\min\{p^-_{\Phi},q\}]$.
Then $(E_\Phi^q)_t(\rn)$ is a strictly $\vartheta$-convex
ball quasi-Banach function space as in Definition \ref{convex}(ii).
\end{lemma}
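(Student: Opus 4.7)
My plan is to establish strict $\vartheta$-convexity of $(E_\Phi^q)_t(\rn)$ by identifying its $(1/\vartheta)$-convexification with another Orlicz-slice space to which the Banach-space criterion recorded in the remark after Definition \ref{d2} already applies. Setting $\Phi_\vartheta(\tau):=\Phi(\tau^{1/\vartheta})$, a direct change of variables $\lambda\leftrightarrow\lambda^{1/\vartheta}$ inside the Luxemburg infimum gives, for any non-negative measurable $f$ and any measurable $E\subset\rn$,
\begin{equation*}
\|f^{1/\vartheta}\mathbf1_E\|_{L^\Phi(\rn)}=\|f\mathbf1_E\|_{L^{\Phi_\vartheta}(\rn)}^{1/\vartheta}\quad\text{and}\quad\|\mathbf1_E\|_{L^\Phi(\rn)}=\|\mathbf1_E\|_{L^{\Phi_\vartheta}(\rn)}^{1/\vartheta}.
\end{equation*}
Inserting these into Definition \ref{d2} and using $\|f\|_{X^{1/\vartheta}}=\||f|^{1/\vartheta}\|_X^{\vartheta}$ from Definition \ref{convex}(i), I would check the identity
\begin{equation*}
(E_\Phi^q)_t(\rn)^{1/\vartheta}=(E_{\Phi_\vartheta}^{q/\vartheta})_t(\rn)
\end{equation*}
with equal quasi-norms.

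Given this identification, the task reduces to showing that $(E_{\Phi_\vartheta}^{q/\vartheta})_t(\rn)$ is a Banach space. The hypothesis $\vartheta\leq p_\Phi^-$ translates into $\Phi_\vartheta$ having lower type $p_\Phi^-/\vartheta\geq 1$, while its upper type $p_\Phi^+/\vartheta$ remains positive; and $\vartheta\leq q$ gives $q/\vartheta\geq 1$. These are exactly the hypotheses of the Banach-space criterion noted in the remark following Definition \ref{d2} (ultimately \cite[Remark 3.17]{zyyw}), so the norm of $(E_{\Phi_\vartheta}^{q/\vartheta})_t(\rn)$ is sub-additive with constant exactly $1$. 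Transporting this back through the identification yields
\begin{equation*}
\left\|\sum_{j=1}^{\infty}|f_j|\right\|_{X^{1/\vartheta}}\leq\sum_{j=1}^{\infty}\|f_j\|_{X^{1/\vartheta}}
\end{equation*}
for every $\{f_j\}_{j\in\nn}\subset X^{1/\vartheta}$ with $X:=(E_\Phi^q)_t(\rn)$, which is precisely strict $\vartheta$-convexity. The ball quasi-Banach function space axioms of Definition \ref{ball} are already supplied by Lemma \ref{ballproof}, so combining the two statements closes the proof.

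The main obstacle, as I see it, lies in the first paragraph: confirming that the identification $(E_\Phi^q)_t(\rn)^{1/\vartheta}=(E_{\Phi_\vartheta}^{q/\vartheta})_t(\rn)$ holds with quasi-norms literally equal rather than merely equivalent, because strict $\vartheta$-convexity does not tolerate any multiplicative constant exceeding $1$. Concretely, one must verify that the change of variable inside the defining infimum of the Luxemburg quasi-norm produces no hidden slack, and one may need to adopt the convention (implicit in the ``may always assume'' clause of the remark following Definition \ref{d2}) of replacing $\Phi_\vartheta$ by an equivalent convex Orlicz function so that $(E_{\Phi_\vartheta}^{q/\vartheta})_t(\rn)$ is literally Banach rather than only Banach-equivalent. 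Once these bookkeeping conventions are fixed, the remainder of the argument is mechanical and follows the same scheme as \cite[Lemma 4.3]{zyyw}.
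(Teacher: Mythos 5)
Your argument is correct and is essentially the proof behind the paper's treatment: the paper itself gives no proof of Lemma \ref{sconvex}, quoting it directly from \cite[Lemma 4.3]{zyyw}, whose argument is exactly your convexification identity $[(E_\Phi^q)_t(\rn)]^{1/\vartheta}=(E_{\Phi_\vartheta}^{q/\vartheta})_t(\rn)$ (the same device the paper uses with $\Phi_r$ and $\Phi_u$ in the proofs of Theorems \ref{mdj} and \ref{decomposition}) combined with the Banach-space criterion of \cite[Remark 3.17]{zyyw}. The one delicate point---that the constant $C=1$ in strict $\vartheta$-convexity requires adopting the convention of replacing $\Phi_\vartheta$ by an equivalent convex Orlicz function, i.e.\ it is inherited from the ``may always assume it is a Banach space'' convention---is precisely the issue you flag, and you resolve it in the same way as the cited source.
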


\begin{lemma}\label{Le818}
Let $t,\ q\in(0,\infty)$ and $\Phi$ be an Orlicz function with positive lower type $p_{\Phi}^-$
and positive upper type $p_{\Phi}^+$. Let $r\in(\max\{q,\ p_{\Phi}^+\},\infty]$ and $s\in (0,\min\{p^-_{\Phi},q\})$. Then
there exists a positive constant $C_{(s,r)}$, depending on $s$ and $r$, but independent of $t$,
such that, for any measurable function $f$,
\begin{equation}\label{jdjd}
\lf\|\mathcal{M}^{((r/s)')}(f) \r\|_{([(E_\Phi^q)_t(\rn)]^{1/s})'}\le C_{(s,r)}\lf\|f \r\|_{([(E_\Phi^q)_t(\rn)]^{1/s})'},
\end{equation}
here and thereafter, $[(E_\Phi^q)_t(\rn)]^{1/s}$ denotes the
$\frac1s$-convexification of $(E_\Phi^q)_t(\rn)$ as in Definition \ref{convex}(i)
with $X:=(E_\Phi^q)_t(\rn)$ and $p:=1/s$,
and $([(E_\Phi^q)_t(\rn)]^{1/s})'$ denotes its dual space.
\end{lemma}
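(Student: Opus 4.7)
The plan is to deduce the stated weighted maximal inequality on the associate space from the Fefferman--Stein vector-valued inequality of Lemma \ref{main}, combining a convexification identification with a duality/linearization argument.

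First I would identify the $\frac{1}{s}$-convexification as another Orlicz-slice space. Setting $\widetilde{\Phi}(\tau):=\Phi(\tau^{1/s})$ and using the Luxemburg identity $\||g|^{1/s}\|_{L^\Phi(\rn)}=\|g\|_{L^{\widetilde{\Phi}}(\rn)}^{1/s}$ together with $\|\mathbf{1}_B\|_{L^{\widetilde{\Phi}}(\rn)}=\|\mathbf{1}_B\|_{L^{\Phi}(\rn)}^{s}$, one obtains the isometric identification
$$[(E_\Phi^q)_t(\rn)]^{1/s}=(E_{\widetilde{\Phi}}^{q/s})_t(\rn),$$
where $\widetilde{\Phi}$ has positive lower type $p_\Phi^-/s$ and positive upper type $p_\Phi^+/s$. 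The hypothesis $s\in(0,\min\{p_\Phi^-,q\})$ forces $p_{\widetilde{\Phi}}^->1$ and $q/s>1$, so the convexified space is an honest Banach function space and Lemma \ref{main} remains available on it.

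Next, set $\alpha:=(r/s)'$ and $Z:=([(E_\Phi^q)_t(\rn)]^{1/s})'$. Using the pointwise identity $\mathcal{M}^{((r/s)')}(f)=[\mathcal{M}(|f|^\alpha)]^{1/\alpha}$ together with the power-space formula $\|G\|_{Z}=\||G|^\alpha\|_{Z^{1/\alpha}}^{1/\alpha}$, the desired estimate \eqref{jdjd} is equivalent to the scalar maximal bound $\|\mathcal{M}(F)\|_{Z^{1/\alpha}}\le C\|F\|_{Z^{1/\alpha}}$ for all $F\ge0$, with a constant independent of $t$. The virtue of this reduction is that $\mathcal{M}^{((r/s)')}$ has been replaced by the ordinary (non-fractional) Hardy--Littlewood operator, albeit on a different space.

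The final and main step is to establish this scalar bound via a duality pairing against the predual of $Z^{1/\alpha}$, combined with Lemma \ref{main}. Concretely, I would linearize $\mathcal{M}$ by choosing, for each $x\in\rn$, a ball $B_x\ni x$ (depending measurably on $x$) with $\fint_{B_x}F\ge\frac{1}{2}\mathcal{M}(F)(x)$, exchange the order of integration via Fubini, and then apply H\"older's inequality in a vector-valued $\ell^{r/s}$--$\ell^{(r/s)'}$ pairing followed by the Fefferman--Stein inequality (Lemma \ref{main}) at the exponent $u:=r/s>1$. The conditions $r>\max\{q,p_\Phi^+\}$ and $s<\min\{p_\Phi^-,q\}$ together guarantee that Lemma \ref{main} applies at this exponent to the convexification $(E_{\widetilde{\Phi}}^{q/s})_t(\rn)$, and its $t$-uniform constant is inherited by the final estimate.

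The main obstacle lies in this last step: since $\mathcal{M}$ is sublinear rather than linear, there is no literal adjoint acting on $Z$, so one must replace the missing adjoint by a measurable linearization of the maximal operator and then invoke the vector-valued Fefferman--Stein inequality to transfer the estimate across the duality pairing. Tracking the $t$-independence of all constants through the linearization and the ensuing vector-valued estimates relies critically on the $t$-uniform constant already recorded in Lemma \ref{main}, which is the key ingredient making the entire argument work uniformly in $t$.
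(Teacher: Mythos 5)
First, a remark on the reference point: the paper does not actually prove Lemma \ref{Le818}; it is quoted verbatim as \cite[Lemma 4.4]{zyyw}, whose proof rests on an explicit identification of the associate space of the convexified Orlicz-slice space. Your first two steps are fine and match that framework: the identification $[(E_\Phi^q)_t(\rn)]^{1/s}=(E_{\Phi_s}^{q/s})_t(\rn)$ with $\Phi_s(\tau):=\Phi(\tau^{1/s})$ (lower type $p_\Phi^-/s>1$, upper type $p_\Phi^+/s$, exponent $q/s>1$) is a routine Luxemburg-norm computation, and rewriting \eqref{jdjd} as a bound for the ordinary $\mathcal{M}$ on a power of the associate space is a harmless change of variables.

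The genuine gap is your third step, which is where all the content lies. The linearize-and-Fubini transfer you describe, in its natural reading, amounts to the endpoint inequality $\int_{\rn}\mathcal{M}(F)(x)\,g(x)\,dx\lesssim\int_{\rn}F(y)\,\mathcal{M}(g)(y)\,dy$, and this is false in general (in dimension one take $F:=\epsilon^{-1}\mathbf{1}_{[0,\epsilon]}$ and $g:=\mathbf{1}_{[0,1]}$: the left-hand side grows like $\log(1/\epsilon)$ while the right-hand side stays bounded); the correct Fefferman--Stein weighted inequality needs an exponent strictly greater than $1$, and nothing in your sketch supplies it. The invoked ``vector-valued $\ell^{r/s}$--$\ell^{(r/s)'}$ pairing followed by Lemma \ref{main}'' is not attached to any actual sequence of functions or decomposition, so it cannot repair this. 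Moreover, the duality you want to run takes place on $Z^{1/\alpha}$ with $Z:=([(E_\Phi^q)_t(\rn)]^{1/s})'$ and $\alpha:=(r/s)'>1$, which is a concavification and a priori only a quasi-normed lattice; that its Köthe dual norms it (equivalently, that $Z$ is $\alpha$-convex) is not free and in effect already requires knowing what $Z$ is. This is precisely the missing ingredient: one must identify $([(E_\Phi^q)_t(\rn)]^{1/s})'=((E_{\Phi_s}^{q/s})_t(\rn))'$ concretely as (an) Orlicz-slice space built on the complementary function of $\Phi_s$ with outer exponent $(q/s)'$, uniformly in $t$, and then apply the boundedness of the Hardy--Littlewood maximal operator on Orlicz-slice spaces whose exponent and lower type strictly exceed $(r/s)'$; the hypotheses $r>\max\{q,p_\Phi^+\}$ and $s<\min\{p_\Phi^-,q\}$ enter exactly through $(r/s)'<\min\{(q/s)',(p_\Phi^+/s)'\}$. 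Without the dual-space identification (or a bona fide theorem transferring boundedness of $\mathcal{M}$ from a space to its associate space, which you do not cite and which is not available in this generality), the proposal does not constitute a proof.
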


Applying Lemmas \ref{main}, \ref{ballproof} and \ref{sconvex}, and  \cite[Theorem 5.3]{ykds},
we obtain the following maximal function
characterizations of the local Orlicz-slice Hardy space $(hE_\Phi^q)_t(\rn)$.

\begin{theorem}\label{mdj}
Let $a,\,b,\,t,\,q\in(0,\infty)$. Let $\Phi$ be an Orlicz function with positive lower
type $p_{\Phi}^-$ and positive upper type $p_{\Phi}^+$. Let $\varphi\in\mathcal{S}(\rn)$ satisfy
$\int_{\rn}\varphi(x)dx\neq0.$
\begin{enumerate}
\item[{\rm(i)}] Let $N\geq\lfloor b+1\rfloor$ be an integer. Then, for any $f\in\mathcal{S}'(\rn)$,
$$
\|m(f,\varphi)\|_{(E_\Phi^q)_t(\rn)}\lesssim\|m_a^*(f,\varphi)\|_{(E_\Phi^q)_t(\rn)}
\lesssim\|m_b^{**}(f,\varphi)\|_{(E_\Phi^q)_t(\rn)},
$$
$$
\|m(f,\varphi)\|_{(E_\Phi^q)_t(\rn)}\lesssim\|m_N(f)\|_{(E_\Phi^q)_t(\rn)}
\lesssim\|m_{\lfloor b+1 \rfloor}(f)\|_{(E_\Phi^q)_t(\rn)}\lesssim\|m_b^{**}(f,\varphi)\|_{(E_\Phi^q)_t(\rn)}
$$
and
$$
\|m_b^{**}(f,\varphi)\|_{(E_\Phi^q)_t(\rn)}\sim\|m_{b,\,N}^{**}(f)\|_{(E_\Phi^q)_t(\rn)},
$$
where the implicit positive constants are independent of $f$ and $t$.
\item[{\rm(ii)}] Let $b\in(\frac{2n}{\min\{p_\Phi^-,q\}},\infty)$.
Then, for any $f\in\mathcal{S}'(\rn)$,
$$
\|m_{b,\,N}^{**}(f)\|_{(E_\Phi^q)_t(\rn)}\lesssim\|m(f,\varphi)\|_{(E_\Phi^q)_t(\rn)},
$$
where the implicit positive constant is independent of $f$ and $t$. In particular, when $N\in\mathbb{N}\bigcap[\lfloor b+1\rfloor,\infty)$,
and one of the quantities
$$
\|m(f,\varphi)\|_{(E_\Phi^q)_t(\rn)},\, \|m_a^*(f,\varphi)\|_{(E_\Phi^q)_t(\rn)},\,\|m_N(f)\|_{(E_\Phi^q)_t(\rn)},
$$
$$
\|m_b^{**}(f,\varphi)\|_{(E_\Phi^q)_t(\rn)}\ \ \text{and}\ \
\|m_{b,\,N}^{**}(f)\|_{(E_\Phi^q)_t(\rn)},
$$
is finite, then the other quantities are also finite and mutually equivalent with the positive
equivalence constants
independent of $f$ and $t$.
\end{enumerate}
\end{theorem}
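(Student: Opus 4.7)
My plan is to identify $(hE_\Phi^q)_t(\rn)$ as an instance of the abstract local Hardy space $h_X(\rn)$ built on a ball quasi-Banach function space $X$ in \cite{ykds}, taking $X := (E_\Phi^q)_t(\rn)$, and then to deduce both (i) and (ii) from a single appeal to \cite[Theorem 5.3]{ykds}. The three hypotheses of that theorem are precisely what the preceding three lemmas supply: Lemma \ref{ballproof} shows $X$ is a ball quasi-Banach function space in the sense of Definition \ref{ball}; Lemma \ref{sconvex} certifies its strict $\vartheta$-convexity for every $\vartheta\in(0,\min\{p_\Phi^-,q\}]$; and Lemma \ref{main} supplies the Fefferman--Stein type inequality on $X$ that is the analytic input.

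For part (i), I would first record the elementary pointwise bounds
\[
m(f,\varphi)(x)\le m_a^*(f,\varphi)(x)\le (1+a)^b\,m_b^{**}(f,\varphi)(x)\qquad\text{and}\qquad m(f,\varphi)(x)\lesssim m_N(f)(x),
\]
together with the comparison $m_N(f)\le m_{\lfloor b+1\rfloor}(f)$ whenever $N\ge\lfloor b+1\rfloor$, which holds because $\cf_{\lfloor b+1\rfloor}(\rn)\subset\cf_N(\rn)$ in that range. Each of these is immediate from Definition \ref{jdhs}: either the supremum is restricted to a smaller parameter set, or the Peetre weight $(1+\tau^{-1}|y|)^{-b}$ is absorbed into a uniform constant on the cone $|y-x|<a\tau$. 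Monotonicity of the quasi-norm (Definition \ref{ball}(ii)) then promotes each pointwise inequality to the required norm inequality. The equivalence $\|m_b^{**}(f,\varphi)\|_{(E_\Phi^q)_t(\rn)}\sim\|m_{b,N}^{**}(f)\|_{(E_\Phi^q)_t(\rn)}$ follows by the same scheme, after normalizing $\varphi$ so that $\varphi/C\in\cf_N(\rn)$.

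The substantive content sits in part (ii). Here I would invoke the classical Peetre--Rychkov--Triebel pointwise majorization
\[
\bigl[m_{b,N}^{**}(f)(x)\bigr]^s\lesssim \mathcal{M}\bigl([m(f,\varphi)]^s\bigr)(x),\qquad x\in\rn,
\]
which is valid whenever $s>0$ satisfies $bs>n$ and which is reproduced inside the proof of \cite[Theorem 5.3]{ykds}. The hypothesis $b>2n/\min\{p_\Phi^-,q\}$ is what permits the choice $s\in(0,\min\{p_\Phi^-,q\})$ with $bs>n$, which in turn guarantees convergence of the integrals underlying the majorization. Raising to the power $1/s$ and applying Lemma \ref{main} on the $1/s$-convexification $[(E_\Phi^q)_t(\rn)]^{1/s}$ (whose structure is guaranteed by Lemma \ref{sconvex}) yields
\[
\|m_{b,N}^{**}(f)\|_{(E_\Phi^q)_t(\rn)}\lesssim\|m(f,\varphi)\|_{(E_\Phi^q)_t(\rn)}.
\]
Combining with (i) gives the stated "any finite implies all finite and mutually equivalent" conclusion.

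The main obstacle is thus the simultaneous calibration of $b$, $s$, and the convexity exponent $\vartheta$: one needs a single $s$ small enough for the Peetre majorization to converge ($bs>n$) yet also lying strictly below $\min\{p_\Phi^-,q\}$ so that Lemma \ref{main} applies on the $1/s$-convexification. This coupling is exactly why the threshold $b>2n/\min\{p_\Phi^-,q\}$ appears in the hypothesis of (ii), and it is also what forces the "sufficiently large $b$" qualifier in Definition \ref{dh}. Once this calibration is carried out, the theorem reduces to a direct instantiation of \cite[Theorem 5.3]{ykds} with $X:=(E_\Phi^q)_t(\rn)$.
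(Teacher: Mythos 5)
Your umbrella strategy coincides with the paper's: identify $(E_\Phi^q)_t(\rn)$ as a ball quasi-Banach function space (Lemma \ref{ballproof}), record its strict convexity (Lemma \ref{sconvex}), and quote \cite[Theorem 5.3]{ykds}. For part (i) that citation is indeed all that is needed, and it is what the paper does. However, two of the inequalities you declare ``immediate from Definition \ref{jdhs}'' are not: the bound $\|m_{\lfloor b+1\rfloor}(f)\|_{(E_\Phi^q)_t(\rn)}\lesssim\|m_b^{**}(f,\varphi)\|_{(E_\Phi^q)_t(\rn)}$ and the nontrivial half of $\|m_b^{**}(f,\varphi)\|_{(E_\Phi^q)_t(\rn)}\sim\|m_{b,N}^{**}(f)\|_{(E_\Phi^q)_t(\rn)}$ compare a supremum over the whole class $\mathcal{F}_N(\rn)$ (or $\mathcal{F}_{\lfloor b+1\rfloor}(\rn)$) with a single fixed $\varphi$, and they require a Calder\'on-type reproducing formula exploiting $\int_{\rn}\varphi(x)\,dx\neq0$; this is exactly why the paper cites \cite[Theorem 5.3(i)]{ykds} rather than arguing by restriction of suprema. (Also your inclusion is reversed: for $N\ge\lfloor b+1\rfloor$ one has $\mathcal{F}_N(\rn)\subset\mathcal{F}_{\lfloor b+1\rfloor}(\rn)$, which is what yields $m_N(f)\le m_{\lfloor b+1\rfloor}(f)$.)

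The genuine gap is in part (ii). \cite[Theorem 5.3(ii)]{ykds} is not applicable on the strength of Lemmas \ref{ballproof}, \ref{sconvex} and \ref{main} alone: it carries an extra hypothesis, namely the translation-average estimate \eqref{q}, that is, $\|\{\int_{z+[0,1]^n}|f(\cdot-y)|^r\,dy\}^{1/r}\|_{(E_\Phi^q)_t(\rn)}\lesssim(1+|z|)^A\|f\|_{(E_\Phi^q)_t(\rn)}$ with $(b-A)r>n$, and verifying this estimate [via $\int_{z+[0,1]^n}|f(x-y)|^r\,dy\lesssim\mathcal{M}([(1+|z|)^Af]^r)(x)$, the identification of the $r$-th power of the norm with the space $(E_{\Phi_r}^{q/r})_t(\rn)$, and Lemma \ref{main}] is the entire content of the paper's proof of (ii); it is also where the threshold $b>\frac{2n}{\min\{p_\Phi^-,q\}}$ really comes from (one takes $A=n/r$, so $(b-A)r>n$ forces $br>2n$), not from the bare requirement $bs>n$. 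Your substitute, the pointwise majorization $[m_{b,N}^{**}(f)]^s\lesssim\mathcal{M}([m(f,\varphi)]^s)$ for $bs>n$, is not the hypothesis of the theorem you invoke and is itself nontrivial for the local grand Peetre maximal function: it again needs the reproducing formula to pass from arbitrary $\psi\in\mathcal{F}_N(\rn)$ to the fixed $\varphi$, together with an a priori finiteness/bootstrap argument, and you assert it without proof while attributing it to a source whose argument is organized around \eqref{q} instead. If you supplied a proof or a precise reference for that pointwise estimate, your route (majorization plus Lemma \ref{main} on the $1/s$-convexification, legitimate since $p_\Phi^-/s,\,q/s>1$) would indeed give (ii), even under the weaker assumption $b>\frac{n}{\min\{p_\Phi^-,q\}}$; as written, however, the key step is missing and the actual hypothesis of \cite[Theorem 5.3(ii)]{ykds} is never checked.
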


\begin{proof}
From Lemma \ref{ballproof}, we deduce that $(E_\Phi^q)_t(\rn)$ is a ball quasi-Banach function space.
Then (i) can be deduced directly from \cite[Theorem 5.3(i)]{ykds}.

For (ii), since $b\in(\frac{2n}{\min\{p_\Phi^-,q\}},\infty)$,
we may assume that there exists an
$r\in(0,\min\{p_\Phi^-,q\})$ such that $b>\frac{2n}{r}>\frac{2n}{\min\{p_\Phi^-,q\}}$
and let $A:=\frac{n}{r}$.
From this and Lemma \ref{sconvex}, we deduce that $(E_\Phi^q)_t(\rn)$ is strictly $r$-convex and $(b-A)r>n$.
Then, in order to prove (ii), by \cite[Theorem 5.3(ii)]{ykds}, we only need to show that, for any $f\in(E_\Phi^q)_t(\rn)$ and any $z\in\rn$,
\begin{equation}\label{q}
\lf\|\lf\{\int_{z+[0,1]^n}|f(\cdot-y)|^r\,dy\r\}
^{\frac{1}{r}}\r\|_{(E_\Phi^q)_t(\rn)}\lesssim(1+|z|)^A\|f\|_{(E_\Phi^q)_t(\rn)}.
\end{equation}
Let $\Phi_r(\tau):=\Phi(\sqrt[r]{\tau})$ for any $\tau\in[0,\infty)$. Then $\Phi_r$ is of upper type
$\frac{p_{\Phi}^+}{r}$ and of lower type $\frac{p_{\Phi}^-}{r}$, and $\frac{p_{\Phi}^-}{r}$, $\frac{q}{r}\in(1,\infty)$. Fix $z\in\rn$.
Observing that, for any $x\in\rn$,
$$
\int_{z+[0,1]^n}|f(x-y)|^r\,dy\leq
\int_{B(\vec{0}_n,|z|+\sqrt{n})}|f(x-y)|^r\,dy\lesssim\mathcal{M}([(1+|z|)^Af]^r)(x),
$$
we then have
\begin{align*}
\lf\| \lf\{ \int_{z+[0,1]^n} |f(\cdot-y)|^r \,dy \r\}^{\frac{1}{r}} \r\|_{(E_\Phi^q)_t(\rn)}
&=\lf\|\int_{z+[0,1]^n}|f(\cdot-y)|^r\,dy\r\|_{(E_{\Phi_r}^{q/r})_t(\rn)}^{\frac{1}{r}}\\
&\lesssim\lf\|\mathcal{M}([(1+|z|)^Af]^r)\r\|_{(E_{\Phi_r}^{q/r})_t(\rn)}^{\frac{1}{r}}.
\end{align*}
From this and Lemma \ref{main}, it follows that
\begin{align*}
\lf\| \lf\{ \int_{z+[0,1]^n} |f(\cdot-y)|^r \,dy \r\}^{\frac{1}{r}} \r\|_{(E_\Phi^q)_t(\rn)}
&\lesssim\lf\|\mathcal{M}([(1+|z|)^Af]^r)\r\|_{(E_{\Phi_r}^{q/r})_t(\rn)}^{\frac{1}{r}}\\
&\lesssim\lf\|[(1+|z|)^Af]^r\r\|_{(E_{\Phi_r}^{q/r})_t(\rn)}^{\frac{1}{r}}
=(1+|z|)^A\lf\| f \r\|_{(E_\Phi^q)_t(\rn)},
\end{align*}
which implies that \eqref{q} holds true and hence completes the proof of Theorem \ref{mdj}.
\end{proof}

\begin{remark}\label{214x}
By Theorem \ref{mdj}, we know that, in Definition \ref{dh}, $b$ should be large enough such that
$b\in(\frac{2n}{\min\{p_{\Phi}^-,q\}},\infty)$.
\end{remark}

\subsection{Atomic and finite atomic characterizations\label{s3.4} of $(hE_\Phi^q)_t(\rn)$}

In this subsection, we establish atomic characterizations and finite atomic characterizations
of $(hE_\Phi^q)_t(\rn)$.

In what follows, for any $L\in\mathbb{Z}_+$, the \emph{symbol}
$\mathcal{P}_L(\rn)$ denotes the set of all polynomials on $\rn$ with degree
not greater than $L$. For any $a\in L^1(\rn)$ satisfying
$$
\int_{\rn}(1+|x|)^L|a(x)|\,dx<\infty,
$$
we write $a\bot\mathcal{P}_L(\rn)$ if
$$
\int_{\rn}a(x)x^\alpha\,dx=0
$$
for any $\alpha\in\mathbb{Z}_+^n$ with $|\alpha|\leq L$.

\begin{definition}
Let $t,\ q\in(0,\infty)$, $r\in[1,\infty]$ and $d\in\mathbb{Z}_+$.
Let $\Phi$ be an Orlicz function with positive lower type $p_{\Phi}^-$ and positive upper type $p_{\Phi}^+$.
The function $a$ is called a \emph{local $((E_\Phi^q)_t(\rn),\ r,\ d)$-atom}
if there exists a cube $Q\in\mathcal{Q}$ with side length $\ell(Q)$ such that $\supp(a)\subset Q$,
$$
\|a\|_{L^r(\rn)}\le\frac{|Q|^{\frac{1}{r }}}{\|\mathbf1_Q\|_{(E_\Phi^q)_t(\rn)}},
$$
and $a\bot\mathcal{P}_d(\rn)$ when $\ell(Q)<1$.
\end{definition}

\begin{definition}\label{atomic hardy}
Let $t,\ q\in(0,\infty)$ and $\Phi$ be an Orlicz function with positive lower type
$p_{\Phi}^-$ and positive upper type $p_{\Phi}^+$. Let $r\in(\max\{1,q,\ p_{\Phi}^+\},\infty]$,
$s \in(0,\min\{p^-_{\Phi},q,1\})$ and $d\in\mathbb{Z}_+$ satisfy $d\ge\lfloor n(\frac{1}{s}-1)\rfloor$.
The \emph{atomic local Orlicz-slice Hardy space $(hE_\Phi^q)_t^{r,d}(\rn)$}
 is defined to be the set of all $f \in\mathcal{S}'(\rn)$ satisfying
that there exist a sequence $\{a_j\}_{j=1}^\infty$ of local $((E_\Phi^q)_t(\rn),\ r,\ d)$-atoms supported,
respectively, in cubes
$\{Q_j\}_{j=1}^\infty\subset\mathcal{Q}$ and a sequence $\{\lambda_j\}_{j=1}^\infty\subset[0,\infty)$
such that
\begin{equation*}
f=\sum_{j=1}^{\infty}\lambda_j a_j\ \ \mathrm{in}\ \mathcal{S}'(\rn)
\end{equation*}
and
$$
\|f\|_{(hE_\Phi^q)_t^{r,d}(\rn)}:=\inf
\lf\|\lf\{\sum_{j=1}^{\infty} \lf[\frac{\lambda_j}
{\|\mathbf1_{Q_j}\|_{(E_\Phi^q)_t(\rn)}} \r]^s\mathbf1_{Q_j} \r\}
^{\frac{1}{s}}\r\|_{(E_\Phi^q)_t(\rn)}<\infty,
$$
where the infimum is taken over all the decompositions of $f$ as above.
\end{definition}

Using Lemmas \ref{ballproof}, \ref{sconvex} and \ref{Le818},
and \cite[Theorem 4.8]{wyy19},
we immediately obtain the following atomic characterization of $(hE_\Phi^q)_t(\rn)$.

\begin{theorem}\label{atom ch}
Let all the assumptions be as in Definition \ref{atomic hardy}.
Then $(hE_\Phi^q)_t(\rn)=(hE_\Phi^q)_t^{r,d}(\rn)$
with equivalent quasi-norms.
\end{theorem}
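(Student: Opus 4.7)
The plan is to deduce Theorem \ref{atom ch} directly from the general atomic characterization of local Hardy-type spaces associated with a ball quasi-Banach function space, namely \cite[Theorem 4.8]{wyy19}. That theorem yields an identification $hX(\rn)=hX^{r,d}(\rn)$ with equivalent quasi-norms as soon as the underlying space $X$, together with the parameters $r$, $s$, $d$, meets a short list of structural assumptions. My task is therefore reduced to verifying each of these assumptions in the concrete setting $X:=(E_\Phi^q)_t(\rn)$ with the parameters prescribed in Definition \ref{atomic hardy}.

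First I would record the checklist extracted from \cite[Theorem 4.8]{wyy19}: $(\mathrm{H}_1)$ $X$ is a ball quasi-Banach function space; $(\mathrm{H}_2)$ $X$ is strictly $s$-convex for some $s\in(0,\min\{1,p_\Phi^-,q\})$; $(\mathrm{H}_3)$ $d\in\mathbb{Z}_+$ satisfies $d\ge\lfloor n(1/s-1)\rfloor$; $(\mathrm{H}_4)$ there exists $r\in(\max\{1,q,p_\Phi^+\},\infty]$ such that the iterated maximal operator $\mathcal{M}^{((r/s)')}$ is bounded on the associate space $([(E_\Phi^q)_t(\rn)]^{1/s})'$; and $(\mathrm{H}_5)$ the Peetre-type aperture parameter $b$ in Definition \ref{dh} is chosen large enough. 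Condition $(\mathrm{H}_1)$ is Lemma \ref{ballproof}; $(\mathrm{H}_2)$ is Lemma \ref{sconvex}; $(\mathrm{H}_3)$ is built into Definition \ref{atomic hardy}; and $(\mathrm{H}_5)$ is guaranteed by Remark \ref{214x}. The genuinely nontrivial verification is $(\mathrm{H}_4)$, which is precisely the content of Lemma \ref{Le818}; its statement has been tailored to exactly the parameter ranges imposed in Definition \ref{atomic hardy}, so no further work is needed.

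Once these checks are in place, \cite[Theorem 4.8]{wyy19} immediately produces both halves of the claimed equivalence. The easier inclusion $(hE_\Phi^q)_t^{r,d}(\rn)\hookrightarrow (hE_\Phi^q)_t(\rn)$ reduces to an atom-by-atom Peetre grand-maximal estimate combined with the vector-valued Fefferman--Stein inequality (Lemma \ref{main}) used to sum the atomic pieces in the quasi-norm of $(E_\Phi^q)_t(\rn)$; the converse inclusion carries the analytic weight and is delivered by a local Calder\'on--Zygmund decomposition performed at the level sets of the local Peetre grand maximal function of $f$, where strict $s$-convexity and the maximal-operator bound on the dual convexification are exactly what allow one to package the resulting building blocks into local $((E_\Phi^q)_t(\rn),r,d)$-atoms. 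The step I anticipate as the main obstacle, were one to unpack \cite[Theorem 4.8]{wyy19} by hand, is exactly $(\mathrm{H}_4)$ — a genuine off-diagonal bound for a power Hardy--Littlewood maximal operator on the dual of a convexification — but this has already been isolated as Lemma \ref{Le818} and is at our disposal, so in the present application the argument is essentially bookkeeping.
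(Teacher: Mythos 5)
Your proposal is correct and follows exactly the paper's route: the paper obtains Theorem \ref{atom ch} precisely by invoking \cite[Theorem 4.8]{wyy19} for the ball quasi-Banach function space $X:=(E_\Phi^q)_t(\rn)$, with the needed structural hypotheses supplied by Lemmas \ref{ballproof}, \ref{sconvex} and \ref{Le818}. Your itemized verification (including the role of Remark \ref{214x} for the Peetre parameter $b$) is just a more explicit writing-out of that same one-line argument.
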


Let $\varphi\in\mathcal{S}(\rn)$ satisfy
$\supp\varphi\subset B(\vec{0_n},1)$
and $\int_{\rn}\varphi(x)\,dx\neq0$. For any $f\in\mathcal{S}'(\rn)$ and $x\in\rn$,
we always let
$$
m_0(f)(x):=m(f,\varphi)(x),
$$
where $m(f,\varphi)(x)$ is as in Definition \ref{jdhs}(i).

Repeating an
argument similar to that used in \cite[Section 4]{t}, we immediately obtain
the following Calder\'on--Zygmund decompositions and we omit the details here.

\begin{lemma}\label{czd}
Let $t,\ q\in(0,\infty)$ and $\Phi$ be an Orlicz function with positive
lower type $p_{\Phi}^-$ and positive upper type $p_{\Phi}^+$.
Assume that $f\in(hE_\Phi^q)_t(\rn)$ and $d\in\mathbb{Z}_+$
satisfies $d\ge\lfloor n(\frac{1}{\min\{p^-_{\Phi},q\}}-1)\rfloor$. For any $j\in\zz$, let
$$
\mathcal{O}_j:=\{y\in\rn:\ m_N(f)(y)>2^j\},
$$
where $m_N(f)$ is as in Definition \ref{jdhs} with $N\in\nn$ as in Theorem \ref{mdj}.
Then the following statements hold true.
\begin{enumerate}
\item[{\rm(i)}]For any $j\in\zz$, there exists a sequence $\{Q_{j,k}^*\}_{k\in\nn}$
of cubes, which has finite intersection property, such that
$$
\mathcal{O}_j=\cup_{k\in\nn}Q_{j,k}^*.
$$
\item[{\rm(ii)}]There exist distributions $\{g_j\}_{j\in\zz}$ and $\{b_j\}_{j\in\zz}$
such that, for any $j\in\zz$, $f=g_j+b_j$ in $\mathcal{S}'(\rn)$.
\item[{\rm(iii)}]For any $j\in\zz$, the distribution $g_j$ satisfies that, for any $x\in\rn$,
\begin{equation}\label{gj}
m_0(g_j)(x)\lesssim m_N(f)(x)\mathbf1_{\mathcal{O}_j^{\complement}}(x)+\sum_{k\in\nn}
\frac{2^jl_{j,k}^{n+d+1}}{(l_{j,k}+|x-x_{j,k}|)^{n+d+1}}\mathbf1_{Q_{j,k}^*}(x),
\end{equation}
where the implicit positive  constant is independent of $f$ and $\{g_j\}_{j\in\zz}$.
Here and thereafter,
for any $j\in\zz$ and $k\in\nn$, $x_{j,k}$ and $l_{j,k}$ denote, respectively,
the center and the side length
of $Q_{j,k}^*$.
\item[{\rm(iv)}]If $f\in L^1_{\loc}(\rn)$, then, for any $j\in\zz$,
the distribution $g_j$ as in (ii) belongs to
$L^\infty(\rn)$ and $\|g_j\|_{L^\infty(\rn)}\lesssim2^j$ with the implicit
positive constant independent of $f$ and $\{g_j\}_{j\in\zz}$.
\item[{\rm(v)}]For any
$j\in\zz$,
$b_j=\sum_{k\in\nn}b_{j,k}$ in $\mathcal{S}'(\rn)$ and,
for any $k\in\nn$, $b_{j,k}:=(f-c_{j,k})\eta_{j,k}$ if $l_{j,k}<1$
and $b_{j,k}:=f\eta_{j,k}$ if $l_{j,k}\geq1$, where $\{\eta_{j,k}\}_{k\in\nn}$ is a partition
of unity with respect to $\{Q_{j,k}^*\}_{k\in\nn}$, namely, for any $k\in\nn$,
$\eta_{j,k}\in \mathcal{S}(\rn)$, $\supp(\eta_{j,k})\subset Q_{j,k}^*$, $0\leq\eta_{j,k}\leq1$,
and
$$
\sum_{k\in\nn}\eta_{j,k}=\mathbf1_{\mathcal{O}_j},
$$
$c_{j,k}$ is a polynomial satisfying, for any $q\in\mathcal{P}_d(\rn)$,
$$
\la f-c_{j,k},q\eta_{j,k}\ra=0.
$$
Moreover, for any $j\in\zz$, $k\in\nn$ and $x\in\rn$,
$$
m_0(b_{j,k})(x)\lesssim m_N(f)(x)\mathbf1_{Q_{j,k}^*}(x)+\frac{2^j
l_{j,k}^{n+d+1}}{|x-x_{j,k}|^{n+d+1}}\mathbf1_{Q_{j,k}^{*\complement}}(x),
$$
where the implicit positive  constant is independent of $f$, $k$ and $j$.
\end{enumerate}
\end{lemma}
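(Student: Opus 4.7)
My plan is to adapt the classical local Calderón--Zygmund decomposition of Triebel \cite{t}, whose argument goes through essentially verbatim at the level of geometric, distributional and pointwise claims once one verifies that the only use of the ambient function space is (a) $f\in\mathcal{S}'(\rn)$, (b) lower semicontinuity of $m_N(f)$, which makes $\mathcal{O}_j$ open, and (c) finiteness of $m_N(f)$ almost everywhere, which follows from $\|m_N(f)\|_{(E_\Phi^q)_t(\rn)}<\infty$ (Theorem \ref{mdj}) together with Lemma \ref{ballproof}. Under (a)--(c), (i) is the standard Whitney decomposition of the open set $\mathcal{O}_j$, producing cubes with bounded overlap whose side length is comparable to their distance to $\mathcal{O}_j^\complement$.

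Next I would fix a smooth partition of unity $\{\eta_{j,k}\}_{k\in\nn}$ subordinate to $\{Q_{j,k}^*\}_{k\in\nn}$ with $\sum_k\eta_{j,k}=\mathbf{1}_{\mathcal{O}_j}$, $\eta_{j,k}\in\mathcal{S}(\rn)$, $\supp\eta_{j,k}\subset Q_{j,k}^*$ and $0\leq\eta_{j,k}\leq 1$. The bad parts are then defined as in (v): for $l_{j,k}<1$, set $b_{j,k}:=(f-c_{j,k})\eta_{j,k}$, where $c_{j,k}\in\mathcal{P}_d(\rn)$ is the unique polynomial with $\langle f-c_{j,k},q\eta_{j,k}\rangle=0$ for every $q\in\mathcal{P}_d(\rn)$ (existence and uniqueness come from positive-definiteness of the bilinear form $(p,q)\mapsto\langle p\eta_{j,k},q\rangle$ on $\mathcal{P}_d(\rn)$); for $l_{j,k}\geq 1$, set $b_{j,k}:=f\eta_{j,k}$ with no polynomial subtraction, since the local grand maximal function probes only scales $s\in(0,1)$ and moment cancellation at scales $\geq 1$ is unnecessary for the decay demanded in (v). Then $b_j:=\sum_k b_{j,k}$ converges in $\mathcal{S}'(\rn)$ by the finite intersection property of $\{Q_{j,k}^*\}$, and $g_j:=f-b_j$ yields (ii).

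The main obstacle is the pointwise estimate (iii): for $x\in\mathcal{O}_j^\complement$ only the first summand in \eqref{gj} is active, and it follows directly from the definition of $m_N$; for $x\in\mathcal{O}_j$, one writes $g_j\ast\varphi_s(x)$ as a sum over $k$ and splits according to whether $|x-x_{j,k}|\lesssim l_{j,k}+s$ or not. For cubes with $l_{j,k}<1$, the moment cancellation of $(f-c_{j,k})\eta_{j,k}$ against the Taylor polynomial of $\varphi_s$ of degree $d$ produces the required $l_{j,k}^{n+d+1}/(l_{j,k}+|x-x_{j,k}|)^{n+d+1}$ decay; for cubes with $l_{j,k}\geq 1$ the same decay follows from the elementary inequality $s<1\leq l_{j,k}$ together with the compact support of $\eta_{j,k}$. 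The analogous pointwise estimate for $m_0(b_{j,k})$ in (v) is obtained by running the same splitting on a single cube, using $\supp b_{j,k}\subset Q_{j,k}^*$.

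Finally, for (iv), when $f\in L^1_{\loc}(\rn)$, the Lebesgue differentiation theorem gives $|f(x)|\leq m_N(f)(x)\leq 2^j$ for almost every $x\in\mathcal{O}_j^\complement$. A direct algebraic calculation using $\sum_k\eta_{j,k}=\mathbf{1}_{\mathcal{O}_j}$ shows that on $\mathcal{O}_j$ one has $g_j=\sum_{k:\,l_{j,k}<1}c_{j,k}\eta_{j,k}$ (the $f$-terms cancel exactly, whether $l_{j,k}<1$ or $l_{j,k}\geq 1$), so it suffices to bound each $|c_{j,k}|$ on $Q_{j,k}^*$. The Whitney property $\dist(Q_{j,k}^*,\mathcal{O}_j^\complement)\sim l_{j,k}<1$ supplies a point $y\in\mathcal{O}_j^\complement$ near $Q_{j,k}^*$ whose averages of $|f|$ over a fixed dilate of $Q_{j,k}^*$ are controlled by $m_N(f)(y)\leq 2^j$, giving $|c_{j,k}|\lesssim 2^j$ and hence $\|g_j\|_{L^\infty(\rn)}\lesssim 2^j$.
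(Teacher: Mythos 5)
Your overall plan is the same as the paper's: the paper gives no details, simply invoking the local Calder\'on--Zygmund construction of \cite[Section 4]{t} (note that \cite{t} is Tang's paper on weighted local Hardy spaces, not Triebel), and the ingredients you isolate --- openness of $\mathcal{O}_j$ by lower semicontinuity, a.e.\ finiteness of $m_N(f)$, Whitney cubes with bounded overlap, a subordinate partition of unity, and projection polynomials only for cubes with $l_{j,k}<1$ --- are exactly what that argument needs, and your definitions of $b_{j,k}$, $c_{j,k}$, $g_j$ match the statement.

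Two of your compressed justifications, however, would not survive as written. First, for part (iii) on $\mathcal{O}_j^{\complement}$ you assert that the bound ``follows directly from the definition of $m_N$''; it does not, because $g_j=f-\sum_{k}b_{j,k}$ and $m_0$ is not a local operator, so at a point $x\notin\mathcal{O}_j$ (in particular near $\partial\mathcal{O}_j$) the corrections made inside $\mathcal{O}_j$ still contribute to $m_0(g_j)(x)$. One must run the same small-cube/large-cube splitting there, using the moment cancellation of $b_{j,k}$ for $l_{j,k}<1$ together with a bound of the form $\|c_{j,k}\|_{L^\infty(Q_{j,k}^*)}\lesssim m_N(f)(y)$ for a point $y\in\mathcal{O}_j^{\complement}$ within distance $\lesssim l_{j,k}$ of $Q_{j,k}^*$; this exterior estimate is the genuinely delicate part of the classical lemma, not a triviality, and the same estimates (rather than the finite intersection property alone, which does not prevent a test function from meeting infinitely many Whitney cubes) are what give the convergence of $\sum_k b_{j,k}$ in $\mathcal{S}'(\rn)$. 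Second, in part (iv) you control $|c_{j,k}|$ by ``averages of $|f|$ over a fixed dilate of $Q_{j,k}^*$ \ldots controlled by $m_N(f)(y)$''; the grand maximal function does not control averages of $|f|$, only pairings of $f$ with admissible smooth bumps. The standard correct route is that the coefficients of $c_{j,k}$ are the pairings $\la f,\widetilde{q}\,\eta_{j,k}\ra$ against a dual basis $\widetilde{q}$ of $\mathcal{P}_d(\rn)$ for the inner product weighted by $\eta_{j,k}$, and $\widetilde{q}\,\eta_{j,k}$, suitably normalized, is a constant multiple of an element of $\mathcal{F}_N(\rn)$ dilated to scale $\sim l_{j,k}<1$ and centered near such a point $y\in\mathcal{O}_j^{\complement}$, whence $|c_{j,k}|\lesssim m_N(f)(y)\le 2^j$. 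With these two steps repaired, your argument is exactly the one the paper appeals to.
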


\begin{theorem}\label{decomposition}
Let $t,\,q\in(0,\infty)$ and $\Phi$ be an Orlicz function with positive
lower type $p_{\Phi}^-$ and positive upper type $p_{\Phi}^+$.
Let $r\in(\max\{1,\,q,\,p_{\Phi}^+\},\infty]$,
$s\in (0,\min\{p^-_{\Phi},q,1\})$ and $d\in\mathbb{Z}_+$ satisfy $d\geq\lfloor n(\frac{1}{\min\{p^-_{\Phi},q,1\}}-1)\rfloor$. Then, for any
$f\in (hE_\Phi^q)_t(\rn)\cap L_{\loc}^1(\rn)$,
there exist a sequence $\{a_{j,k}\}_{j\in\zz,k\in\nn}$
of local $((E_\Phi^q)_t,\infty,d)$-atoms supported, respectively, in cubes
$\{Q_{j,k}\}_{j\in\zz,k\in\nn}\subset\mathcal{Q}$, and a sequence $\{\lambda_{j,k}\}_{j\in\zz,k\in\nn}\subset[0,\infty)$
such that
\begin{equation}\label{convergence2}
f=\sum_{j\in\zz,k\in\nn}\lambda_{j,k} a_{j,k}
\end{equation}
converges both in $\mathcal{S}'(\rn)$ and almost everywhere and,
for any $j\in\zz$,
\begin{equation}\label{55}
\mathcal{O}_j=\cup_{k\in\nn}Q_{j,k}\ and\
\{Q_{j,k}\}_{k\in\nn}\ has\ finite\ intersection\ property,
\end{equation}
where $\mathcal{O}_j:=\{x\in\rn:\ m_N(f)(x)>2^j\}$.
Moreover, for any $j\in\zz$, $k\in\nn$
and almost every $x\in\rn$,
\begin{equation}\label{66}
|\lambda_{j,k} a_{j,k}(x)|\lesssim2^j
\end{equation}
and
\begin{equation}\label{88}
\lf\|\lf\{\sum_{j\in\zz,k\in\nn} \lf[\frac{\lambda_{j,k}}
{\|\mathbf1_{Q_{j,k}}\|_{(E_\Phi^q)_t(\rn)}} \r]^s\mathbf1_{Q_{j,k}} \r\}
^{\frac{1}{s}}\r\|_{(E_\Phi^q)_t(\rn)}\lesssim\|f\|_{(hE_\Phi^q)_t(\rn)},
\end{equation}
where the implicit positive constant is independent of $f$, $x$ and $t$.
\end{theorem}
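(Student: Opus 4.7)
The plan is to follow the standard Calder\'on--Zygmund-style decomposition for Hardy spaces adapted to the local setting, using Lemma \ref{czd} as the workhorse. For $f\in (hE_\Phi^q)_t(\rn)\cap L_{\loc}^1(\rn)$, I would fix a sufficiently large $N\in\nn$ (as in Theorem \ref{mdj}) and for each $j\in\zz$ apply Lemma \ref{czd} to obtain $\mathcal{O}_j=\cup_{k\in\nn}Q_{j,k}^*$, the splitting $f=g_j+b_j$, and the partition of unity $\{\eta_{j,k}\}_k$ on $\mathcal{O}_j$. Using Lemma \ref{czd}(iii) and (iv), I would check that $g_j\to f$ in $\mathcal{S}'(\rn)$ and a.e.\ as $j\to\infty$, and $g_j\to 0$ in $\mathcal{S}'(\rn)$ and a.e.\ as $j\to-\infty$; this uses that the measure of $\mathcal{O}_j$ tends to zero as $j\to\infty$ (since $m_N(f)\in (E_\Phi^q)_t(\rn)$) and that $\|g_j\|_{L^\infty}\ls 2^j\to 0$ as $j\to-\infty$ when $f\in L^1_\loc(\rn)$. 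Thus one has the telescoping identity
\begin{equation*}
f=\sum_{j\in\zz}(g_{j+1}-g_j)=\sum_{j\in\zz}(b_j-b_{j+1})
\end{equation*}
in $\mathcal{S}'(\rn)$ and almost everywhere.

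The next step is to re-group $b_j-b_{j+1}$ into local atoms. Following the classical recipe (as in Stein's book or in the atomic decompositions already used in \cite{ykds,zyyw}), for each $(j,k)$ I would set
\begin{equation*}
\lambda_{j,k}a_{j,k}:=b_{j,k}-\sum_{k'\in\nn}\lf(b_{j+1,k'}\eta_{j,k}-P_{j,k,k'}\r),
\end{equation*}
where $P_{j,k,k'}$ is the unique polynomial in $\mathcal{P}_d(\rn)$ (for $\ell(Q_{j,k}^*)<1$) such that $(b_{j+1,k'}\eta_{j,k}-P_{j,k,k'})\eta_{j+1,k'}^{-1}$ has vanishing moments of order up to $d$ against $\eta_{j+1,k'}$, and $P_{j,k,k'}\equiv 0$ otherwise. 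I would then let $Q_{j,k}$ be a fixed dilate $cQ_{j,k}^*$ with $c$ independent of all parameters so that $\supp(\lambda_{j,k}a_{j,k})\subset Q_{j,k}$ and normalize $a_{j,k}$ so that $\|a_{j,k}\|_{L^\infty(\rn)}\le|Q_{j,k}|/\|\mathbf1_{Q_{j,k}}\|_{(E_\Phi^q)_t(\rn)}$. The bound \eqref{66} then reduces, via Lemma \ref{czd}(iv) together with standard estimates of $|P_{j,k,k'}|$ on $Q_{j+1,k'}^*$ (which only live inside $Q_{j,k}^*$ since $\mathcal{O}_{j+1}\subset\mathcal{O}_j$), to $|\lambda_{j,k}a_{j,k}|\ls 2^j$ pointwise. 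The vanishing moment condition $a_{j,k}\perp\mathcal{P}_d(\rn)$ (when $\ell(Q_{j,k})<1$) is ensured by the choice of $P_{j,k,k'}$ and the moment condition on $b_{j,k}$ furnished by Lemma \ref{czd}(v); when $\ell(Q_{j,k}^*)\ge 1$ no moments are required, matching the definition of a local atom.

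Finally I would verify \eqref{88}. From $|\lambda_{j,k}a_{j,k}|\ls 2^j$ and the normalization of $a_{j,k}$, one has $\lambda_{j,k}\ls 2^j\|\mathbf1_{Q_{j,k}}\|_{(E_\Phi^q)_t(\rn)}$, so pointwise
\begin{equation*}
\sum_{j\in\zz,k\in\nn}\lf[\frac{\lambda_{j,k}}{\|\mathbf1_{Q_{j,k}}\|_{(E_\Phi^q)_t(\rn)}}\r]^s\mathbf1_{Q_{j,k}}
\ls \sum_{j\in\zz}2^{js}\sum_{k\in\nn}\mathbf1_{Q_{j,k}}
\ls \sum_{j\in\zz}2^{js}\mathbf1_{\mathcal{O}_j},
\end{equation*}
where the last step uses the finite intersection property \eqref{55}. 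Since $\sum_{j}2^{js}\mathbf1_{\mathcal{O}_j}(x)\sim [m_N(f)(x)]^s$, taking $\frac{1}{s}$-th powers and the $(E_\Phi^q)_t(\rn)$ quasi-norm yields the right-hand side $\ls \|m_N(f)\|_{(E_\Phi^q)_t(\rn)}\sim\|f\|_{(hE_\Phi^q)_t(\rn)}$ by Theorem \ref{mdj}.

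I expect the main obstacle to be the bookkeeping in the second step: carefully constructing the correction polynomials $P_{j,k,k'}$ in a way compatible with the partition of unity, verifying simultaneously the support, moment and sup-norm conditions that make each $a_{j,k}$ a genuine local $((E_\Phi^q)_t(\rn),\infty,d)$-atom, and proving the convergence of \eqref{convergence2} in $\mathcal{S}'(\rn)$ and almost everywhere. The norm estimate \eqref{88}, by contrast, is largely a consequence of the geometric covering property of the $\mathcal{O}_j$ and Theorem \ref{mdj}, and should be mostly mechanical once the atomic decomposition is in hand.
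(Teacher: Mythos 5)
Your strategy is the same as the paper's: apply the Calder\'on--Zygmund decomposition of Lemma \ref{czd} at every level $2^j$, pass to the telescoping identity $f=\sum_{j\in\zz}(g_{j+1}-g_j)$ (using $\|g_j\|_{L^\infty(\rn)}\lesssim 2^j$ for $j\to-\infty$ and $g_j\to f$ for $j\to\infty$), regroup each difference $g_{j+1}-g_j=b_j-b_{j+1}$ into compactly supported pieces with polynomial corrections, and finally deduce \eqref{88} from the finite intersection property and the pointwise equivalence $\sum_{j\in\zz}2^{js}\mathbf{1}_{\mathcal{O}_j}\sim[m_N(f)]^s$ together with Theorem \ref{mdj}. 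The main place where your write-up would fail as stated is the regrouping step: your correction terms are bare polynomials, $\lambda_{j,k}a_{j,k}:=b_{j,k}-\sum_{k'}(b_{j+1,k'}\eta_{j,k}-P_{j,k,k'})$ with $P_{j,k,k'}\in\mathcal{P}_d(\rn)$. A nonzero polynomial has no compact support, so this choice is incompatible with $\supp(\lambda_{j,k}a_{j,k})\subset cQ^*_{j,k}$, and, more seriously, the corrections do not cancel when you sum over $k$, so $\sum_k\lambda_{j,k}a_{j,k}$ would not reproduce $b_j-b_{j+1}$. The correct correction is $c_{j+1,k,i}\,\eta_{j+1,i}$, where $c_{j+1,k,i}$ is the orthogonal projection of $(f-c_{j+1,i})\eta_{j,k}$ onto $\mathcal{P}_d(\rn)$ with respect to the inner product weighted by $\eta_{j+1,i}$: then each correction is supported in $Q^*_{j+1,i}\subset\mathcal{O}_j$, and $\sum_k c_{j+1,k,i}=0$ because $\sum_k\eta_{j,k}=1$ on $Q^*_{j+1,i}$ and $f-c_{j+1,i}$ has vanishing $\eta_{j+1,i}$-weighted moments; this is exactly the construction of \cite[Lemma 5.4]{t}, which the paper cites rather than reproves, and it is also what yields $\|A_{j,k}\|_{L^\infty(\rn)}\lesssim 2^j$ and the moment condition when $\ell(Q_{j,k})<1$.

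Two further points. First, $|\mathcal{O}_j|\to0$ only gives $b_j\to0$ almost everywhere; it does not by itself give $g_j\to f$ in $\mathcal{S}'(\rn)$. The paper gets this quantitatively: by Lemma \ref{czd}(v), the vector-valued maximal inequality of Lemma \ref{main} and the maximal function characterization, $\|f-g_j\|_{(hE_\Phi^q)_t(\rn)}\lesssim\|m_N(f)\mathbf{1}_{\mathcal{O}_j}\|_{(E_\Phi^q)_t(\rn)}\to0$ as $j\to\infty$, and convergence in $(hE_\Phi^q)_t(\rn)$ implies convergence in $\mathcal{S}'(\rn)$; you need some such argument. Second, a small slip: a local $((E_\Phi^q)_t(\rn),\infty,d)$-atom satisfies $\|a\|_{L^\infty(\rn)}\le\|\mathbf{1}_{Q}\|_{(E_\Phi^q)_t(\rn)}^{-1}$, without the factor $|Q_{j,k}|$ you wrote; with the normalization $\lambda_{j,k}\sim2^j\|\mathbf{1}_{Q_{j,k}}\|_{(E_\Phi^q)_t(\rn)}$ that you in fact use afterwards, your verification of \eqref{66} and \eqref{88} goes through exactly as in the paper.
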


\begin{proof}
Assume that $f\in L_{\loc}^1(\rn)\cap(hE_\Phi^q)_t(\rn)$.
Then, by Lemma \ref{czd}, we know that,
for any $j\in\zz$, there exist two distributions $g_j$
and $b_j$ such that
$$
f=g_j+b_j  \quad\mathrm{in}\ \mathcal{S}'(\rn)
$$
and $b_j=\sum_{k\in\nn}b_{j,k}$ in $\mathcal{S}'(\rn)$, where, for any $j\in\zz$
and $k\in\nn$,
$$
b_{j,k}:=\begin{cases}\displaystyle\ (f-c_{j,k})\eta_{j,k}\quad &\mathrm{if}\ l_{j,k}\in(0,1),\\
\ f\eta_{j,k}\quad &\mathrm{if}\ l_{j,k}\in[1,\infty)
\end{cases}
$$
supported in $Q_{j,k}^*$, and $Q_{j,k}^*$, $l_{j,k}$, $c_{j,k}$ and $\eta_{j,k}$ are as in Lemma \ref{czd}.
From this, Theorem \ref{mdj}(ii) and Lemma \ref{czd}(v),
we deduce that
\begin{align}\label{c11}
\lf\|f-g_j\r\|_{(hE_\Phi^q)_t(\rn)}&\sim
\lf\|\sum_{k\in\nn}b_{j,k}\r\|_{(hE_\Phi^q)_t(\rn)}\lesssim
\lf\|\sum_{k\in\nn}m_0(b_{j,k})\r\|_{(E_\Phi^q)_t(\rn)}\\\noz
&\lesssim
\lf\|\sum_{k\in\nn}m_N(f)\mathbf1_{Q_{j,k}^*}\r\|_{(E_\Phi^q)_t(\rn)}
+2^j\lf\|\sum_{k\in\nn}\frac{
l_{j,k}^{n+d+1}}{|\cdot-x_{j,k}|^{n+d+1}}\mathbf1_{Q_{j,k}^{*\complement}}\r\|_{(E_\Phi^q)_t(\rn)}\\\noz
&\lesssim\lf\|\sum_{k\in\nn}m_N(f)\mathbf1_{Q_{j,k}^*}\r\|_{(E_\Phi^q)_t(\rn)}
+2^j\lf\|\lf\{\sum_{k\in\nn}\lf[\mathcal{M}(\mathbf1_{Q_{j,k}^*})\r]
^{\frac{1}{u}}\r\}^{u}\r\|_{(E_{\Phi_u}^{\frac{q}{u}})_t(\rn)},
\end{align}
where $u:=\frac{n}{n+d+1}$ and $\Phi_u(\tau):=\Phi(\sqrt[u]{\tau})$ for any $\tau\in[0,\infty)$. Notice that $\min\{p^-_{\Phi},q\}>u$
and $\Phi_u$ is of lower type $\frac{p_{\Phi}^-}{u}\in(1,\infty)$.
By this, \eqref{c11} and Lemma \ref{main}, we conclude that
\begin{align}\label{340}
\lf\|f-g_j\r\|_{(hE_\Phi^q)_t(\rn)}
&\lesssim\lf\|\sum_{k\in\nn}m_N(f)\mathbf1_{Q_{j,k}^*}\r\|_{(E_\Phi^q)_t(\rn)}
+2^j\lf\|\sum_{k\in\nn}\mathbf1_{Q_{j,k}^*}
\r\|_{(E_{\Phi}^{q})_t(\rn)}\\ \noz
&\lesssim
\lf\|m_N(f)\mathbf1_{\mathcal{O}_{j}}\r\|_{(E_\Phi^q)_t(\rn)}\to0
\end{align}
as $j\to\infty$.
In addition, from the fact that $f\in L_{\loc}^1(\rn)$ and Lemma \ref{czd},
it follows that, for any $j\in\zz$,
$\|g_j\|_{L^\infty(\rn)}\lesssim2^j$.
This, together with \eqref{340}, further implies that
\begin{equation}\label{341}
f=\sum_{j\in\zz}\lf(g_{j+1}-g_{j}\r) \quad \mathrm{in}\ \mathcal{S}'(\rn).
\end{equation}
Moreover, since, for any $j\in\zz$, $\supp(b_j)\subset\mathcal{O}_j$ and $|\mathcal{O}_j|\to0$
as $j\to\infty$, if
letting $\mathcal{O}:=\cap_{j\in\zz}\mathcal{O}_j$,
then it follows that $|\mathcal{O}|=0$.
Moreover, for any $x\in\rn\setminus\mathcal{O}$,
by $\mathcal{O}_j\supset\mathcal{O}_{j+1}$
and $\supp(b_j)\subset\mathcal{O}_j$ for
any $j\in\zz$, we know that there exists a
$j_0\in\zz$ such, for any $j\geq j_0$, $b_j(x)=0$.
Thus, as $j\to\infty$, $b_j\to0$ almost everywhere,
which, together with Lemma \ref{czd}(ii), further
implies
that, as $j\to\infty$, $g_j\to f$ almost everywhere.
Thus, \eqref{341} also holds true almost everywhere.

For any $j\in\zz$, $k\in\nn$ and $q\in\mathcal{P}_d(\rn)$, let
$F_j:=\{m\in\nn:\ l_{j,m}<1\}$,
\begin{equation}\label{342}
\|q\|_{j,k}:=\lf[\frac{1}{\int_{\rn}\eta_{j,k}(x)\,dx}\int_{\rn}|q(x)|^2
\eta_{j,k}(x)\,dx\r]^{1/2}
\end{equation}
and, for any $i,\ k\in\nn$ and $j\in\zz$, $c_{j+1,k,i}$ be the orthogonal projection of
$(f-c_{j+1,i})\eta_{j,k}$ on $\mathcal{P}_d(\rn)$ with respect to the norm in \eqref{342}.
Then, by the proof of \cite[Lemma 5.4]{t} (see also \cite[Lemma 5.4]{yy}), we know
that
\begin{equation}\label{31}
f=\sum_{j\in\zz}\lf(g_{j+1}-g_{j}\r)=\sum_{j\in\zz}\sum_{k\in\nn}
\lf[b_{j,k}-\sum_{i\in\nn}b_{j+1,i}\eta_{j,k}+\sum_{i\in F_{j+1}}c_{j+1,k,i}\eta_{j+1,i}\r]
=:\sum_{j\in\zz}\sum_{k\in\nn}A_{j,k}
\end{equation}
converges in $\mathcal{S}'(\rn)$ and almost everywhere.
Moreover, by the proof of \cite[Lemma 5.4]{t}, we also know that, for any $j\in\zz$
and $k\in\nn$, $A_{j,k}$ is supported in $Q_{j,k}:=C_0Q_{j,k}^*$
satisfying $\|A_{j,k}\|_{L^\infty(\rn)}\leq C_12^j$ with $C_0$ and $C_1$
being some positive constants independent of $j$ and $k$,
and
\begin{equation}\label{77}
\{Q_{j,k}\}_{k\in\nn}\ \mathrm{has\ finite\ intersection\ property\ satisfying}\
\mathcal{O}_j=\cup_{k\in\nn}Q_{j,k},
\end{equation}
and,
when $\ell(Q_{j,k})<1$, for any $q\in\mathcal{P}_d(\rn)$,
$$
\int_{\rn}A_{j,k}(x)q(x)\,dx=0,
$$
where $\ell(Q_{j,k})$ denotes the side length
of $Q_{j,k}$.
For any $j\in\zz$ and $k\in\nn$, let
\begin{equation}\label{343}
\lambda_{j,k}:=C_02^j\|\mathbf1_{Q_{j,k}}\|_{(E_{\Phi}^{q})_t(\rn)}
\quad \mathrm{and}\quad a_{j,k}:=\frac{A_{j,k}}{\lambda_{j,k}}.
\end{equation}
Then it is easy to see that, for any $j\in\zz$ and $k\in\nn$,
\begin{equation}\label{111}
|\lambda_{j,k} a_{j,k}|\lesssim2^j
\end{equation}
and $a_{j,k}$ is a local $((E_\Phi^q)_t(\rn),\infty,d)$-atom.
Moreover, we have
$$
f=\sum_{j\in\zz}\sum_{k\in\nn}\lambda_{j,k}a_{j,k}
$$
converges in $\mathcal{S}'(\rn)$ and almost everywhere.
From this, \eqref{77} and \eqref{111}, we deduce that \eqref{convergence2},
\eqref{55} and \eqref{66} of Theorem \ref{decomposition} hold true.

In addition, from \eqref{343}, the fact that $\cup_{k\in\nn}Q_{j,k}=\mathcal{O}_j$,
the finite intersection
property of $\{Q_{j,k}\}_{k\in\nn}$ and the definition of $\mathcal{O}_j$, we further
deduce that, for any given $s\in (0,\min\{p^-_{\Phi},q,1\})$,
\begin{align*}
&\lf\|\lf\{\sum_{j\in\zz}\sum_{k\in\nn} \lf[\frac{\lambda_{j,k}\mathbf1_{Q_{j,k}}}
{\|\mathbf1_{Q_{j,k}}\|_{(E_\Phi^q)_t(\rn)}} \r]^s \r\}
^{\frac{1}{s}}\r\|_{(E_\Phi^q)_t(\rn)}\\
&\quad\sim\lf\|\lf\{\sum_{j\in\zz}\sum_{k\in\nn} 2^{js}\mathbf1_{Q_{j,k}} \r\}
^{\frac{1}{s}}\r\|_{(E_\Phi^q)_t(\rn)}
\sim\lf\|\lf\{\sum_{j\in\zz} \lf(2^{j}\mathbf1_{\mathcal{O}_j}\r)^s \r\}
^{\frac{1}{s}}\r\|_{(E_\Phi^q)_t(\rn)}\\
&\quad\sim\lf\|\sum_{j\in\zz} 2^{j}\mathbf1_{\mathcal{O}_j\setminus
\mathcal{O}_{j+1}}
\r\|_{(E_\Phi^q)_t(\rn)}
\sim\lf\|m_N(f)\r\|_{(E_\Phi^q)_t(\rn)}\sim\lf\|f\r\|_{(hE_\Phi^q)_t(\rn)}.
\end{align*}
This implies that, for any $f\in L_{\loc}^1(\rn)\cap(hE_\Phi^q)_t(\rn)$,
\eqref{88} holds true and hence finishes the proof of
Theorem \ref{decomposition}.
\end{proof}

\begin{definition}\label{definite}
Let $t,\ q\in(0,\infty)$ and $\Phi$ be an Orlicz function with positive lower type $p_{\Phi}^-$ and positive upper type $p_{\Phi}^+$. Let $r\in(\max\{1,q,\ p_{\Phi}^+\},\infty]$, $s \in(0,\min\{p^-_{\Phi},q,1\}]$ and $d\in\mathbb{Z}_+$ satisfy
$d\ge\lfloor n(\frac{1}{s}-1)\rfloor$. The \emph{finite atomic Orlicz-slice Hardy space} $(hE_\Phi^{q,r,d})_t^{\fin}(\rn)$ is defined to be the set of all finite linear combinations of
local $((E_\Phi^q)_t(\rn),\ r,\ d)$-atoms.
The quasi-norm $\|\cdot\|_{(hE_\Phi^{q,r,d})_t^{\fin}(\rn)}$ in
$(hE_\Phi^{q,r,d})_t^{\fin}(\rn)$ is defined by
setting, for any $f\in(hE_\Phi^{q,r,d})_t^{\fin}(\rn)$
\begin{align*}
&\|f\|_{(hE_\Phi^{q,r,d})_t^{\fin}(\rn)}\\
&\quad :=\inf\lf\{\lf\|\lf\{\sum_{j=1}^{m} \lf[\frac{\lambda_j}
{\|\mathbf1_{Q_j}\|_{(E_\Phi^q)_t(\rn)}} \r]^s\mathbf1_{Q_j} \r\}
^{\frac{1}{s}}\r\|_{(E_\Phi^q)_t(\rn)}:\ m\in\mathbb{N},\ f=\sum_{j=1}^{m}\lambda_j a_j,\ \{\lambda_j\}_{j=1}^m\subset[0,\infty)\r\},
\end{align*}
where the infimum is taken over all the above finite linear combinations of $f$
via a sequence
of local $((E_\Phi^q)_t(\rn),\ r,\ d)$-atoms $\{a_j\}_{j=1}^m$ supported, respectively,
in cubes $\{Q_j\}_{j=1}^m$.
\end{definition}

Then we have the following conclusion. In what follows, the \emph{symbol} $\mathcal{C}{(\rn)}$ is defined to be the
set of all continuous complex-valued functions on $\rn$.

\begin{theorem}\label{finite}
Let $t,\ q\in(0,\infty)$ and $\Phi$ be an Orlicz function with positive lower type $p_{\Phi}^-$ and positive upper type $p_{\Phi}^+$. Let $r\in(\max\{1,q,\ p_{\Phi}^+\},\infty]$, $s\in(0,\min\{p^-_{\Phi},q,1\})$ and $d\in\mathbb{Z}_+$ satisfy
$d\ge\lfloor n(\frac{1}{s}-1)\rfloor$.
\begin{enumerate}
\item[{\rm(i)}] If $r\in(\max\{1,q,\ p_{\Phi}^+\},\infty)$, then $\|\cdot\|_{(hE_\Phi^q)_t(\rn)}$ and $\|\cdot\|_{(hE_\Phi^{q,r,d})_t^{\fin}(\rn)}$ are equivalent on
     the space $(hE_\Phi^{q,r,d})_t^{\fin}(\rn)$ with the positive equivalence constants independent of $t$.
\item[{\rm(ii)}] If $r=\fz$, then $\|\cdot\|_{(hE_\Phi^q)_t(\rn)}$ and $\|\cdot\|_{(hE_\Phi^{q,\infty,d})_t^{\fin}(\rn)}$ are equivalent on $(hE_\Phi^{q,\infty,d})_t^{\fin}(\rn)\cap\mathcal{C}{(\rn)}$ with the positive equivalence constants independent of $t.$
\end{enumerate}
\end{theorem}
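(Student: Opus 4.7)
The inequality $\|f\|_{(hE_\Phi^q)_t(\rn)}\lesssim \|f\|_{(hE_\Phi^{q,r,d})_t^{\fin}(\rn)}$ is immediate from Theorem~\ref{atom ch}, since every finite atomic representation is an admissible (infinite) atomic decomposition. All the substance of the theorem lies in the reverse inequality, which I plan to establish by truncating the Calder\'on--Zygmund-type decomposition produced by Theorem~\ref{decomposition} to a finite sum, and then absorbing the resulting residual as one additional atom with a coefficient that tends to $0$.

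Let $f\in(hE_\Phi^{q,r,d})_t^{\fin}(\rn)$ (and $f\in\mathcal{C}(\rn)$ in case (ii)). By hypothesis $f$ is compactly supported, say in a cube $Q_0$ (enlarged, if necessary, so that $\ell(Q_0)\geq 1$), and belongs to $L^r(\rn)\cap L^1_{\loc}(\rn)$. Theorem~\ref{decomposition} then yields
\begin{equation*}
f=\sum_{j\in\zz}\sum_{k\in\nn}\lambda_{j,k}a_{j,k}
\end{equation*}
with local $((E_\Phi^q)_t(\rn),\infty,d)$-atoms $a_{j,k}$ supported in cubes $Q_{j,k}\subset\mathcal{O}_j:=\{m_N(f)>2^j\}$, the pointwise control $|\lambda_{j,k}a_{j,k}|\lesssim 2^j$ from \eqref{66}, and the key norm bound \eqref{88}. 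For $L\in\nn$ set
\begin{equation*}
f_L:=\sum_{|j|\leq L}\sum_{k=1}^{L}\lambda_{j,k}a_{j,k},\qquad R_L:=f-f_L.
\end{equation*}
Every $(\infty,d)$-atom is an $(r,d)$-atom, so $f_L$ is already a legitimate finite atomic combination, and its finite atomic quasi-norm is majorised, via \eqref{88}, by a constant multiple of $\|f\|_{(hE_\Phi^q)_t(\rn)}$ uniformly in $L$. The task thus reduces to exhibiting $R_L$ as a scalar multiple $\mu_L a_L$ of a single local $((E_\Phi^q)_t(\rn),r,d)$-atom supported in a fixed enlarged cube $\wt Q\supset Q_0$ with $\ell(\wt Q)\geq 1$ (which removes the vanishing moment requirement) and with $\mu_L\to 0$ as $L\to\infty$.

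Support localisation for $R_L$ rests on the decay of $m_N(f)$ at infinity (valid because $f$ is compactly supported), which forces all cubes $Q_{j,k}$ with $j\geq j_0$ to sit inside a fixed $\wt Q$; the contribution from very negative $j$ is handled via the geometric decay of $2^j$. The quantitative core is the convergence $\|R_L\|_{L^r(\rn)}\to 0$, to be proved by dominated convergence with the majorant $|f|+C\sum_{j\in\zz}2^j\mathbf1_{\mathcal{O}_j}\sim |f|+Cm_N(f)$, which lies in $L^r(\rn)$ since $f\in L^r(\rn)$ and $\|m_N(f)\|_{L^r(\rn)}\lesssim\|f\|_{L^r(\rn)}$. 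The choice $\mu_L\sim \|R_L\|_{L^r(\rn)}|\wt Q|^{1-1/r}\|\mathbf1_{\wt Q}\|_{(E_\Phi^q)_t(\rn)}$ then makes $R_L/\mu_L$ into an $(r,d)$-atom with $\mu_L\to 0$. The main obstacle is case (ii) with $r=\infty$, where one needs $\|R_L\|_{L^\infty(\rn)}\to 0$ rather than merely $L^r$-convergence; here the continuity assumption $f\in\mathcal{C}(\rn)$ becomes essential. I would upgrade the almost everywhere convergence $f_L\to f$ granted by Theorem~\ref{decomposition} to uniform convergence by combining the uniform bound $|f_L|\lesssim m_N(f)\in L^\infty(\rn)$ with the local finiteness of $\{Q_{j,k}\}_{k\in\nn}$ at each level $j$: on the compact support, only finitely many atoms from each level interact with any given neighbourhood, and the geometric decay in $|j|$ supplies a uniformly summable tail. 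Letting $L\to\infty$ then yields $\|f\|_{(hE_\Phi^{q,r,d})_t^{\fin}(\rn)}\lesssim\|f\|_{(hE_\Phi^q)_t(\rn)}$ in both (i) and (ii).
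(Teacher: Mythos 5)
Your overall strategy (easy direction via Theorem \ref{atom ch}, reverse direction by feeding $f$ into the decomposition of Theorem \ref{decomposition} and controlling the main finite part by \eqref{88}) starts out parallel to the paper, but the way you dispose of the remainder has a genuine gap: the residual $R_L$ is \emph{not} supported in a fixed cube $\wt Q$, so it cannot be renormalized into a single local $((E_\Phi^q)_t(\rn),r,d)$-atom. The atoms at level $j$ live in Whitney-type cubes covering $\mathcal{O}_j=\{m_N(f)>2^j\}$, and since $m_N$ is built from Schwartz functions (not compactly supported kernels), $m_N(f)$ has only polynomial decay; hence for $j\to-\infty$ the sets $\mathcal{O}_j$ exhaust larger and larger regions of $\rn$. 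The ``geometric decay of $2^j$'' controls the \emph{size} of the low-level contribution, not its \emph{support}, so truncating in $j$ and $k$ leaves a remainder spread over an unbounded region; the size condition $\|a\|_{L^r}\le|Q|^{1/r}/\|\mathbf 1_Q\|_{(E_\Phi^q)_t(\rn)}$ is then irrelevant because the support condition already fails (and letting $\wt Q=\wt Q_L$ grow with $L$ would force a quantitative race between $\|R_L\|_{L^r}$ and $\|\mathbf 1_{\wt Q_L}\|_{(E_\Phi^q)_t(\rn)}/|\wt Q_L|^{1/r}$ that you neither set up nor estimate). The paper avoids this precisely by \emph{never truncating the low levels}: it fixes the threshold $j'$ via Lemma \ref{youjie}, so that $\mathcal{O}_j\subset B(\vec 0_n,4R)$ for $j>j'$, writes $l:=\sum_{j>j'}\sum_k\lambda_{j,k}a_{j,k}$ and $h:=f-l$; then $\supp(h)\subset B(\vec 0_n,4R)$ comes from cancellation (not from the supports of the low-level atoms), and $\|h\|_{L^\infty(\rn)}\lesssim\sum_{j\le j'}2^j\sim 2^{j'}\lesssim\|\mathbf 1_{B(\vec 0_n,R)}\|_{(E_\Phi^q)_t(\rn)}^{-1}$, which is exactly what makes $h$ a fixed multiple of a single $(\infty,d)$-atom. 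Only the compactly supported piece $l$ is then approximated by finite partial sums (in $L^r$, for case (i)), which is where your dominated-convergence idea is sound.

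In case (ii) there is a second gap: you need $\|R_L\|_{L^\infty(\rn)}\to 0$, and ``local finiteness of $\{Q_{j,k}\}_k$ plus geometric decay in $|j|$'' does not give it. Within a single level $j\in(j',j'']$ the tail $\sum_{k>L}\lambda_{j,k}a_{j,k}$ need not tend to $0$ uniformly: points near $\partial\mathcal{O}_j$ are covered only by cubes of arbitrarily large index, and at such points the tail can remain of size comparable to $2^j$ for every $L$. This is exactly where the continuity of $f$ must be used quantitatively, as in the paper: one splits the (finitely many, since $\mathcal{O}_j=\emptyset$ for $j>j''$) levels of $l$ into atoms on cubes of diameter $\ge\delta$ (a finite sum of continuous atoms) and atoms on cubes of diameter $<\delta$, and uses the uniform continuity of $f$ to show the latter part has $L^\infty$-norm $\lesssim(j''-j')\epsilon$, hence is an arbitrarily small multiple of a continuous $(\infty,d)$-atom. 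Without an oscillation estimate of this kind, your claimed uniform convergence is unjustified, so the $r=\infty$ case does not go through as proposed.
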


To show Theorem \ref{finite}, we need the following lemma. Since the proof is similar to that of
\cite[Lemma 4.6]{zyyw}, we omit the details.
\begin{lemma}\label{youjie}
Let $t,\ q\in(0,\infty)$ and $\Phi$ be an Orlicz function with positive lower type $p_{\Phi}^-$ and positive upper type $p_{\Phi}^+$.
Let $N\in\mathbb{N}\cap(\lfloor\frac{n}{\min\{p_\Phi^-,q\}}+1\rfloor,\infty)$.
Suppose $f\in(hE_\Phi^q)_t(\rn)\cap L^1_\loc(\rn)$,
$\|f\|_{(hE_\Phi^q)_t(\rn)}=1$ and $\supp(f)\subset B(\vec{0}_n,R)$ with $R\in(1,\infty)$. Then
there exists a positive constant $C_{(N)}$, depending on $N$, but independent of $f$ and $t$, such that,
for any $x\notin B(\vec{0}_{n},4R)$,
\begin{equation}\label{423}
m_N(f)(x)\le  C_{(N)}\lf\|\mathbf1_{B(\vec{0}_n,R)}\r\|_{(E_\Phi^q)_t(\rn)}^{-1}.
\end{equation}
\end{lemma}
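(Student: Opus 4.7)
The plan is to combine a pointwise decay estimate for $m_N(f)$ at points far from $\supp(f)$ with a H\"older-type estimate in the ball quasi-Banach function space $(E_\Phi^q)_t(\rn)$. For the decay, fix $x\notin B(\vec{0}_n,4R)$ and consider any $\tau\in(0,1)$, $w\in B(x,\tau)$, and $\varphi\in\mathcal{F}_N(\rn)$. The hypothesis $R>1$ forces $|w|\ge|x|-\tau>4R-1>3R$, and hence $|w-z|\ge|w|-|z|>2R>2\tau$ for every $z\in B(\vec{0}_n,R)$. Combining the decay $|\varphi(u)|\le(1+|u|)^{-N-n}$ built into \eqref{EqcfN} with the elementary bound $(1+a)^{-N-n}\le a^{-N-n}$ for $a>0$ gives
\begin{equation*}
|\varphi_\tau\ast f(w)|\le\int_{B(\vec{0}_n,R)}\tau^{N}|w-z|^{-N-n}|f(z)|\,dz\le(2R)^{-N-n}\|f\|_{L^1(B(\vec{0}_n,R))},
\end{equation*}
where the last inequality uses $\tau<1$ and $|w-z|\ge 2R$. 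Taking the supremum over $\tau,w,\varphi$ yields $m_N(f)(x)\lesssim R^{-N-n}\|f\|_{L^1(B(\vec{0}_n,R))}$.

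The second step is to bound $\|f\|_{L^1(B(\vec{0}_n,R))}$ by $\|m_N(f)\|_{(E_\Phi^q)_t(\rn)}$ and $\|\mathbf{1}_{B(\vec{0}_n,R)}\|_{(E_\Phi^q)_t(\rn)}^{-1}$. Fixing $\psi\in\mathcal{S}(\rn)$ with $\int_{\rn}\psi=1$ and a constant $c_0>0$ small enough that $c_0\psi\in\mathcal{F}_N(\rn)$, the assumption $f\in L^1_\loc(\rn)$ together with the Lebesgue differentiation theorem yields $|f(y)|\le c_0^{-1}m_N(f)(y)$ for almost every $y\in\rn$, and hence
\begin{equation*}
\|f\|_{L^1(B(\vec{0}_n,R))}\le c_0^{-1}\int_{B(\vec{0}_n,R)}m_N(f)(y)\,dy.
\end{equation*}
By Lemma \ref{ballproof}, $(E_\Phi^q)_t(\rn)$ is a ball quasi-Banach function space, and by Lemma \ref{sconvex} it is strictly $\vartheta$-convex for any $\vartheta\in(0,\min\{p_\Phi^-,q,1\})$, so the Banach function space $[(E_\Phi^q)_t(\rn)]^{1/\vartheta}$ admits a genuine H\"older inequality. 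Invoking it together with the associate-space estimate $\|\mathbf{1}_{B(\vec{0}_n,R)}\|_{(E_\Phi^q)_t(\rn)}\cdot\|\mathbf{1}_{B(\vec{0}_n,R)}\|_{((E_\Phi^q)_t(\rn))'}\lesssim R^n$, whose quantitative form is secured by Lemma \ref{Le818}, yields
\begin{equation*}
\int_{B(\vec{0}_n,R)}m_N(f)(y)\,dy\lesssim R^{n}\,\|m_N(f)\|_{(E_\Phi^q)_t(\rn)}\,\|\mathbf{1}_{B(\vec{0}_n,R)}\|_{(E_\Phi^q)_t(\rn)}^{-1}.
\end{equation*}

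Combining the three estimates with $\|m_N(f)\|_{(E_\Phi^q)_t(\rn)}\sim\|f\|_{(hE_\Phi^q)_t(\rn)}=1$ from Theorem \ref{mdj} produces
\begin{equation*}
m_N(f)(x)\lesssim R^{-N-n}\cdot R^{n}\,\|\mathbf{1}_{B(\vec{0}_n,R)}\|_{(E_\Phi^q)_t(\rn)}^{-1}=R^{-N}\|\mathbf{1}_{B(\vec{0}_n,R)}\|_{(E_\Phi^q)_t(\rn)}^{-1}\le C_{(N)}\|\mathbf{1}_{B(\vec{0}_n,R)}\|_{(E_\Phi^q)_t(\rn)}^{-1},
\end{equation*}
thanks to $R>1$ and $N\ge 1$, which is the desired bound \eqref{423}. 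The main obstacle will be the H\"older step in the second paragraph: because $(E_\Phi^q)_t(\rn)$ is only quasi-Banach in general, H\"older's inequality is not directly available, and the associate-space bound $\|\mathbf{1}_B\|_X\cdot\|\mathbf{1}_B\|_{X'}\lesssim|B|$ must be carried out by passing to the $\vartheta$-convexification $[(E_\Phi^q)_t(\rn)]^{1/\vartheta}$ (Lemma \ref{sconvex}) and then relating the resulting dual-space norm back to $\|\mathbf{1}_{B(\vec{0}_n,R)}\|_{(E_\Phi^q)_t(\rn)}^{-1}$ via the Fefferman--Stein-type estimate for the powered maximal operator on the associate space provided by Lemma \ref{Le818}.
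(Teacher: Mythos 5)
Your Steps 1 and 2 are fine (the decay estimate for $\tau\in(0,1)$ and the a.e.\ bound $|f|\lesssim m_N(f)$), but the argument collapses at the key Step 3, and the overall reduction to $\|f\|_{L^1(B(\vec{0}_n,R))}$ cannot be repaired. The inequality $\int_{B}m_N(f)(y)\,dy\lesssim |B|\,\|m_N(f)\|_{(E_\Phi^q)_t(\rn)}\|\mathbf1_{B}\|_{(E_\Phi^q)_t(\rn)}^{-1}$ is simply false in the generality of the lemma, because $(E_\Phi^q)_t(\rn)$ is only quasi-Banach. Take $t=1$, $\Phi(\tau)=\tau^p$, $q=p\in(0,1)$, so the space is $L^p(\rn)$: by Jensen's inequality $\frac{1}{|B|}\int_B g\ge(\frac{1}{|B|}\int_B g^p)^{1/p}$, i.e.\ your inequality goes in the wrong direction, with an arbitrarily large gap when $g$ concentrates. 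Concretely, let $f$ be (a normalization of) a classical $(p,\infty)$-atom supported in a ball of radius $\epsilon$ inside $B(\vec{0}_n,R)$, with the required vanishing moments: then $\|f\|_{(hE_\Phi^q)_t(\rn)}\sim1$ uniformly in $\epsilon$, yet $\int_{B(\vec{0}_n,R)}m_N(f)\gtrsim\|f\|_{L^1}\sim\epsilon^{n-n/p}\to\infty$ as $\epsilon\to0$, while the right-hand side of your Step 3 stays bounded. The same failure occurs in the case the paper actually needs, $X=L_*^\Phi(\rn)$ with $\Phi(\tau)=\tau/\log(e+\tau)$: there $\|\lambda\mathbf1_E\|_{L^\Phi(\rn)}\sim\lambda|E|/\log(e+1/|E|)$, so $\sup\{\int_B|g|:\|g\|_X\le1\}=\infty$, i.e.\ $\mathbf1_B$ does not belong to the associate space and no bound of the form $\|\mathbf1_B\|_X\|\mathbf1_B\|_{X'}\lesssim|B|$ exists. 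Your proposed fix does not help: H\"older in the convexification $[(E_\Phi^q)_t(\rn)]^{1/\vartheta}$ only controls $\int_B[m_N(f)]^{\vartheta}$, not $\int_Bm_N(f)$, and Lemma \ref{Le818} is a boundedness statement for a powered maximal operator on $([(E_\Phi^q)_t(\rn)]^{1/s})'$, not the product estimate you invoke. More fundamentally, the same atom example shows $\|f\|_{L^1(B(\vec{0}_n,R))}$ is not controlled by $\|f\|_{(hE_\Phi^q)_t(\rn)}=1$ and $R$ at all, so any proof that funnels everything through $\|f\|_{L^1}$ (your Step 1) is doomed below the Banach range.

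The argument the paper points to (that of \cite[Lemma 4.6]{zyyw}) avoids duality entirely and is the one you should emulate: for $x\notin B(\vec{0}_n,4R)$, $\tau\in(0,1)$ and $|w-x|<\tau$, the function $z\mapsto\varphi_\tau(w-z)$ restricted to a neighborhood of $B(\vec{0}_n,R)$ has $C^N$-norm $\lesssim R^{-N-n}$ there; splitting it by a unit-scale partition of unity, each piece is an $R^{-N-n}$-multiple of an admissible test function at scale comparable to $1$ centered at an arbitrary point of the corresponding unit cube $Q_i$. This yields $m_N(f)(x)\lesssim R^{-N-n}\sum_i\inf_{Q_i}m_N(f)$, and one concludes using only the lattice property of the ball quasi-Banach function space, namely $\inf_{Q_i}m_N(f)\,\|\mathbf1_{Q_i}\|_{(E_\Phi^q)_t(\rn)}\le\|m_N(f)\|_{(E_\Phi^q)_t(\rn)}\lesssim1$, together with an estimate of the form $\|\mathbf1_{B(\vec{0}_n,R)}\|_{(E_\Phi^q)_t(\rn)}\lesssim R^{n/\theta}\|\mathbf1_{Q_i}\|_{(E_\Phi^q)_t(\rn)}$ (obtained from $\mathbf1_{B(\vec{0}_n,R)}\le R^{n/\theta}[\mathcal{M}(\mathbf1_{Q_i})]^{1/\theta}$ and Lemma \ref{main} applied after $\theta$-convexification); the hypothesis $N>\lfloor n/\min\{p_\Phi^-,q\}+1\rfloor$ is exactly what absorbs the factor $R^{n/\theta-N-n}\cdot R^{n}$ for $R>1$. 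No associate space or H\"older inequality enters, which is why the lemma survives in the full quasi-Banach range.
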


\begin{proof}[Proof of Theorem \ref{finite}]
Obviously, from Theorem \ref{atom ch}, we deduce that
$$(hE_\Phi^{q,r,d})_t^{\fin}(\rn)\subset(hE_\Phi^q)_t(\rn)$$
and, for any $f\in(hE_\Phi^{q,r,d})_t^{\fin}(\rn)$,
$$
\|f\|_{(hE_\Phi^q)_t(\rn)}\lesssim\|f\|_{(hE_\Phi^{q,r,d})_t^{\fin}(\rn)}.
$$
Thus, to complete the proof of this theorem,
we still need to show that, for any given $t,\ q,\ d$ as in Theorem \ref{finite}(i) and
$r\in(\max\{1,\ q,\ p_{\Phi}^+\},\infty)$, and any $f\in(hE_\Phi^{q,r,d})_t^{\fin}(\rn)$,
$$
\|f\|_{(hE_\Phi^{q,r,d})_t^{\fin}(\rn)}\lesssim\|f\|_{(hE_\Phi^q)_t(\rn)},
$$
and that a similar estimate also holds true for $r=\infty$, any given $t,\ q,\ d$ as in
Theorem \ref{finite}(ii),
and any $f\in(hE_\Phi^{q,\infty,d})_t^{\fin}(\rn)\cap\mathcal{C}{(\rn)}$.

Let $r\in(\max\{1,\ q,\ p_{\Phi}^+\},\infty]$. By the homogeneity of both $\|\cdot\|_{(hE_\Phi^{q,r,d})_t^{\fin}(\rn)}$ and $\|\cdot\|_{(hE_\Phi^q)_t(\rn)}$,
without loss of generality,
we may also assume that $f\in(hE_\Phi^{q,r,d})_t^{\fin}(\rn)$ and $\|f\|_{(hE_\Phi^q)_t(\rn)}=1$.
Since $f$ is a finite linear combination of $((E_\Phi^q)_t(\rn),\ r,\ d)$-atoms,
it follows that there exists an $R\in(1,\infty)$ such that
$f$ is supported in $B(\vec{0}_n,R)$. Thus, if letting $N$ be as in Lemma \ref{youjie}, then, by Lemma \ref{youjie},
we know that
there exists a positive constant $C_{(N)}$ such that, for any $x\notin B(\vec{0}_{n},4R)$,
\begin{equation}\label{78}
m_N(f)(x)\le  C_{(N)}\lf\|\mathbf1_{B(\vec{0}_n,R)}\r\|_{(E_\Phi^q)_t(\rn)}^{-1}.
\end{equation}
For any $j\in\mathbb{Z}$, let $\mathcal{O}_j:=\lf\{x\in\rn:\ m_N(f)(x)>2^j\r\}$. Denote by
$j'$ the largest integer $j$ such that
\begin{equation}\label{130}
2^{j'}<C_{(N)}\lf\|\mathbf1_{B(\vec{0}_n,R)}\r\|_{(E_\Phi^q)_t(\rn)}^{-1}.
\end{equation}
Then, by (\ref{78}), we find that,
for any $j\in\{j'+1,\ j'+2,\ \ldots\}$,
\begin{equation}\label{oo}
\mathcal{O}_j\subset B(\vec{0}_n,4R).
\end{equation}
Notice that $f\in L^{\widetilde{r}}(\rn)$, where $\widetilde{r}:=r$ when
$r\in(\max\{1,\ q,\ p_{\Phi}^+\},\infty)$,  and $\widetilde{r}:=2$ when
$r=\infty$.
Then, by Theorem \ref{decomposition},
we know that there exist a sequence $\{(a_{j,k},Q_{j,k})\}_{j\in\mathbb{Z},k\in \nn}$ of pairs of
$((E_\Phi^q)_t(\rn),\ \infty,\ d)$-atoms and their supports, and a sequence of scalars, $\{\lambda_{j,k}\}_{j\in\mathbb{Z},k\in \nn}\subset[0,\infty)$, such that
\begin{equation}\label{eqq}
f=\sum_{j\in\zz}\sum_{k\in \nn}\lambda_{j,k}a_{j,k}
\end{equation}
both in $\mathcal{S}'(\rn)$ and almost everywhere,
where, for any given $j\in\zz$, $\{Q_{j,k}\}_{k\in \nn}$ is a
family of cubes with finite intersection
property such that $\mathcal{O}_j=\cup_{k\in\nn}Q_{j,k}$.
Moreover, we have, for any given $s\in(0,\min\{p^-_{\Phi},q,1\})$,
\begin{equation}\label{331}
\lf\|\lf\{\sum_{j\in\zz}\sum_{k\in \nn} \lf[\frac{\lambda_{j,k}}
{\|\mathbf1_{Q_{j,k}}\|_{(E_\Phi^q)_t(\rn)}} \r]^s\mathbf1_{Q_{j,k}} \r\}
^{\frac{1}{s}}\r\|_{(E_\Phi^q)_t(\rn)}\lesssim\|f\|_{(hE_\Phi^q)_t(\rn)}.
\end{equation}
Define
\begin{equation}\label{eq2}
h:=\sum_{j=-\infty}^{j'}\sum_{k\in \nn}\lambda_{j,k}a_{j,k}\quad\mathrm{and}
\quad l:=\sum_{j=j'+1}^{\infty}\sum_{k\in \nn}\lambda_{j,k}a_{j,k},
\end{equation}
where the series converge both in $\mathcal{S}'(\rn)$ and almost everywhere. Clearly $f=h+l$ and,
by (\ref{oo}), $\supp(l)\subset\cup_{j=j'+1}^\infty\mathcal{O}_j\subset B(\vec{0}_n,4R)$. Therefore,
$f=l=0$ on
$\rn\setminus B(\vec{0}_n,4R)$ and hence $\supp(h)\subset B(\vec{0}_n,4R)$. Moreover, by Theorem \ref{decomposition}, we know that there exists a positive constant $C_0$ such that
$\|\lambda_{j,k}a_{j,k}\|_{L^{\infty}(\rn)}\le C_02^j$.
Since $f\in L^{\widetilde{r}}(\rn)$, it follows that
\begin{align}\label{ll}
\|l\|_{L^{\widetilde{r}}(\rn)}&\leq\lf\|\sum_{j=j'+1}^{\infty}\sum_{k\in \nn}|\lambda_{j,k}a_{j,k}|\r\|_{L^{\widetilde{r}}(\rn)}\lesssim\lf\|\sum_{j=j'+1}^{\infty}\sum_{k\in \nn}2^j\mathbf1_{Q_{j,k}}\r\|_{L^{\widetilde{r}}(\rn)}\\ \noz
&\lesssim\lf\|\sum_{j=j'+1}^{\infty}2^j
\mathbf1_{\mathcal{O}_j}\r\|_{L^{\widetilde{r}}(\rn)}\lesssim\|m_N(f)\|_{L^{\widetilde{r}}(\rn)}
\lesssim\|f\|_{L^{\widetilde{r}}(\rn)}.
\end{align}
Thus, $l\in L^{\widetilde{r}}(\rn)$ and so $h=f-l\in L^{\widetilde{r}}(\rn)$.
In order to estimate the size of $h$ in $B(\vec{0}_n,4R)$, recall that
\begin{equation}\label{129}
\lf\|\lambda_{j,k}a_{j,k}\r\|_{L^{\infty}(\rn)}\lesssim2^j,\ \supp(a_{j,k})\subset Q_{j,k}
\quad\mathrm{and}\quad\sum_{k\in\nn}\mathbf1_{Q_{j,k}}\lesssim1.
\end{equation}
From \cite[(4.21)]{zyyw}, it follows that
$
\|\mathbf1_{B(\vec{0}_n,R)}\|_{(E_\Phi^q)_t(\rn)}
\sim\|\mathbf1_{Q(\vec{0}_n,8R)}\|_{(E_\Phi^q)_t(\rn)}.
$
Combining this, (\ref{130}) and (\ref{129}), we conclude that
\begin{equation*}
\|h\|_{L^{\infty}(\rn)}\le\sum_{j\le j'}\lf\|\sum_{k\in \nn}|\lambda_{j,k}a_{j,k}|\r\|_{L^{\infty}(\rn)}
\lesssim\sum_{j\le j'}2^j\lesssim2^{j'}\lesssim\lf\|\mathbf1_{B(\vec{0}_n,R)}\r\|_{(E_\Phi^q)_t(\rn)}^{-1}
\lesssim\lf\|\mathbf1_{Q(\vec{0}_n,8R))}\r\|_{(E_\Phi^q)_t(\rn)}^{-1}.
\end{equation*}
From this,
together with the fact that $\supp(h)\subset Q(\vec{0}_n,8R)$, it follows that
there exists a positive constant $\widetilde{C}$ such that $\widetilde{C}^{-1}h$
is a local $((E_\Phi^q)_t(\rn),\infty,d)$-atom.

Now, to complete the proof of Theorem \ref{finite}(i),
we may assume that $r\in(\max\{1,\ q,\ p_{\Phi}^+\},\infty)$.
We rewrite $l$ as a finite linear
combination of $((E_\Phi^q)_t(\rn),\ r,\ d)$-atoms. For any $i\in\mathbb{N}$, let
$$
F_i:=\lf\{(j,k)\in\mathbb{Z}\times\mathbb{Z}_+:\ j\in\lf\{j'+1,\ j'+2,\ \ldots\r\},\ k\in \nn,\ |j|+k\le i\r\},
$$
and $l_i:=\sum_{(j,k)\in F_i}\lambda_{j,k}a_{j,k}$. Since the series
$l=\sum_{j=j'+1}^{\infty}\sum_{k\in \nn}\lambda_{j,k}a_{j,k}$ converges in $L^r(\rn)$,
it follows that there exists a positive integer $i_0$, which may depend on $t$ and $f$,
such that
$$\|l-l_{i_0}\|_{L^r(\rn)}\le\frac{|Q(\vec{0}_n,8R)|^{\frac{1}{r }}}{\|\mathbf1_{Q(\vec{0}_n,8R)}\|_{(E_\Phi^q)_t(\rn)}}.$$
Thus, $l-l_{i_0}$ is a local $((E_\Phi^q)_t(\rn),\ r,\ d)$-atom because $\supp(l-l_{i_0})\subset B(\vec{0}_n,4R)\subset Q(\vec{0}_n,8R)$. Therefore,
$$
f=h+l=\widetilde{C}\widetilde{C}^{-1}h+(l-l_{i_0})+l_{i_0}
$$
is a finite linear combination of $((E_\Phi^q)_t(\rn),\ r,\ d)$-atoms. Moreover, by (\ref{331}),
we have, for any given $s\in(0,\min\{p^-_{\Phi},q,1\})$,
\begin{align*}
\|f\|_{(hE_\Phi^{q,r,s})_t^{\fin}(\rn)}
&\le\lf\|\lf\{\lf[\frac{\widetilde{C}}
{\|\mathbf1_{Q(\vec{0}_n,8R)}\|_{(E_\Phi^q)_t(\rn)}} \r]^s\mathbf1_{Q(\vec{0}_n,8R)}+\lf[\frac{1}
{\|\mathbf1_{Q(\vec{0}_n,8R)}\|_{(E_\Phi^q)_t(\rn)}} \r]^s\mathbf1_{Q(\vec{0}_n,8R)}\r.\r.\\
&\quad\lf.\lf.+\sum_{(j,k)\in F_{i_0}}\lf[\frac{\lambda_{j,k}}
{\|\mathbf1_{Q_{j,k}}\|_{(E_\Phi^q)_t(\rn)}} \r]^s\mathbf1_{Q_{j,k}} \r\}
^{\frac{1}{s}}\r\|_{(E_\Phi^q)_t(\rn)}\\
&\lesssim1+\lf\|\lf\{\sum_{(j,k)\in F_{i_0}}\lf[\frac{\lambda_{j,k}}
{\|\mathbf1_{Q_{j,k}}\|_{(E_\Phi^q)_t(\rn)}} \r]^s\mathbf1_{Q_{j,k}} \r\}
^{\frac{1}{s}}\r\|_{(E_\Phi^q)_t(\rn)}\\
&\lesssim1+\lf\|\lf\{\sum_{j\in\zz}\sum_{k\in\nn} \lf[\frac{\lambda_{j,k}}
{\|\mathbf1_{Q_{j,k}}\|_{(E_\Phi^q)_t(\rn)}} \r]^s\mathbf1_{Q_{j,k}} \r\}
^{\frac{1}{s}}\r\|_{(E_\Phi^q)_t(\rn)}\lesssim1+\|f\|_{(hE_\Phi^q)_t(\rn)}\lesssim1.
\end{align*}
Thus, $\|f\|_{(hE_\Phi^{q,r,d})_t^{\fin}(\rn)}\lesssim1$.
This finishes the proof of Theorem \ref{finite}(i).

To prove Theorem \ref{finite}(ii), we assume that
$f\in(hE_\Phi^{q,\infty,d})_t^{\fin}(\rn)\cap\mathcal{C}(\rn)$ and
$\|f\|_{(hE_\Phi^q)_t(\rn)}=1$. Since $f$ has a compact support, it follows that $f$ is uniformly continuous.
Then, by \eqref{31} and the proof
of \cite[Lemma 5.4]{t}, we know that each
local $((E_\Phi^q)_t(\rn),\ \infty,\ d)$-atom $a_{j,k}$
in \eqref{eqq} is continuous. Since $f$ is bounded, from the boundedness of $m_N(f)$ on
$L^{\infty}(\rn)$, it follows that there exists a
positive integer $j''>j'$ such that $\mathcal{O}_j=\emptyset$ for
any $j\in\{j''+1,\ j''+2,\ \ldots\}$. Consequently,
in this case, $l$ in \eqref{eq2} becomes
$$
l=\sum_{j=j'+1}^{j''}\sum_{k\in \nn}\lambda_{j,k}a_{j,k}.
$$

Let $\epsilon\in(0,\infty)$. Since $f$ is uniformly continuous, it follows that there exists a
$\delta\in(0,\infty)$ such that, when $|x-y|<\delta$, then $|f(x)-f(y)|<\epsilon$.
Without loss of generality, we may assume that $\delta<1$.
Write $l=l_1^\epsilon+l_2^\epsilon$ with
$l_1^\epsilon:=\sum_{(j,k)\in F_1}\lambda_{j,k}a_{j,k}$ and
$l_2^\epsilon:=\sum_{(j,k)\in F_2}\lambda_{j,k}a_{j,k}$, where
$$F_1:=\lf\{(j,k)\in\mathbb{Z}\times\mathbb{Z}_+:\ j\in\lf\{j'+1,\ \ldots,\ j''\r\},\ k\in \nn,
\ {\rm diam}\,(Q_{j,k})\ge\delta\r\}$$
and
$$F_2:=\lf\{(j,k)\in\mathbb{Z}\times\mathbb{Z}_+:\ j\in\lf\{j'+1,\ \ldots,\ j''\r\},\ k\in \nn,\ {\rm
diam}\,(Q_{j,k})<\delta\r\}.$$
Observe that $l_1^\epsilon$ is a finite summation.
Since the atoms are continuous, we know that
$l_1^\epsilon$ is also a continuous function.

For $l_2^\epsilon$, similarly to
the proof of \cite[pp.\,44-45]{yy}, we conclude that
$$\|l_2^\epsilon\|_{L^\infty(\rn)}\lesssim(j''-j')\epsilon.$$
This means that one can write $l$ as a sum of one continuous term and one which is uniformly arbitrarily small.
Thus, $l$ is continuous and so is $h=f-l$.

To find a finite atomic decomposition of $f$, we use again the splitting $l=l_1^\epsilon+l_2^\epsilon$.
It is clear that, for any $\epsilon\in(0,\infty)$, $l_1^\epsilon$ is a finite linear combination of continuous
$((E_\Phi^q)_t(\rn),\ \infty,\ d)$-atoms. Also, since both $l$ and
$l_1^\epsilon$ are continuous, it follows that
$l_2^\epsilon=l-l_1^\epsilon$ is also continuous.
Moreover, $\supp(l_2^\epsilon)\subset B(\vec{0}_n,4R)\subset Q(\vec{0}_n,8R)$ and $ \|l_2^\epsilon\|_{L^\infty(\rn)}\lesssim(j''-j')\epsilon$.
So we can choose $\epsilon$ small enough such that $l_2^\epsilon$ becomes an arbitrarily small multiple
of a continuous local $((E_\Phi^q)_t(\rn),\ \infty,\ d)$-atom. Therefore, $f=h+l_1^\epsilon+l_2^\epsilon$ is
a finite linear continuous atomic combination. Then, by an argument similar to the proof of (i), we obtain
$\|f\|_{(hE_\Phi^{q,\infty,d})_t^{\fin}(\rn)}\lesssim1$. This finishes the proof of (ii) and hence
of Theorem \ref{finite}.
\end{proof}

\subsection{Dual spaces of $(hE_\Phi^q)_t(\rn)$\label{s3.5}}

In this subsection, we give the dual space of the
local Orlicz-slice Hardy space $(hE_\Phi^q)_t(\rn)$ with $\max\{p_{\Phi}^+,\ q\}\in(0,1]$.

We first recall the following notions of both
Orlicz-slice Hardy spaces and Campanato spaces related to
Orlicz-slice spaces, which were introduced in \cite{zyyw}. To this end,
let us begin with the following notion of radial maximal functions.

\begin{definition}
Let $\varphi\in\mathcal{S}(\rn)$.
The \emph{radial maximal function} $M(f,\varphi)$ of any $f\in\mathcal{S}'(\rn)$
is defined by setting, for any $x\in\rn$,
$$M(f,\varphi)(x):=\sup_{s\in(0,\infty)}\lf|(\varphi_s \ast f)(x)\r|.$$
\end{definition}

\begin{definition}
Let $t$, $q\in(0,\infty) $ and $\Phi$ be an Orlicz function with positive lower type $p_{\Phi}^-$ and positive upper type $p_{\Phi}^+$.
Then the \emph{Orlicz-slice Hardy space $(HE_\Phi^q)_t(\rn)$} is defined by setting
$$
(HE_\Phi^q)_t(\rn):=\lf\{f\in\mathcal{S}'(\rn):\ \|f\|_{(HE_\Phi^q)_t(\rn)}:=
\|M(f,\varphi)\|_{(E_\Phi^q)_t(\rn)}<\infty \r\},
$$
where $\varphi\in\mathcal{S}(\rn)$ satisfies
$
\int_{\rn}\varphi(x)\,dx\neq0.
$
\end{definition}

For any locally integrable function $f$ on $\rn$, recall that the \emph{minimizing polynomial}
of $f$ on the cube $Q$ with degree not greater than $d$, $P_Q^d f$, is defined to be the unique
polynomial with degree not greater than $d$ such that, for any
multi-index $\alpha\in\zz^n_+$ with $|\alpha|\leq d$,
\begin{equation}\label{2}
\int_{Q}\lf[f(x)-P_Q^d f(x)\r]x^\alpha\,dx=0.
\end{equation}

\begin{definition}\label{decamp1}
Let $t\in(0,\infty)$, $q\in(0,1]$ and $\Phi$ be an Orlicz function with positive lower type $p_{\Phi}^-$ and positive upper type $p_{\Phi}^+\in(0,1]$. Let $r\in[1,\infty)$, $s\in(0,\min\{p^-_{\Phi},q\})$ and $d\in\mathbb{Z}_+$ satisfy
$d\ge\lfloor n(\frac{1}{s}-1)\rfloor$.
The \emph{Campanato space} $\mathcal{L}_{\Phi,t}^{q,r,d}(\rn)$
is defined to be the set of all measurable functions $g$
such that
$$
\|g\|_{\mathcal{L}_{\Phi,t}^{q,r,d}(\rn)}:=\sup_{B\subset\rn}\inf_{P\in\mathcal{P}_d(\rn)}
\frac{|B|}{\|\mathbf1_{B}\|_{(E_\Phi^q)_t(\rn)}}\lf[\frac{1}{|B|}\int_{B}|g(x)-P(x)|^r\,dx\r]^{\frac{1}{r}}<\infty,
$$
where the first supremum is taken over all balls $B\subset\rn$ and
$\mathcal{P}_d(\rn)$ denotes the set of all polynomials on $\rn$ with order not greater than $d$.
\end{definition}

As usual, by a little abuse of notation, we identify $f\in\mathcal{L}_{\Phi,t}^{q,r,d}(\rn)$ with an equivalent class $f+\mathcal{P}_d(\rn)$.

Borrowing some ideas from \cite[Definition 7.1]{yy}, we introduce the following notions of
local Campanato spaces.
\begin{definition}\label{decamp}
Let $t\in(0,\infty)$, $q\in(0,1]$ and $\Phi$ be an Orlicz function with positive lower type $p_{\Phi}^-$ and positive upper type $p_{\Phi}^+\in(0,1]$. Let $r\in[1,\infty)$, $s\in(0,\min\{p^-_{\Phi},q\})$ and $d\in\mathbb{Z}_+$ satisfy
$d\ge\lfloor n(\frac{1}{s}-1)\rfloor$.
The space $\mathcal{L}_{\Phi,t,\loc}^{q,r,d}(\rn)$
is defined to be the set of all measurable functions $g$
such that
\begin{align*}
\|g\|_{\mathcal{L}_{\Phi,t,\loc}^{q,r,d}(\rn)}&:=
\sup_{\mathrm{cube\,}Q\subset\rn,\ell(Q)<1}
\frac{|Q|}{\|\mathbf1_{Q}\|_{(E_\Phi^q)_t(\rn)}}\lf[\frac{1}{|Q|}\int_{Q}|g(x)-P_Q^d g(x)|^r\,dx\r]^{\frac{1}{r}}\\
&\quad+\sup_{\mathrm{cube\,}Q\subset\rn,\ell(Q)\geq1}
\frac{|Q|}{\|\mathbf1_{Q}\|_{(E_\Phi^q)_t(\rn)}}\lf[\frac{1}{|Q|}
\int_{Q}|g(x)|^r\,dx\r]^{\frac{1}{r}}
<\infty,
\end{align*}
where $P_Q^d f$ is as in \eqref{2}.
\end{definition}

In what follows, for any $r\in[1,\infty]$,
let $r'$ be its \emph{conjugate number}, namely,
$\frac1r+\frac{1}{r'}=1$.

\begin{theorem}\label{dual2}
Let $t\in(0,\infty)$, $q\in(0,1]$ and $\Phi$ be an Orlicz function with positive lower type $p_{\Phi}^-$ and positive upper type $p_{\Phi}^+\in(0,1]$. Let $r\in(1,\infty]$, $s \in(0,\min\{p^-_{\Phi},q\})$ and $d\in\mathbb{Z}_+$ satisfy $d\ge\lfloor n(\frac{1}{s}-1)\rfloor$.
Then the dual space of $(hE_\Phi^q)_t(\rn)$, denoted by $((hE_\Phi^q)_t(\rn))^*$, is $\mathcal{L}_{\Phi,t,\loc}^{q,r',d}(\rn)$
in the following sense:
\begin{enumerate}
\item[{\rm(i)}] Any
$g\in\mathcal{L}_{\Phi,t,\loc}^{q,r',d}(\rn)$ induces a linear
functional given by
\begin{equation}\label{linf}
L_g:\ f\mapsto\ L_g(f):=\int_{\rn}f(x)g(x)dx,
\end{equation}
which is initially defined on
$(hE_\Phi^{q,r,d})_t^{\fin}(\rn)$ and  has a bounded
extension to $(hE_\Phi^q)_t(\rn)$.

\item[{\rm(ii)}] Conversely, any continuous linear
functional on $(hE_\Phi^q)_t(\rn)$ is of the form \eqref{linf}
for some $g\in\mathcal{L}_{\Phi,t,\loc}^{q,r',d}(\rn)$.
\end{enumerate}
Moreover, in any case, $\|g\|_{\mathcal{L}_{\Phi,t,\loc}^{q,r',d}(\rn)}$ is
equivalent to $\|L_g\|_{((hE_\Phi^q)_t(\rn))^*}$ with the positive equivalence constants independent of $t$,
here and thereafter, $\|\cdot\|_{((hE_\Phi^q)_t(\rn))^*}$ denotes the norm of $((hE_\Phi^q)_t(\rn))^*$.
\end{theorem}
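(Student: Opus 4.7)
The plan is to prove the two directions separately and use the finite atomic characterization as the bridge. For direction (i), I would show that any $g\in\mathcal{L}_{\Phi,t,\loc}^{q,r',d}(\rn)$ acts boundedly on the dense subspace $(hE_\Phi^{q,r,d})_t^{\fin}(\rn)$ via the pairing $L_g(f):=\int_\rn f(x)g(x)\,dx$. The key step is an atom-by-atom estimate: given a finite decomposition $f=\sum_{j=1}^m\lambda_j a_j$ and an atom $a_j$ supported in a cube $Q_j$, I would split into two cases. When $\ell(Q_j)<1$, use the moment condition $a_j\bot\mathcal{P}_d(\rn)$ to replace $g$ by $g-P_{Q_j}^d g$ in the integral and then apply H\"older's inequality with exponents $r,r'$; combining the atom size estimate with the first supremum in the definition of $\|g\|_{\mathcal{L}_{\Phi,t,\loc}^{q,r',d}(\rn)}$ yields $|\int a_j g\,dx|\lesssim\|g\|_{\mathcal{L}_{\Phi,t,\loc}^{q,r',d}(\rn)}$. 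When $\ell(Q_j)\geq 1$, the same bound follows directly from H\"older by the second supremum in the definition of the local Campanato norm. Summing over $j$ with the elementary estimate $|\sum\lambda_j a_j g|\leq\sum|\lambda_j||a_j g|$ together with a standard vector-valued comparison (using $s\in(0,\min\{p_\Phi^-,q,1\})$) produces a uniform bound by $\|f\|_{(hE_\Phi^{q,r,d})_t^{\fin}(\rn)}$, after which Theorem \ref{finite} identifies this with $\|f\|_{(hE_\Phi^q)_t(\rn)}$ and permits extension by density.

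For direction (ii), given a continuous linear functional $L$ on $(hE_\Phi^q)_t(\rn)$, I would recover $g$ cube by cube. For each cube $Q_0\subset\rn$ with $\ell(Q_0)<1$, let $L^r_{Q_0,d}(\rn)$ denote the subspace of $L^r(\rn)$ consisting of functions supported in $Q_0$ with vanishing moments up to order $d$. Any $f\in L^r_{Q_0,d}(\rn)$, after normalization, is a multiple of an $((E_\Phi^q)_t(\rn),r,d)$-atom, hence $|L(f)|\lesssim\|L\|_{((hE_\Phi^q)_t(\rn))^*}\,\|\mathbf1_{Q_0}\|_{(E_\Phi^q)_t(\rn)}\,|Q_0|^{-1/r}\,\|f\|_{L^r(\rn)}$, so $L$ defines a bounded functional on $L^r_{Q_0,d}(\rn)$. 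By Hahn--Banach and the Riesz representation theorem for $L^r$, there is a function $g_{Q_0}\in L^{r'}(Q_0)$, determined modulo $\mathcal{P}_d(\rn)$, representing $L$ on $L^r_{Q_0,d}(\rn)$. A consistency argument on nested cubes, matching representatives modulo polynomials, produces a global $g\in L^{r'}_{\loc}(\rn)$. For cubes with $\ell(Q_0)\geq 1$ the same argument applies to all of $L^r(Q_0)$ without any moment condition (since normalizing $f$ already yields a local atom). Testing the two suprema in $\|g\|_{\mathcal{L}_{\Phi,t,\loc}^{q,r',d}(\rn)}$ against $f:=|g-P_Q^d g|^{r'-1}\mathrm{sgn}(g-P_Q^d g)\mathbf1_Q$ (respectively $f:=|g|^{r'-1}\mathrm{sgn}(g)\mathbf1_Q$ for $\ell(Q)\geq 1$) and using the Riesz representation gives the bound $\|g\|_{\mathcal{L}_{\Phi,t,\loc}^{q,r',d}(\rn)}\lesssim\|L\|_{((hE_\Phi^q)_t(\rn))^*}$.

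The main obstacle will be handling the dichotomy between small and large cubes cleanly. Unlike in the homogeneous Orlicz-slice Hardy setting, local atoms on cubes with $\ell(Q)\geq 1$ carry no moment condition, which is exactly what forces the second supremum into the Campanato norm and requires the representing $g$ to have honest size control, not merely mean-oscillation control, on unit-scale and larger cubes. A secondary delicate point is ensuring that the linear functional defined on finite atomic combinations in direction (i) is well-defined on equivalence classes of decompositions rather than on individual atoms; this is precisely why Theorem \ref{finite}, which identifies the finite and infinite atomic quasi-norms on $(hE_\Phi^{q,r,d})_t^{\fin}(\rn)$, is indispensable. Finally, the verification that the gluing of local representatives $g_{Q_0}$ yields a coherent $g\in L^{r'}_{\loc}(\rn)$ modulo $\mathcal{P}_d(\rn)$ relies on the uniqueness-up-to-polynomial of the Riesz representative in each step, which must be carried through with care.
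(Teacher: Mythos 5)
Your part (i) follows the same architecture as the paper (atomwise estimates via the moment condition and H\"older when $\ell(Q)<1$, direct H\"older when $\ell(Q)\ge1$, then density through Theorems \ref{atom ch} and \ref{finite}), but your summation step glosses over the one genuinely nontrivial ingredient. After the bound $|L_g(a_j)|\le\|g\|_{\mathcal{L}_{\Phi,t,\loc}^{q,r',d}(\rn)}$ you must show
$\sum_{j=1}^m\lambda_j\lesssim\lf\|\lf\{\sum_{j=1}^m\lf[\lambda_j/\lf\|\mathbf1_{Q_j}\r\|_{(E_\Phi^q)_t(\rn)}\r]^s\mathbf1_{Q_j}\r\}^{1/s}\r\|_{(E_\Phi^q)_t(\rn)}$.
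The restriction $s\in(0,1)$ only yields the pointwise inequality $\sum_j u_j\le(\sum_j u_j^s)^{1/s}$; the essential remaining step is the reverse triangle inequality $\sum_j\|f_j\|_{(E_\Phi^q)_t(\rn)}\lesssim\|\sum_j f_j\|_{(E_\Phi^q)_t(\rn)}$ for nonnegative $f_j$ (the paper's Lemma \ref{quasi}), applied with $f_j:=\lambda_j\mathbf1_{Q_j}/\|\mathbf1_{Q_j}\|_{(E_\Phi^q)_t(\rn)}$. This superadditivity is not a ``standard vector-valued comparison'' driven by the choice of $s$: it is precisely where the hypotheses $q\in(0,1]$ and $p_{\Phi}^+\in(0,1]$ enter, and it fails for larger exponents. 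As written, your justification of this step is insufficient, although the fix is exactly Lemma \ref{quasi}.

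Your part (ii) takes a genuinely different route from the paper's, and it can be made to work, but two points need care. The paper avoids all gluing modulo polynomials: it builds $g$ only from an increasing exhaustion by cubes of side length at least $1$, on which local atoms carry no moment condition, so $L$ restricts to a bounded functional on $L^r(Q)$ and the representative is an honest (a.e.\ unique) function; the oscillation (small-cube) half of the $\mathcal{L}_{\Phi,t,\loc}^{q,r',d}(\rn)$-norm is then obtained for free by restricting $L$ to $(HE_\Phi^q)_t(\rn)\subset(hE_\Phi^q)_t(\rn)$ and quoting the global duality \cite[Theorem 5.7]{zyyw}, which gives $g\in\mathcal{L}_{\Phi,t}^{q,r',d}(\rn)$. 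You instead reprove this half directly via Hahn--Banach and Riesz representation on the moment-vanishing subspace $L^r_{Q,d}$; that is more self-contained, but your test function $|g-P_Q^dg|^{r'-1}\sign(g-P_Q^dg)\mathbf1_Q$ is not admissible as stated, since it need not annihilate $\mathcal{P}_d(\rn)$ and hence is not a multiple of a small-cube atom. The standard repair is to feed $L$ the function $f-P_Q^df$ for $\|f\|_{L^r(Q)}\le1$, note that $\|f-P_Q^df\|_{L^r(Q)}\lesssim1$ uniformly in $Q$, and use that $g-P_Q^dg$ is orthogonal on $Q$ to all polynomials of degree at most $d$, so that $\int_Q(f-P_Q^df)(x)[g(x)-P_Q^dg(x)]\,dx=\int_Q f(x)[g(x)-P_Q^dg(x)]\,dx$; taking the supremum over such $f$ recovers the oscillation. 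With that correction, and with the global $g$ fixed by the large-cube representatives (which removes the polynomial ambiguity your nested-cube gluing would otherwise leave, as needed for the size part of the local Campanato norm), your argument for (ii) is a valid and more elementary alternative to the paper's.
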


To show Theorem \ref{dual2}, we need the following technical lemma which
is just \cite[Lemma 5.4]{zyyw}.

\begin{lemma}\label{quasi}
Let $t\in(0,\infty)$, $q\in(0,1]$ and $\Phi$ be an Orlicz function with positive lower type $p_{\Phi}^-$ and positive upper type $p_{\Phi}^+\in(0,1]$.
Then there exists a nonnegative constant $C$ such that, for any sequence $\{f_j\}_{j\in\nn}\subset
(E_\Phi^{q})_t(\rn)$ of nonnegative functions
such that $\sum_{j\in\mathbb{N}}f_j$ converges
in $(E_\Phi^{q})_t(\rn),$
$$
\lf\|\sum_{j\in\mathbb{N}}f_j\r\|_{(E_\Phi^{q})_t(\rn)}
\ge C\sum_{j\in\mathbb{N}}\lf\|f_j\r\|_{(E_\Phi^{q})_t(\rn)}.
$$
\end{lemma}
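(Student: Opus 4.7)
The proof splits into two reverse-Minkowski-type inequalities — one for the inner Orlicz norm, exploiting $p_\Phi^+\leq 1$, and one for the outer $L^q$-integration, exploiting $q\leq 1$ — which are then combined pointwise.

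I would first establish that there is a constant $C_1>0$, depending only on $\Phi$, such that for any nonneg $\{g_j\}_{j\in\nn}\subset L^\Phi(\rn)$,
\begin{equation*}
\lf\|\sum_{j\in\nn}g_j\r\|_{L^\Phi(\rn)}\geq C_1\sum_{j\in\nn}\|g_j\|_{L^\Phi(\rn)}.
\end{equation*}
The key input is that the upper-type bound $\Phi(s\tau)\leq Cs^{p_\Phi^+}\Phi(\tau)$ for $s\geq 1$ with $p_\Phi^+\leq 1$, together with positive lower type, implies $\Phi$ is equivalent (up to constants) to a concave Orlicz function $\widetilde\Phi$. For $\widetilde\Phi$ concave, setting $\lambda_j:=\|g_j\|_{L^{\widetilde\Phi}(\rn)}$ and $\lambda:=\sum_j\lambda_j$ (assumed finite; else the inequality is vacuous), one writes $(\sum_jg_j)/\lambda$ as the convex combination $\sum_j(\lambda_j/\lambda)(g_j/\lambda_j)$ and applies Jensen's inequality to conclude
\begin{equation*}
\int_{\rn}\widetilde\Phi\lf(\sum_jg_j/\lambda\r)dx\geq\sum_j(\lambda_j/\lambda)\int_{\rn}\widetilde\Phi(g_j/\lambda_j)dx=1,
\end{equation*}
whence $\|\sum_jg_j\|_{L^{\widetilde\Phi}(\rn)}\geq\lambda$. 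Transferring across the equivalence $\widetilde\Phi\sim\Phi$ yields the claim, with infinite sums justified by truncating to partial sums and invoking the Fatou-type monotonicity of the Luxemburg norm for nonneg integrands.

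Applying this estimate pointwise at each $x\in\rn$ to $g_j:=f_j\mathbf{1}_{B(x,t)}$, then dividing through by $\|\mathbf{1}_{B(x,t)}\|_{L^\Phi(\rn)}$, produces the pointwise bound $G(x)\geq C_1\sum_jG_j(x)$, where $G$ and $G_j$ denote the integrand in Definition \ref{d2} corresponding respectively to $\sum_jf_j$ and to $f_j$. Now, because $q\in(0,1]$, the classical reverse Minkowski inequality in $L^q(\rn)$ — provable via the reverse H\"older inequality with negative conjugate exponent — gives $\|\sum_jG_j\|_{L^q(\rn)}\geq\sum_j\|G_j\|_{L^q(\rn)}$ for nonneg $G_j$. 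Chaining with the monotonicity of the $L^q$ norm,
\begin{equation*}
\lf\|\sum_jf_j\r\|_{(E_\Phi^q)_t(\rn)}=\|G\|_{L^q(\rn)}\geq C_1\lf\|\sum_jG_j\r\|_{L^q(\rn)}\geq C_1\sum_j\|f_j\|_{(E_\Phi^q)_t(\rn)},
\end{equation*}
which is the desired conclusion.

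The main obstacle is the first step — the concavification of $\Phi$ and the rigorous transfer of the Luxemburg reverse-Minkowski inequality from $\widetilde\Phi$ back to $\Phi$ — together with keeping $C_1$ independent of $t$ and of the sequence $\{f_j\}$. If the explicit concavification proves delicate, one can instead carry out the Jensen step directly with $\Phi$ itself, using the upper-type estimate to absorb the concavity defect into an extra multiplicative constant; this avoids producing $\widetilde\Phi$ but requires more careful bookkeeping of the two-sided weighted estimates that arise.
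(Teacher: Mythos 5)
The paper itself does not prove this lemma; it simply cites \cite[Lemma 5.4]{zyyw}, so a direct comparison is not possible. On its own merits, your argument is essentially correct and follows the natural route: a reverse Minkowski inequality for the inner Luxemburg norm (exploiting $p_\Phi^+\le1$), applied pointwise in $x$ to $g_j:=f_j\mathbf 1_{B(x,t)}$, combined with the classical reverse Minkowski inequality in $L^q$ for $q\in(0,1]$.

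Two technical points deserve to be made explicit, since they are where the ``bookkeeping'' you allude to actually lives. First, the concavification: upper type $p_\Phi^+\le1$ gives that $\tau\mapsto\Phi(\tau)/\tau$ is \emph{almost} decreasing with some constant $C$; replacing $\Phi$ by $\widetilde\Phi(\tau):=\tau\inf_{0<\sigma\le\tau}\Phi(\sigma)/\sigma$ makes this ratio genuinely decreasing, and then the least concave majorant $\widehat\Phi$ of $\widetilde\Phi$ satisfies $\widetilde\Phi\le\widehat\Phi\le2\widetilde\Phi$; altogether $\Phi\sim\widehat\Phi$ with a constant depending only on $C$. Second, and more importantly, after Jensen for $\widehat\Phi$ you obtain a \emph{modular} inequality, and transferring back from $\|\cdot\|_{L^{\widehat\Phi}}$ to $\|\cdot\|_{L^\Phi}$ only gives a modular bound of the form $\int_\rn\Phi(\sum_j g_j/\lambda)\,dx\gtrsim1$; upgrading this to the \emph{norm} bound $\|\sum_j g_j\|_{L^\Phi}\gtrsim\lambda$ genuinely uses the \emph{positive lower type} $p_\Phi^->0$ (to turn a modular comparison into a norm comparison by scaling). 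Your sketch does not mention where $p_\Phi^->0$ enters, and it would be worth flagging this, since without it the equivalence $\|\cdot\|_{L^\Phi}\sim\|\cdot\|_{L^{\widehat\Phi}}$ itself is not automatic. With these details filled in, the pointwise estimate $G(x)\ge C_1\sum_jG_j(x)$ holds with $C_1$ depending only on $p_\Phi^-$, $p_\Phi^+$ and the type constants (in particular independent of $t$ and $x$), and the outer step via reverse Minkowski in $L^q$ together with monotone convergence for the infinite sums is routine. So the argument is sound and, as far as one can tell, in the same spirit as the cited proof in \cite{zyyw}.
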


\begin{proof}[Proof of Theorem \ref{dual2}]
We first show (i). To prove
$\mathcal{L}_{\Phi,t,\loc}^{q,r',d}(\rn)\subset((hE_\Phi^q)_t(\rn))^*$,
by Theorem \ref{atom ch}, it suffices
to show that
$$
\mathcal{L}_{\Phi,t,\loc}^{q,r',d}(\rn)\subset((hE_\Phi^q)_t^{r,d}(\rn))^*.
$$
Let $g\in\mathcal{L}_{\Phi,t,\loc}^{q,r',d}(\rn)$
and $a$ be a local $((E_\Phi^q)_t(\rn),\ r,\ d)$-atom
supported in a cube $Q\subset\rn$.
When $\ell(Q)\in(0,1)$, by the
moment condition of $a$, the H\"older inequality and the size condition of $a$, we know
that
\begin{align}\label{3}
|L_g(a)|:&=\lf|\int_{\rn}a(x)g(x)\,dx\r|
=\lf|\int_{\rn}a(x)\lf[g(x)-P_Q^d g(x)\r]\,dx\r|\\ \noz
&\leq\|a\|_{L^r(\rn)}\lf[\int_{Q}|g(x)-P_Q^d g(x)|^{r'}\,dx\r]^{\frac{1}{r'}}
\leq\|g\|_{\mathcal{L}_{\Phi,t,\loc}^{q,r',d}(\rn)},
\end{align}
where $P_Q^d g$ is as in \eqref{2}.
When $\ell(Q)\in[1,\infty)$, by the H\"older inequality and
the size condition of $a$, we conclude
that
\begin{align}\label{551}
|L_g(a)|:&=\lf|\int_{\rn}a(x)g(x)\,dx\r|
\leq\|a\|_{L^r(\rn)}\lf[\int_{Q}|g(x)|^{r'}\,dx\r]^{\frac{1}{r'}}
\leq\|g\|_{\mathcal{L}_{\Phi,t,\loc}^{q,r',d}(\rn)}.
\end{align}
Moreover, for any $f\in(hE_\Phi^{q,r,d})_t^{\fin}(\rn)$, by Definition \ref{definite}, we know that there exist
a sequence $\{a_j\}_{j=1}^m$ of local
$((E_\Phi^q)_t(\rn),\ r,\ d)$-atoms supported, respectively, in cubes
$\{Q_j\}_{j=1}^m$ and a sequence $\{\lambda_j\}_{j=1}^m\subset[0,\infty)$
such that, for any given $s\in (0,\min\{p^-_{\Phi},q\})$,
\begin{align}\label{4}
\lf\|\lf\{\sum_{j=1}^{m} \lf[\frac{\lambda_j}
{\|\mathbf1_{Q_j}\|_{(E_\Phi^q)_t(\rn)}} \r]^s\mathbf1_{Q_j} \r\}
^{\frac{1}{s}}\r\|_{(E_\Phi^q)_t(\rn)}
\lesssim\|f\|_{(hE_\Phi^{q,r,d})_t^{\fin}(\rn)}.
\end{align}
Then, from \eqref{3}, \eqref{551}, Lemma \ref{quasi} and \eqref{4}, it follows that
\begin{align*}
|L_g(f)|
&\leq\sum_{j=1}^{m}\lambda_j|L_g(a_j)|
\lesssim\sum_{j=1}^{m}\lambda_j\|g\|_{\mathcal{L}_{\Phi,t,\loc}^{q,r',d}(\rn)}
\lesssim\lf\|\sum_{j=1}^{m} \frac{\lambda_j}
{\|\mathbf1_{Q_j}\|_{(E_\Phi^q)_t(\rn)}} \mathbf1_{Q_j}
\r\|_{(E_\Phi^q)_t(\rn)}\|g\|_{\mathcal{L}_{\Phi,t,\loc}^{q,r',d}(\rn)}\\
&\lesssim\lf\|\lf\{\sum_{j=1}^{m} \lf[\frac{\lambda_j}
{\|\mathbf1_{Q_j}\|_{(E_\Phi^q)_t(\rn)}} \r]^s\mathbf1_{Q_j} \r\}
^{\frac{1}{s}}\r\|_{(E_\Phi^q)_t(\rn)}
\|g\|_{\mathcal{L}_{\Phi,t,\loc}^{q,r',d}(\rn)}\\
&\lesssim\|f\|_{(hE_\Phi^{q,r,d})_t^{\fin}(\rn)}
\|g\|_{\mathcal{L}_{\Phi,t,\loc}^{q,r',d}(\rn)}.
\end{align*}
By this and the fact that $(hE_\Phi^{q,r,d})_t^{\fin}(\rn)$
is dense in $(hE_\Phi^q)_t(\rn)$,
together with Theorem
\ref{finite}, we obtain the desired conclusion of (i).

As for (ii), take any cube $Q\subset\rn$ with $\ell(Q)\in[1,\infty)$.
We first prove that
\begin{equation}\label{5}
((hE_\Phi^q)_t^{r,d}(\rn))^*\subset(L^r(Q))^*,
\end{equation}
where $L^r(Q):=\{f\in L^r(\rn):\ \supp(f)\subset Q \}$.
Obviously, for any given $f\in L^r(Q)$ and $\|f\|_{L^r(Q)}\neq0$,
$a:=\frac{|Q|^{\frac{1}{r }}}{\|\mathbf1_Q\|_{(E_\Phi^q)_t(\rn)}}
\|f\|_{L^r(Q)}^{-1}f$ is a local $((E_\Phi^q)_t(\rn),r,d)$-atom.
Thus, $f\in(hE_\Phi^{q,r,d})_t(\rn)$ and
$$
\|f\|_{(hE_\Phi^{q,r,d})_t(\rn)}=\|f\|_{L^r(Q)}
\frac{\|\mathbf1_Q\|_{(E_\Phi^q)_t(\rn)}}{|Q|^{\frac{1}{r }}}
\|a\|_{(hE_\Phi^{q,r,d})_t(\rn)}\lesssim
\frac{\|\mathbf1_Q\|_{(E_\Phi^q)_t(\rn)}}{|Q|^{\frac{1}{r }}}\|f\|_{L^r(Q)},
$$
which implies that, for any $L\in((hE_\Phi^q)_t^{r,d}(\rn))^*$ and $f\in L^r(Q)$.
$$
|L(f)|\leq\|L\|_{((hE_\Phi^q)_t^{r,d}(\rn))^*}\|f\|_{(hE_\Phi^q)_t^{r,d}(\rn)}
\lesssim\|L\|_{((hE_\Phi^q)_t^{r,d}(\rn))^*}
\frac{\|\mathbf1_Q\|_{(E_\Phi^q)_t(\rn)}}{|Q|^{\frac{1}{r }}}\|f\|_{L^r(Q)}.
$$
From this, we deduce that $L\in(L^r(Q))^*$ and hence \eqref{5} holds true.

Let $\{Q_j\}_{j\in\nn}$ be a sequence of cubes satisfying that,
for any $j\in\nn$, $Q_j\subset Q_{j+1}$,
$\cup_{j\in\nn}Q_j=\rn$ and $\ell(Q_1)\in[1,\infty)$.
Assume that $L\in((hE_\Phi^q)_t^{r,d}(\rn))^*$.
Using \eqref{5}, by an argument similar to that used in the estimation of \cite[(7.16)]{yy},
we know that there
exists a function $g$ on $\rn$ such that, for any $j\in\nn$ and
$f\in L^r(Q_j)$,
\begin{equation}\label{q2}
L(f)=\int_{Q_j}f(x)g(x)\,dx.
\end{equation}

We claim that, for any $f\in(hE_\Phi^{q,r,d})_t^{\fin}(\rn)$,
\begin{equation}\label{6}
L(f)=\int_{\rn}f(x)g(x)\,dx.
\end{equation}
Indeed, from the fact that $\cup_{j\in\nn}Q_j=\rn$, it follows that,
for any local $((E_\Phi^q)_t(\rn),r,d)$-atom $a$, there exists
a $j_0\in\nn$ such that $a\in L^r(Q_{j_0})$. By this and
\eqref{q2}, we conclude that the claim \eqref{6} holds true.

We now show that $g\in\mathcal{L}_{\Phi,t,\loc}^{q,r',d}(\rn)$.
We may assume that $g\neq0$.
Observe that, for any cube $Q\subset \rn$ with $\ell(Q)\in [1,\infty)$
and any $f\in L^r(Q)$ with $\|f\|_{L^r(Q)}\leq1$,
$a:=\frac{|Q|^{\frac{1}{r }}}{\|\mathbf1_Q\|_{(E_\Phi^q)_t(\rn)}}
f$ is a local $((E_\Phi^q)_t(\rn),r,d)$-atom and $\supp(a)\subset Q$.
From this, \eqref{q2} and the fact that
$L\in((hE_\Phi^q)_t^{r,d}(\rn))^*$, we deduce that
$$
\lf|\int_{Q}a(x)g(x)\,dx\r|=|L(a)|
\leq\|L\|_{((hE_\Phi^q)_t^{r,d}(\rn))^*}\|a\|_{(hE_\Phi^q)_t(\rn)}
\lesssim\|L\|_{((hE_\Phi^q)_t^{r,d}(\rn))^*},
$$
which implies that, for any $f\in L^r(Q)$ with $\|f\|_{L^r(Q)}\leq1$,
\begin{equation}\label{q5}
\frac{|Q|^{\frac{1}{r }}}{\|\mathbf1_Q\|_{(E_\Phi^q)_t(\rn)}}
\lf|\int_{Q}f(x)g(x)\,dx\r|\lesssim\|L\|_{((hE_\Phi^q)_t^{r,d}(\rn))^*}.
\end{equation}
When $r=\infty$, let $f:=\sign(g)$ and, when $r\in(1,\infty)$, let $f:=\frac{|g|^{r'-1}}{\||g|^{r'-1}\|_{L^r(\rn)}}$.
By this, the H\"older inequality and \eqref{q5}, we conclude that
\begin{equation}\label{7}
\frac{|Q|}{\|\mathbf1_{Q}\|_{(E_\Phi^q)_t(\rn)}}\lf[\frac{1}{|Q|}
\int_{Q}|g(x)|^{r'}\,dx\r]^{\frac{1}{r'}}\lesssim\|L\|_{((hE_\Phi^q)_t^{r,d}(\rn))^*}.
\end{equation}
Moreover, from $(HE_\Phi^q)_t(\rn)\subset(hE_\Phi^q)_t(\rn)$ and the fact that,
for any $f\in(HE_\Phi^q)_t(\rn)$,
$$\|f\|_{(hE_\Phi^q)_t(\rn)}\leq\|f\|_{(HE_\Phi^q)_t(\rn)},$$
we deduce that $((hE_\Phi^q)_t(\rn))^*\subset((HE_\Phi^q)_t(\rn))^*$ and
$L|_{(HE_\Phi^q)_t(\rn)}\in((HE_\Phi^q)_t(\rn))^*$.
Since \eqref{6} holds true for any $f\in(hE_\Phi^{q,r,d})_t^{\fin}(\rn)$,
from \cite[Theorem 5.7]{zyyw}, we deduce that
$g\in\mathcal{L}_{\Phi,t}^{q,r',d}(\rn)$ and
$$
\|g\|_{\mathcal{L}_{\Phi,t}^{q,r',d}(\rn)}\lesssim
\lf\|L|_{(HE_\Phi^q)_t(\rn)}\r\|_{((HE_\Phi^q)_t(\rn))^*}\lesssim
\|L\|_{((hE_\Phi^{q,r,d})_t(\rn))^*}.
$$
This, combined with
\eqref{7} and the definition of $\mathcal{L}_{\Phi,t,\loc}^{q,r',d}(\rn)$,
implies that $g\in\mathcal{L}_{\Phi,t,\loc}^{q,r',d}(\rn)$ and
$$
\|g\|_{\mathcal{L}_{\Phi,t,\loc}^{q,r',d}(\rn)}\lesssim
\|L\|_{((hE_\Phi^{q,r,d})_t(\rn))^*},
$$
which completes the proof of (ii) and hence of Theorem \ref{dual2}.
\end{proof}

\begin{remark}
Let $t=1$, $q\in(0,\infty)$ and $\Phi(\tau):=\tau^q$ for any $\tau\in[0,\infty)$.
Then $(hE_\Phi^q)_t(\rn)$ and $(E_\Phi^q)_t(\rn)$ become, respectively, the classical
local Hardy space $h^q(\rn)$ and Lebesgue space $L^q(\rn)$.
When $q=1$, we have $(hE_\Phi^q)_t(\rn)=h^1(\rn)$ and, moreover,
$\mathcal{L}_{\Phi,t,\loc}^{q,r,0}(\rn)$ with
$r\in[1,\infty)$ coincides with the classical local {\rm BMO} space
$\bmo(\rn)$ which was introduced by Goldberg in \cite{g}.
\end{remark}

\section{Sharpness of bilinear decompositions}\label{s5}

In this section, we
prove that the
bilinear decomposition
\eqref{cao1}
is sharp and the bilinear decomposition
\eqref{cao2} is not sharp
(see Remark \ref{mm} below).
These two results of bilinear decomposition were proved in \cite{cky1}.

Now, we recall the notions of both the local Hardy-type space $h_*^\Phi(\rn)$
introduced in \cite{bf} and the local Musielak--Orlicz Hardy
space $h^{\log}(\rn)$ in \cite{yy}.

\begin{definition}\label{cjs}
For any $\tau\in[0,\infty)$, let
\begin{equation}\label{41}
\Phi(\tau):=\frac{\tau}{\log(e+\tau)}.
\end{equation}
\begin{itemize}
\item[(i)] The \emph{variant Orlicz space} $L_*^\Phi(\rn)$ is defined to be the set of all measurable functions $f$
such that $$\|f\|_{L_*^\Phi(\rn)}
:=\sum_{k\in\zz^n}\|f\mathbf1_{Q_k}\|_{L^{\Phi}(\rn)}<\infty,$$
where $Q_{k}:=k+[0,1)^n$ for any $k\in\mathbb{Z}^n$.
\item[(ii)] The \emph{variant local Orlicz Hardy space} $h_*^\Phi(\rn)$ is defined
by setting
 $$h_*^\Phi(\rn):=\lf\{f\in\mathcal{S}'(\rn):\ \|f\|_{h_*^\Phi(\rn)}:=
\|m_b^{**}(f,\varphi)\|_{L_*^\Phi(\rn)}<\infty \r\},$$
where $\varphi\in\mathcal{S}(\rn)$ satisfies $\int_{\rn}\varphi(x)\,dx\neq0$ and $m_b^{**}(f,\varphi)$ is as in (\ref{51}) with $b\in(2n,\infty)$.
\item[(iii)] The \emph{local Orlicz Hardy space}
$h^\Phi(\rn)$ is defined via replacing $L_*^\Phi(\rn)$ in (ii) by $L^\Phi(\rn)$.
\item[(iv)] The \emph{local Hardy space}
$h^1(\rn)$ is defined via replacing $L_*^\Phi(\rn)$ in (ii) by $L^1(\rn)$.
\end{itemize}
\end{definition}

\begin{definition}\label{mshs}
Let $\theta$ be as in \eqref{2221}.
\begin{itemize}
\item[(i)] The \emph{Musielak--Orlicz space} $L^\theta(\rn)$
is defined to be the set of all measurable functions $f$
such that $$\|f\|_{L^\theta(\rn)}
:=\inf\lf\{\lambda\in(0,\infty):\ \int_{\rn}\theta
\lf(x,\,\frac{|f(x)|}{\lambda}\r)\,dx\leq1 \r\}<\infty.$$
\item[(ii)] The \emph{local Musielak--Orlicz Hardy
space} $h^{\log}(\rn)$ is defined
by setting
\begin{align*}
h^{\log}(\rn):
&=\lf\{
f\in\mathcal{S}'(\rn):\ \|m_b^{**}(f,\varphi)\|_{L^{\theta}(\rn)}<\infty
\r\},
\end{align*}
where
$\varphi\in\mathcal{S}(\rn)$ satisfies$\int_{\rn}\varphi(x)\,dx\neq0$
and $m_b^{**}(f,\varphi)$ is as in (\ref{51}) with $b\in(2n,\infty)$.
\end{itemize}
\end{definition}

To prove that the bilinear decomposition
\eqref{cao2} is not sharp, we need the following very useful
technical lemma.

\begin{lemma}\label{po}
Let $\theta$ be as in \eqref{2221} and $\Phi$ be as in \eqref{41}.
Then $L_*^\Phi(\rn)\subset L^\theta(\rn)$ and there exists a positive constant $C$ such that,
for any $f\in L_*^\Phi(\rn)$,
$$
\lf\|f\r\|_{L^\theta(\rn)}\le C\lf\|f\r\|_{L_*^\Phi(\rn)}.
$$
\end{lemma}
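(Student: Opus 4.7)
The plan is to produce an absolute constant $M>0$ such that, for $\lambda:=\|f\|_{L_*^\Phi(\rn)}=\sum_{k\in\zz^n}\lambda_k$ with $\lambda_k:=\|f\mathbf1_{Q_k}\|_{L^\Phi(\rn)}$ (assumed finite and positive, the other cases being trivial), one has $\int_\rn\theta(x,|f(x)|/(M\lambda))\,dx\le 1$, which directly yields the asserted inclusion and norm inequality. The starting point is a pointwise scaling estimate: from the elementary inequality $(e+\tau)/(e+s\tau)\le 1/s$ for $s\in(0,1]$ (equivalently $se\le e$), a direct manipulation of the definition of $\theta$ gives
\begin{equation}\label{Eqthsc}
\theta(x,s\tau)\le s\lf[1+\frac{\log(1/s)}{\log(e+|x|)}\r]\theta(x,\tau),\quad x\in\rn,\ \tau\ge 0,\ s\in(0,1].
\end{equation}
Combining \eqref{Eqthsc} with the trivial pointwise bound $\theta(x,\cdot)\le\Phi$, and applying it with $s=s_k:=\lambda_k/\lambda$ and $\tau=|f|/\lambda_k$, while noting $\log(e+|x|)\ge L_k:=\inf_{x\in Q_k}\log(e+|x|)\ge 1$, yields
$$
\int_{Q_k}\theta(x,|f|/\lambda)\,dx\le s_k\lf[1+\frac{\log(1/s_k)}{L_k}\r]\int_{Q_k}\Phi(|f|/\lambda_k)\,dx\le s_k\lf[1+\frac{\log(1/s_k)}{L_k}\r].
$$

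Summing over $k$ reduces the task to bounding $E:=\sum_{k\in\zz^n}u_k\log(1/u_k)/L_k$ (with $u_k:=s_k$, so $\sum_k u_k=1$) by a universal constant. This is the main obstacle: the unweighted entropy $\sum_k u_k\log(1/u_k)$ can be arbitrarily large, so the weight $1/L_k\sim 1/\log|k|$ must be exploited through a dyadic-annular decomposition. Partition $\zz^n$ into $A_j:=\{k\in\zz^n:2^{j-1}\le|k|<2^j\}$ for $j\ge 1$ and $A_0:=\{0\}$. For $j$ sufficiently large that $|k|-\sqrt n\ge|k|/2$ on $A_j$, one has $|A_j|\lesssim 2^{jn}$ and $L_k\gtrsim j$; the standard concavity bound $\sum_{k\in A_j}u_k\log(1/u_k)\le U_j\log(|A_j|/U_j)$, where $U_j:=\sum_{k\in A_j}u_k$, then yields
$$
\sum_{k\in A_j}\frac{u_k\log(1/u_k)}{L_k}\lesssim U_j+\frac{U_j\log(1/U_j)}{j}.
$$
The contribution of $A_0$ and of the finitely many small $j$ is bounded outright. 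Summing the first tail over $j\ge 1$ gives $\sum_j U_j\le 1$; for the second, the elementary inequality $\log(1/U_j)\le U_j^{-\varepsilon}/\varepsilon$ combined with Hölder's inequality on conjugate exponents $1/(1-\varepsilon)$ and $1/\varepsilon$ (any $\varepsilon\in(0,1)$) produces
$$
\sum_{j\ge 1}\frac{U_j\log(1/U_j)}{j}\le\frac{1}{\varepsilon}\lf(\sum_{j\ge 1}U_j\r)^{1-\varepsilon}\lf(\sum_{j\ge 1}j^{-1/\varepsilon}\r)^{\varepsilon}<\infty,
$$
since $1/\varepsilon>1$. These pieces combine to give $\int_\rn\theta(x,|f|/\lambda)\,dx\le C_0$ for an absolute constant $C_0$.

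The final step is a rescaling. A second application of \eqref{Eqthsc} with $s=1/M$, $M\ge 1$, and the bound $\log(e+|x|)\ge 1$ gives
$$
\int_\rn\theta(x,|f|/(M\lambda))\,dx\le\frac{1+\log M}{M}\int_\rn\theta(x,|f|/\lambda)\,dx\le\frac{C_0(1+\log M)}{M},
$$
which is at most $1$ as soon as $M$ is chosen sufficiently large. The conclusion $\|f\|_{L^\theta(\rn)}\le M\lambda=M\|f\|_{L_*^\Phi(\rn)}$ follows, establishing the lemma; the only delicate ingredient is the dyadic-annular control of the weighted entropy, which absorbs the a priori unbounded quantity $\sum_k u_k\log(1/u_k)$ into a universal constant by balancing it against the decay of $1/L_k$.
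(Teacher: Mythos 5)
Your proposal is correct, but it takes a genuinely different route from the paper's. The paper normalizes $\|f\|_{L_*^\Phi(\rn)}=1$, splits $\zz^n$ into the finitely many cubes with $|k|\le 2\sqrt n$ and the remaining ones, and on each far cube renormalizes the modular by $\|f\mathbf 1_{Q_k}\|_{L^\Phi(\rn)}+|k/2|^{-(n+1)}$: since $|x|>|k|/2$ on $Q_k$, the resulting ratio of logarithms is bounded by a constant pointwise, so the modular $\int_{\rn}\theta(x,|f(x)|)\,dx$ is controlled by $\sum_{k}\|f\mathbf 1_{Q_k}\|_{L^\Phi(\rn)}+\sum_{|k|>2\sqrt n}|k|^{-(n+1)}\lesssim 1$, and the norm inequality then follows from \cite[Lemma 1.1.11(i)]{ylk}. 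You instead prove the pointwise scaling inequality $\theta(x,s\tau)\le s[1+\log(1/s)/\log(e+|x|)]\theta(x,\tau)$ (which is valid, by the observation $\log(e+\tau)\le\log(e+s\tau)+\log(1/s)$), keep the exact local norms $\lambda_k$, and are consequently forced to control the weighted entropy $\sum_k u_k\log(1/u_k)/L_k$, which you do correctly via dyadic annuli, the bound $\sum_{k\in A_j}u_k\log(1/u_k)\le U_j\log(|A_j|/U_j)$, and H\"older's inequality; your final rescaling with $s=1/M$ replaces the appeal to the external lemma. Both arguments exploit the same phenomenon, namely that the factor $\log(e+|x|)$ in the denominator of $\theta$ compensates on far cubes for the smallness of the local norm relative to the global one, but the paper absorbs the loss by an additive, polynomially decaying perturbation of the normalizer, which keeps every per-cube estimate trivial and the proof short, while you absorb it globally by an entropy/H\"older argument, which is more delicate but self-contained and makes explicit how the distribution of the $\lambda_k$ enters. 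Two small points are left implicit in your write-up and are standard: the estimate $\int_{Q_k}\Phi(|f(x)|/\lambda_k)\,dx\le1$ uses the Fatou-type unit-ball property of the Luxemburg norm (the paper sidesteps this by choosing normalizers strictly larger than the norm), and the degenerate terms with $\lambda_k=0$ or $U_j=0$ should simply be discarded.
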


\begin{proof}
Let $f\in L_*^\Phi(\rn)$.
By the homogeneity of both $\|\cdot\|_{L_*^\Phi(\rn)}$ and $\|\cdot\|_{L^\theta(\rn)}$,
without loss of generality,
we may assume that $\|f\|_{L_*^\Phi(\rn)}=1$.
To show this lemma, by \cite[Lemma 1.1.11(i)]{ylk}, we know that it suffices to prove that
\begin{align}\label{eqz}
\int_{\rn}\theta
\lf(x,\,|f(x)|\r)\,dx\lesssim1.
\end{align}
We first write
\begin{align}\label{eq611}
\int_{\rn}\theta
\lf(x,\,|f(x)|\r)\,dx&=\sum_{k\in\zz^n}\int_{Q_k}\theta
\lf(x,\,|f(x)|\r)\,dx\\\noz
&=
\sum_{k\in\zz^n,|k|\le2\sqrt{n}}\int_{Q_k}\theta
\lf(x,\,|f(x)|\r)\,dx
+\sum_{k\in\zz^n,|k|>2\sqrt{n}}\cdots
=:\mathrm{I}_1+\mathrm{I}_2,
\end{align}
where $Q_{k}:=k+[0,1)^n$ for any $k\in\mathbb{Z}^n$.

We first deal with $\mathrm{I}_1$. By Definition \ref{d1.2}, we conclude that
\begin{align}\label{eq6}
\mathrm{I}_1&=
\sum_{k\in\zz^n,|k|\le2\sqrt{n}}\int_{Q_k}\theta
\lf(x,\,|f(x)|\r)\,dx
=\sum_{k\in\zz^n,|k|\le2\sqrt{n}}\int_{Q_k}\frac{|f(x)|}{\log(e+|x|)+\log(e+|f(x)|)}\,dx\\\noz
&=\sum_{k\in\zz^n,|k|\le2\sqrt{n}}[\|f\mathbf1_{Q_k}\|_{L^\Phi(\rn)}+1]
\int_{Q_k}\frac{|f(x)|/[\|f\mathbf1_{Q_k}\|_{L^\Phi(\rn)}+1]}{\log(e+|f(x)|/[\|f\mathbf1_{Q_k}\|_{L^\Phi(\rn)}+1])}\\\noz
&\quad\times\frac{\log(e+|f(x)|/[\|f\mathbf1_{Q_k}\|_{L^\Phi(\rn)}+1])}{\log(e+|x|)+\log(e+|f(x)|)}\,dx\\\noz
&\le\sum_{k\in\zz^n,|k|\le 2\sqrt{n}}[\|f\mathbf1_{Q_k}\|_{L^\Phi(\rn)}+1]
\int_{Q_k}\frac{|f(x)|/[\|f\mathbf1_{Q_k}\|_{L^\Phi(\rn)}+1]}{\log(e+|f(x)|/[\|f\mathbf1_{Q_k}\|_{L^\Phi(\rn)}+1])}\,dx\\\noz
&=\sum_{k\in\zz^n,|k|\le 2\sqrt{n}}[\|f\mathbf1_{Q_k}\|_{L^\Phi(\rn)}+1]
\int_{Q_k}\Phi\lf(\frac{|f(x)|}{\|f\mathbf1_{Q_k}\|
_{L^\Phi(\rn)}+1}\r)\,dx\\\noz
&\lesssim1+\sum_{k\in\zz^n}\|f\mathbf1_{Q_k}\|_{L^\Phi(\rn)}\lesssim1.
\end{align}
As for $\mathrm{I_{2}}$, observe that, for any $k\in\zz^n,|k|>2\sqrt{n}$ and $x\in Q_k$,
$
|x|>|k|-\sqrt{n}>\frac{1}{2}|k|.
$
From this and Definition \ref{d1.2}, if follows that
\begin{align*}
\mathrm{I}_2&
=(n+1)\sum_{k\in\zz^n,|k|>2\sqrt{n}}\int_{Q_k}\frac{|f(x)|}{(n+1)[\log(e+|x|)+\log(e+|f(x)|)]}\,dx\\\noz
&\sim\sum_{k\in\zz^n,|k|>2\sqrt{n}}\lf[\|f\mathbf1_{Q_k}\|_{L^\Phi(\rn)}+|k/2|^{-(n+1)}\r]
\int_{Q_k}\frac{|f(x)|/[\|f\mathbf1_{Q_k}\|_{L^\Phi(\rn)}+|k/2|^{-(n+1)}]}
{\log(e+|f(x)|/[\|f\mathbf1_{Q_k}\|_{L^\Phi(\rn)}+|k/2|^{-(n+1)}])}\\\noz
&\quad\times\frac{\log(e+|f(x)|/[\|f\mathbf1_{Q_k}\|_{L^\Phi(\rn)}+|k/2|^{-(n+1)}])}{\log(e+|x|^{n+1})+\log(e+|f(x)|)}\,dx\\\noz
&\lesssim\sum_{k\in\zz^n,|k|>2\sqrt{n}}\lf[\|f\mathbf1_{Q_k}\|_{L^\Phi(\rn)}+|k/2|^{-(n+1)}\r]
\int_{Q_k}\frac{|f(x)|/[\|f\mathbf1_{Q_k}\|_{L^\Phi(\rn)}+|k/2|^{-(n+1)}]}
{\log(e+|f(x)|/[\|f\mathbf1_{Q_k}\|_{L^\Phi(\rn)}+|k/2|^{-(n+1)}])}\\\noz
&\quad\times\frac{\log(e+|k/2|^{n+1}|f(x)|)}{\log(e+|x|^{n+1}|f(x)|)}\,dx\\\noz
&\lesssim\sum_{k\in\zz^n,|k|>2\sqrt{n}}
\lf[\|f\mathbf1_{Q_k}\|_{L^\Phi(\rn)}+|k/2|^{-(n+1)}\r]
\int_{Q_k}\Phi\lf(\frac{|f(x)|}{\|f\mathbf1_{Q_k}\|
_{L^\Phi(\rn)}+|k/2|^{-(n+1)}}\r)\,dx\\\noz
&\le\sum_{k\in\zz^n,|k|>2\sqrt{n}}
\lf[\|f\mathbf1_{Q_k}\|_{L^\Phi(\rn)}+|k/2|^{-(n+1)}\r]
\lesssim1+\sum_{k\in\zz^n,|k|>2\sqrt{n}}|k|^{-(n+1)}
\lesssim1,
\end{align*}
which, combined with \eqref{eq611} and \eqref{eq6}, further implies that \eqref{eqz} holds true.
This finishes the proof of Lemma \ref{po}.
\end{proof}

\begin{lemma}\label{888}
Let $\Phi$ be as in \eqref{41}. Then, for any ball
$B\subset\rn$,
$$
\lf\|\mathbf1_B\r\|_{L_*^\Phi(\rn)}\sim\frac{|B|}{\log(e+\frac{1}{|B|})}\sim\|\mathbf1_B\|_{L^\Phi(\rn)},
$$
where the positive equivalence constants are independent of $B$.
\end{lemma}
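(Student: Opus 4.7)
The plan is to compute each of the three quantities from first principles and match them pairwise. From Definition~\ref{d1.2} and $\Phi(\tau)=\tau/\log(e+\tau)$, for any measurable set $E\subset\rn$ with $0<|E|<\fz$,
\[
\|\mathbf1_E\|_{L^\Phi(\rn)}=\inf\lf\{\lambda>0:|E|\Phi(1/\lambda)\le 1\r\},
\]
which reduces to solving the implicit equation $\lambda\log(e+1/\lambda)=|E|$. Splitting into the regimes $|E|\le 1$ (where $\lambda$ is small, so $\log(e+1/\lambda)\sim\log(e+1/|E|)$, a routine two-sided logarithmic estimate verified by plugging in the ansatz $\lambda=c|E|/\log(e+1/|E|)$) and $|E|>1$ (where $\lambda\sim|E|$ and $\log(e+1/|E|)\sim 1$), both cases yield
\[
\|\mathbf1_E\|_{L^\Phi(\rn)}\sim\frac{|E|}{\log(e+1/|E|)}=:f(|E|).
\]
Taking $E=B$ gives the second equivalence of the lemma; moreover, $f$ is strictly increasing on $(0,\fz)$ by a one-line derivative check.

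For the slice-type norm I would use this formula term by term to reduce
\[
\|\mathbf1_B\|_{L_*^\Phi(\rn)}=\sum_{k\in\zz^n}\|\mathbf1_{B\cap Q_k}\|_{L^\Phi(\rn)}\sim\sum_{k\in\zz^n}f(|B\cap Q_k|),
\]
and then split into cases according to the size of $|B|$.

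When $|B|\le 1$, the diameter of $B$ is bounded by a dimensional constant, so $B$ meets at most $N_n$ cubes $Q_k$, a number depending only on $n$. Monotonicity of $f$ gives $f(|B\cap Q_k|)\le f(|B|)$ for each $k$, whence the upper bound by summing. For the lower bound, pigeonhole supplies some $Q_{k_0}$ with $|B\cap Q_{k_0}|\ge|B|/N_n$; the elementary comparison $\log(e+N_n/t)\lesssim\log(e+1/t)$ for $t\in(0,1]$ then gives $f(|B\cap Q_{k_0}|)\gtrsim f(|B|)$, so the single term already dominates the target up to constants.

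When $|B|>1$, one has $f(|B|)\sim|B|$ and $f(1)\sim 1$. The upper bound is pure volume counting: the cubes meeting $B$ all lie in a concentric ball of comparable radius, hence are $\lesssim|B|$ in number, and each contributes $\le f(1)\sim 1$. For the lower bound, the boundary layer of $B$ of width $\sqrt n$ has measure $\lesssim|B|^{(n-1)/n}$, which is small compared to $|B|$, so $\sim|B|$ cubes $Q_k$ lie entirely inside $B$; each such interior cube contributes exactly $f(1)\sim 1$, so the sum is $\gtrsim|B|$. The main technical subtlety is the small-ball case, where one must not lose the logarithmic factor when collecting the $O_n(1)$ summands or isolating the largest one; monotonicity of $f$ together with the uniform comparison $\log(e+c/t)\sim_c\log(e+1/t)$ for $t\in(0,1]$ is what makes this harmless.
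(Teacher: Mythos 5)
Your strategy is sound and genuinely different from the paper's: you re-derive the Orlicz-norm formula $\|\mathbf1_E\|_{L^\Phi(\rn)}\sim|E|/\log(e+1/|E|)$ for arbitrary measurable sets $E$ directly from Definition \ref{d1.2} (the paper instead quotes this for balls, i.e.\ \eqref{222}, from \cite[Lemma 7.13]{yy2}), and you then estimate $\|\mathbf1_B\|_{L_*^\Phi(\rn)}=\sum_{k\in\zz^n}\|\mathbf1_{B\cap Q_k}\|_{L^\Phi(\rn)}$ straight from the lattice definition, whereas the paper passes through the identification $L_*^\Phi(\rn)=(E_\Phi^1)_1(\rn)$ of Remark \ref{ReHpq}(ii) and estimates $\int_{\rn}\|\mathbf1_{B(y,1)}\mathbf1_B\|_{L^\Phi(\rn)}\,dy$ in the two cases $r_B\ge\frac12$ and $r_B<\frac12$. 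Your computation of the implicit equation, the monotonicity of $f$, the whole case $|B|\le1$ (bounded number of cubes, pigeonhole, and $\log(e+N/t)\lesssim_N\log(e+1/t)$), and the upper bound for $|B|>1$ are all correct, and your version is more self-contained than the paper's.

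There is, however, one step that fails as written: in the lower bound for $|B|>1$ you claim that, because the boundary layer of width $\sqrt n$ has measure $\lesssim|B|^{(n-1)/n}$, ``$\sim|B|$ cubes $Q_k$ lie entirely inside $B$''. This breaks down in the intermediate regime $1<|B|\le M_n$ with $M_n$ a dimensional constant: a unit cube has circumradius $\sqrt n/2$, so if $r_B<\sqrt n/2$ no unit cube at all (lattice or not) is contained in $B$, yet $|B|>1$ is perfectly possible (for $n=2$, any $r_B\in(\pi^{-1/2},2^{-1/2})$); the assertion that $|B|^{(n-1)/n}$ is ``small compared to $|B|$'' only holds once $|B|$ exceeds a large constant depending on $n$, since all implicit constants here are dimensional. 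Thus the interior-cube count can be zero while the target is $\gtrsim|B|\sim1$. The gap is easy to close with tools already in your proof: for $1<|B|\le M_n$ the ball meets only boundedly many cubes, so pigeonhole gives some $Q_{k_0}$ with $|B\cap Q_{k_0}|\gtrsim_n1$ and hence $f(|B\cap Q_{k_0}|)\gtrsim_n1\sim|B|/\log(e+1/|B|)$; for $|B|>M_n$ your argument is valid because every point of $B$ at distance greater than $\sqrt n$ from $\partial B$ lies in a lattice cube contained in $B$, and the inner ball $B(x_B,r_B-\sqrt n)$ has measure at least $\frac12|B|$ once $r_B$ exceeds a dimensional constant, so at least $\frac12|B|$ interior cubes each contribute $f(1)\sim1$. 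With this repair the proof is complete.
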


\begin{proof}
For any ball $B\subset\rn$, we denote by $x_B$ its center and by $r_B$ its radius.
Recall that the following equivalence
\begin{equation}\label{222}
\|\mathbf1_B\|_{L^\Phi(\rn)}\sim\frac{|B|}{\log(e+\frac{1}{|B|})}
\end{equation}
was established in \cite[Lemma 7.13]{yy2}.
To estimate $\|\mathbf1_B\|_{L_*^\Phi(\rn)}$,
we consider the following two cases.

If $r_B\in[\frac{1}{2},\infty)$, then, by Remark \ref{ReHpq}(ii) and \eqref{222}, we have
\begin{align*}
\|\mathbf1_B\|_{L_*^\Phi(\rn)}&\sim\|\mathbf1_B\|_{(E_\Phi^1)_1(\rn)}
\sim\int_{\rn}\|\mathbf1_{B(y,1)}\mathbf1_{B}\|_{L^\Phi(\rn)}\,dy
\sim\int_{B(x_B,3r_B)}\|\mathbf1_{B(y,1)}\mathbf1_{B}\|_{L^\Phi(\rn)}\,dy\\\noz
&\lesssim|B(x_B,3r_B)|
\sim\frac{|B|}{\log(e+\frac{1}{|B|})}
\end{align*}
and
\begin{align*}
\lf\|\mathbf1_B\r\|_{L_*^\Phi(\rn)}&\sim\lf\|\mathbf1_B\r\|_{(E_\Phi^1)_1(\rn)}
\sim\int_{\rn}\lf\|\mathbf1_{B(y,1)}\mathbf1_{B}\r\|_{L^\Phi(\rn)}\,dy
\gtrsim\int_{B(x_B,\frac12r_B)}\lf\|\mathbf1_{B(y,\frac14)}\r\|_{L^\Phi(\rn)}\,dy\\\noz
&\sim\lf|B\lf(x_B,\frac12r_B\r)\r|
\sim\frac{|B|}{\log(e+\frac{1}{|B|})}.
\end{align*}
Thus, $\lf\|\mathbf1_B\r\|_{L_*^\Phi(\rn)}\sim\frac{|B|}{\log(e+\frac{1}{|B|})}$
when $r_B\in[\frac{1}{2},\infty)$.

If $r_B\in(0,\frac{1}{2})$, then, by \eqref{222}, we obtain
$$
\|\mathbf1_B\|_{L_*^\Phi(\rn)}\sim\int_{\rn}\|\mathbf1_{B(y,1)}\mathbf1_{B}\|_{L^\Phi(\rn)}\,dy
\lesssim\int_{B(x_B,2)}\|\mathbf1_{B}\|_{L^\Phi(\rn)}\,dy
\sim\frac{|B|}{\log(e+\frac{1}{|B|})}
$$
and
$$
\|\mathbf1_B\|_{L_*^\Phi(\rn)}\sim\int_{\rn}\|\mathbf1_{B(y,1)}\mathbf1_{B}\|_{L^\Phi(\rn)}\,dy
\gtrsim\int_{B(x_B,\frac12)}\|\mathbf1_{B}\|_{L^\Phi(\rn)}\,dy
\sim\frac{|B|}{\log(e+\frac{1}{|B|})}.
$$
Thus, $\lf\|\mathbf1_B\r\|_{L_*^\Phi(\rn)}\sim\frac{|B|}{\log(e+\frac{1}{|B|})}$
also holds true when $r_B\in(0,\frac{1}{2})$, which completes the proof of Lemma \ref{888}.
\end{proof}

We now recall some notions about
local BMO spaces in \cite{yy2}.

\begin{definition}\label{bmo1}
\begin{itemize}
\item[(i)] The \emph{local {\rm BMO} space} $\bmo(\rn)$
is defined to be the set of all measurable functions $f$ such that
\begin{equation*}
\|f\|_{\bmo(\rn)}:=\sup_{\mathrm{cube\,}Q\subset\rn,\ell(Q)<1}
\frac{1}{|Q|}\int_{Q}|f(x)-f_Q|\,dx
+\sup_{\mathrm{cube\,}Q\subset\rn,\ell(Q)\geq1}\frac{1}{|Q|}\int_{Q}|f(x)|\,dx<\infty,
\end{equation*}
where $f_Q:=\frac{1}{|Q|}\int_{Q}f(x)\,dx$ for any cube $Q\subset\rn$.
\item[(ii)]
Let $\Phi$ be as in \eqref{41}.
The \emph{local {\rm BMO}-type space ${\bmo}^{\Phi}(\rn)$} is defined
to be the set of all measurable functions $f\in L_{\loc}^1(\rn)$ such that
\begin{align*}
\|f\|_{{\bmo}^{\Phi}(\rn)}:&=
\sup_{\mathrm{cube\,}Q\subset\rn,\ell(Q)<1}
\frac{\log(e+\frac{1}{|Q|})}{|Q|}\int_{Q}|f(x)-f_Q|\,dx\\\noz
&\quad+\sup_{\mathrm{cube\,}Q\subset\rn,\ell(Q)\geq1}
\frac{\log(e+\frac{1}{|Q|})}{|Q|}
\int_{Q}|f(x)|\,dx
<\infty,
\end{align*}
where $f_Q:=\frac{1}{|Q|}\int_{Q}f(x)\,dx$ for any cube $Q\subset\rn$.
\item[(iii)]
Let $\theta$ be as in \eqref{2221}.
The \emph{local {\rm BMO}-type space} $\bmo^{\log}(\rn)$ is defined
to be the set of all measurable functions $f\in L_{\loc}^1(\rn)$ such that
\begin{align*}
\|f\|_{\bmo^{\log}(\rn)}:&=
\sup_{\mathrm{cube\,}Q\subset\rn,\ell(Q)<1}
\frac{\log(e+\frac{1}{|Q|})+\sup_{x\in Q}[\log(e+|x|)]}{|Q|}\int_{Q}|f(x)-f_Q|\,dx\\
&\quad+\sup_{\mathrm{cube\,}Q\subset\rn,\ell(Q)\geq1}
\frac{\log(e+\frac{1}{|Q|})+\sup_{x\in Q}[\log(e+|x|)]}{|Q|}
\int_{Q}|f(x)|\,dx
<\infty,
\end{align*}
where $f_Q:=\frac{1}{|Q|}\int_{Q}f(x)\,dx$ for any cube $Q\subset\rn$.
\end{itemize}
\end{definition}

Recall that a function $g$ on $\rn$ is called a
\emph{pointwise multiplier on} ${\bmo}^{\Phi}(\rn)$ if the pointwise
multiplication $fg$ belongs to ${\bmo}^{\Phi}(\rn)$ for any $f\in{\bmo}^{\Phi}(\rn)$.

\begin{theorem}\label{cheng}
Let $\Phi$ be as in \eqref{41}.
\begin{itemize}
\item[\rm(i)]
The dual space of $h_*^\Phi(\rn)$, denoted by $(h_*^\Phi(\rn))^*$,
is ${\bmo}^{\Phi}(\rn)$.
\item[\rm(ii)]
The dual space of $h^{\log}(\rn)$, denoted by $(h^{\log}(\rn))^*$,
is $\bmo^{\log}(\rn)$.
\item[\rm(iii)] The class of pointwise multipliers of $\bmo(\rn)$ is
$L^\infty(\rn)\cap (h_*^\Phi(\rn))^*$.
\item[\rm(iv)] $\bmo^{\log}(\rn)\subsetneqq{\bmo}^{\Phi}(\rn)$
and $L^\infty(\rn)\cap\bmo^{\log}(\rn)\subsetneqq L^\infty(\rn)\cap{\bmo}^{\Phi}(\rn)$.
\item[\rm(v)] $h_*^\Phi(\rn)\subsetneqq h^{\log}(\rn)$ and hence $L_*^\Phi(\rn)\subsetneqq L^\theta(\rn)$.
\end{itemize}
\end{theorem}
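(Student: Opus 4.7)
The plan is to deduce (i) and (ii) from the general dual theorem and the analogous Musielak--Orlicz result, and then to combine these with Lemma \ref{po} and an explicit counterexample to get (iii)--(v).

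For (i), I invoke Theorem \ref{dual2} with $t=q=1$, $r=\fz$ (so $r'=1$), and $d=0$. A direct check shows that $\Phi(\tau)=\tau/\log(e+\tau)$ is of upper type $p_\Phi^+=1$ and of positive lower type $p_\Phi^-$ for every $p_\Phi^-\in(0,1)$, so one can pick $s\in(n/(n+1),p_\Phi^-)$, forcing $\lfloor n(1/s-1)\rfloor=0=d$. Together with Remark \ref{ReHpq}(ii), this yields $(h_*^\Phi(\rn))^*=\mathcal{L}_{\Phi,1,\loc}^{1,1,0}(\rn)$. Since the degree-zero minimizing polynomial $P_Q^0 g$ coincides with the average $g_Q$, and Lemma \ref{888} gives $|Q|/\|\mathbf1_Q\|_{(E_\Phi^1)_1(\rn)}\sim\log(e+1/|Q|)$, the Campanato quantity collapses on both small and large cubes to the ${\bmo}^\Phi(\rn)$-norm of Definition \ref{bmo1}(ii). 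Part (ii) is analogous: the Musielak--Orlicz weight $\theta$ from \eqref{2221} gives $\|\mathbf1_Q\|_{L^\theta(\rn)}\sim|Q|/[\log(e+1/|Q|)+\sup_{x\in Q}\log(e+|x|)]$, and the standard dual identification for local Musielak--Orlicz Hardy spaces developed in \cite{yy2} produces $(h^{\log}(\rn))^*=\bmo^{\log}(\rn)$, which I would invoke directly.

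Part (iv) is essentially definitional: the pointwise bound $\log(e+1/|Q|)\le\log(e+1/|Q|)+\sup_{x\in Q}\log(e+|x|)$ gives the continuous inclusion $\bmo^{\log}(\rn)\subset{\bmo}^\Phi(\rn)$, while $g\equiv1$ has ${\bmo}^\Phi$-norm bounded by $\log(e+1)$ (the small-cube supremum vanishes and for large cubes $|Q|\ge1$ forces $\log(e+1/|Q|)\le\log(e+1)$) but has infinite $\bmo^{\log}$-norm since on large cubes centred far from the origin the term $\sup_{x\in Q}\log(e+|x|)$ diverges; this single witness establishes both strict inclusions. Part (v) then combines Lemma \ref{po}, which feeds the same Peetre-type maximal function through the continuous Orlicz/Musielak--Orlicz embedding to give $h_*^\Phi(\rn)\hookrightarrow h^{\log}(\rn)$ continuously, with the duality identifications of (i) and (ii): equality would force $(h^{\log}(\rn))^*=(h_*^\Phi(\rn))^*$, i.e.\ $\bmo^{\log}(\rn)={\bmo}^\Phi(\rn)$, contradicting (iv). The assertion $L_*^\Phi(\rn)\subsetneqq L^\theta(\rn)$ is then Lemma \ref{po} together with a concrete counterexample constructed so that the spatial weight $[\log(e+|x|)]^{-1}$ inside $\theta$ tames the $L^\theta$-norm while the $\ell^1$-summation of $L^\Phi$-norms over unit cubes at infinity diverges.

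For (iii), I would follow the Nakai--Yabuta strategy adapted to local $\bmo$. For the easy inclusion $L^\fz(\rn)\cap(h_*^\Phi(\rn))^*\subset\mathrm{Mult}(\bmo(\rn))$, I pair $fg$ (with $f\in\bmo(\rn)$ and $g\in L^\fz(\rn)\cap(h_*^\Phi(\rn))^*$) against an $h^1$-atom via the bilinear decomposition \eqref{cao1}, bounding the $L^1$-piece with $\|g\|_{L^\fz(\rn)}$ and the $h_*^\Phi$-piece through the dual pairing from (i). The converse inclusion is proved by testing the multiplier $g$ against judiciously chosen local $((E_\Phi^1)_1(\rn),\fz,0)$-atoms (available through the finite atomic decomposition of Theorem \ref{finite}) so as to separately read off $\|g\|_{L^\fz(\rn)}$ and, via (i), the ${\bmo}^\Phi(\rn)$-norm. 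The main obstacle is the clean matching in (i) of the Campanato quantity with ${\bmo}^\Phi(\rn)$---it hinges on the lower-type analysis of $\tau/\log(e+\tau)$ admitting the choice $d=0$ and on Lemma \ref{888} supplying the correct logarithmic weight $\log(e+1/|Q|)$---together with, for (iii), the careful atomic-testing step producing both the $L^\fz$-bound and the ${\bmo}^\Phi$-bound for $g$ out of a single construction.
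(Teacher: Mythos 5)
Your treatment of (i), (ii) and (iv) is essentially the paper's: (i) is exactly Theorem \ref{dual2} with $t=q=1$, $r=\infty$, $d=0$, combined with Lemma \ref{888} and Definition \ref{bmo1} (your lower-type bookkeeping for $\tau/\log(e+\tau)$ and the identification $P^0_Qg=g_Q$ are fine), (ii) is a citation to the known local Musielak--Orlicz duality (the paper cites the proofs in Ky's paper and in \cite{yy}), and (iv) uses the same witness $g\equiv 1$. The differences, and the gaps, are in (iii) and (v).

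For (iii), the paper does not reprove a Nakai--Yabuta theorem: it quotes \cite[Theorem 7.9(ii)]{yy2}, which already characterizes the pointwise multipliers of $\bmo(\rn)$ as $L^\infty(\rn)\cap(h^\Phi(\rn))^*$, and then only identifies $(h^\Phi(\rn))^*$ with $(h_*^\Phi(\rn))^*$ via (i) and \cite[Lemma 7.13, Corollary 7.6]{yy2}. Your plan proves the easy inclusion plausibly (this is the same pairing argument as in Theorem \ref{449}, though you should note that the ``Moreover'' identity in Lemma \ref{mainthm3}(i) for test functions $\varphi\in L^\infty(\rn)\cap\bmo^\Phi(\rn)$ is itself justified in the paper only after (iii), so you must either import it from \cite{cky1} or argue it independently to avoid circularity). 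The converse inclusion, however, is the hard half and your sketch does not carry it: knowing only that $g$ multiplies $\bmo(\rn)$ into $\bmo(\rn)$, there is no a priori finiteness of $\sup_a|\int ag|$ over local $((E_\Phi^1)_1(\rn),\infty,0)$-atoms, so ``testing against judiciously chosen atoms'' cannot by itself read off membership in $(h_*^\Phi(\rn))^*$; the known proofs instead test $g$ against explicit logarithmic $\bmo$-functions (plus a closed-graph argument for the multiplier norm) to extract the weighted oscillation bounds defining $\bmo^\Phi(\rn)$ and the $L^\infty$ bound. As written, this direction is a genuine gap. In (v) there is a second, smaller but real gap: from set equality $h_*^\Phi(\rn)=h^{\log}(\rn)$ and the one-sided inequality $\|\cdot\|_{h^{\log}(\rn)}\lesssim\|\cdot\|_{h_*^\Phi(\rn)}$ of Lemma \ref{po}, the equality of duals does not follow automatically; a functional bounded for the stronger quasi-norm need not be bounded for the weaker one. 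The paper bridges this by observing that both spaces are Fr\'echet (complete metrizable) and invoking the norm-equivalence theorem \cite[Corollary 2.12(d)]{R} to get $\|\cdot\|_{h_*^\Phi(\rn)}\lesssim\|\cdot\|_{h^{\log}(\rn)}$, whence $(h_*^\Phi(\rn))^*\subset(h^{\log}(\rn))^*$ and the contradiction with (iv); you need this (or an equivalent closed-graph step). Finally, for $L_*^\Phi(\rn)\subsetneqq L^\theta(\rn)$ you only assert that a counterexample can be built; the paper gets this for free, since the Hardy spaces are defined by membership of the same maximal function in $L_*^\Phi(\rn)$ resp.\ $L^\theta(\rn)$, so set equality of the two function spaces would force $h_*^\Phi(\rn)=h^{\log}(\rn)$. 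Your counterexample route is viable (e.g.\ $f(x):=|x|^{-n}[\log(e+|x|)]^{-1}$ for $|x|\ge 1$ works), but it should be written out rather than asserted.
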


\begin{proof}
From Theorem \ref{dual2}, Lemma \ref{888} and Definition \ref{bmo1}, it is easy to deduce that
(i) holds true.

By the proof of \cite[Proposition 7.2]{Ky14} and \cite[Corollary 7.6]{yy}, we easily obtain (ii).

As for (iii), by \cite[Lemma 7.13 and Corollary 7.6]{yy2} and (i), we know that the dual space of
$h^\Phi(\rn)$ in \cite[Theorem 7.9(ii)]{yy2} coincides with the dual space of $h_*^\Phi(\rn)$.
From the proof of \cite[Theorem 7.9(ii)]{yy2},
it follows that the class of pointwise multipliers of $\bmo(\rn)$ is the space
$L^\infty(\rn)\cap(h^\Phi(\rn))^*$.
Thus,
the class of pointwise
multipliers for $\bmo(\rn)$ is the space
$L^\infty(\rn)\cap (h_*^\Phi(\rn))^*$. This finishes the proof of (iii).

Now we prove (iv). Applying the definitions of $\bmo^{\log}(\rn)$ and ${\bmo}^{\Phi}(\rn)$,
it is easy to see that
$\bmo^{\log}(\rn)\subset{\bmo}^{\Phi}(\rn)$,
$1\in{\bmo}^{\Phi}(\rn)$
and $\|1\|_{{\bmo}^{\Phi}(\rn)}=\log(1+e)$, $1\notin\bmo^{\log}(\rn)$
and hence $\bmo^{\log}(\rn)\subsetneqq{\bmo}^{\Phi}(\rn)$, which,
together with $1\in L^\infty(\rn)$, further implies that
$L^\infty(\rn)\cap\bmo^{\log}(\rn)
\subsetneqq L^\infty(\rn)\cap{\bmo}^{\Phi}(\rn)$.

As for (v), from Lemma \ref{po} and the definitions of $h_*^\Phi(\rn)$ and
$h^{\log}(\rn)$, it is easy to deduce that
\begin{equation}\label{jkl3}
h_*^\Phi(\rn)\subset h^{\log}(\rn)
\end{equation} and
\begin{equation}\label{jkl2}
\|\cdot\|_{h^{\log}(\rn)}\lesssim\|\cdot\|_{h_*^\Phi(\rn)}.
\end{equation}
Now we show that $h_*^\Phi(\rn)\subsetneqq h^{\log}(\rn)$.
Assume that, as sets,
\begin{equation}\label{jkl}
h_*^\Phi(\rn)= h^{\log}(\rn).
\end{equation}
From \cite[Example 1.1.5]{ylk}, Definitions \ref{cjs}(ii) and \ref{mshs}(ii),
we deduce that $(h_*^\Phi(\rn),\|\cdot\|_{h_*^\Phi(\rn)}^{\frac12})$
and $(h^{\log}(\rn),\|\cdot\|_{h^{\log}(\rn)}^{\frac12})$ are Fr\'echet
spaces (see, for instance, \cite[p.\,52, Definition 1]{y}),
which, together with \eqref{jkl2} and the norm-equivalence theorem
(see, for instance, \cite[Corollary 2.12(d)]{R}), further implies that
$\|\cdot\|_{h_*^\Phi(\rn)}^{\frac12}\lesssim\|\cdot\|_{h^{\log}(\rn)}^{\frac12}$
and hence $\|\cdot\|_{h_*^\Phi(\rn)}\lesssim\|\cdot\|_{h^{\log}(\rn)}$.
By this, we know that,
for any $L\in (h_*^\Phi(\rn))^\ast$ and $f\in h^{\log}(\rn)$,
$$
|L(f)|\lesssim\|f\|_{h_*^\Phi(\rn)}\lesssim\|f\|_{h^{\log}(\rn)}
$$
and hence $(h_*^\Phi(\rn))^\ast\subset(h^{\log}(\rn))^\ast$,
which, combined with (i) and (ii), leads to a
contradiction with (iv). Thus, the assumption \eqref{jkl} is not true. From this and
\eqref{jkl3}, we further deduce that $h_*^\Phi(\rn)\subsetneqq h^{\log}(\rn)$.
By this and Lemma \ref{po} [namely, $L_*^\Phi(\rn)\subset L^\theta(\rn)$],
we finally conclude that $L_*^\Phi(\rn)\subsetneqq L^\theta(\rn)$.
This finishes the proof of (v) and hence of
Theorem \ref{cheng}.
\end{proof}

Recall that, for any  $f\in h^1(\rn)$ and
$g\in \bmo(\rn)$,
the {\it product} $f\times g$ is defined to be  a Schwartz
distribution in $\cs'(\rn)$ such that, for any $\phi\in \mathcal{S}(\rn)$,
\begin{align}\label{def-product}
\langle f\times g,\,\phi \rangle:=\langle \phi g,\,f \rangle,
\end{align}
where the last bracket denotes the dual pair between
$\bmo(\rn)$ and $h^1(\rn)$. From
\cite[Theorem 3]{NY85}, we deduce that every
$\phi \in \mathcal{S}(\rn)$ is a pointwise multiplier on $\bmo(\rn)$,
which implies that
equality \eqref{def-product} is well defined. By Theorem \ref{cheng}(iii), we know that
$L^\infty(\rn)\cap {\bmo}^{\Phi}(\rn)$
characterizes the class of pointwise multipliers of
$\bmo(\rn)$.
From this, it follows that
the \emph{largest range} of $\varphi$ which makes
$\langle  f\times g,\,\varphi\rangle=\langle  g\varphi, f\rangle$ meaningful is
$\varphi\in L^\infty(\rn)\cap {\bmo}^{\Phi}(\rn)$.
By Theorem \ref{cheng}(iv), we know that
$$L^\infty(\rn)\cap{\bmo}^{\log}(\rn)\subsetneqq L^\infty(\rn)\cap {\bmo}^{\Phi}(\rn).$$
Thus, $$\langle  f\times g,\,\varphi\rangle=\langle  g\varphi, f\rangle$$ is meaningful for any
$\varphi\in L^\infty(\rn)\cap \bmo^{\log}(\rn)$.

From (i) and (iii) of Theorem \ref{cheng}, similarly to \cite[Theorem 1.1(ii)]{cky1}
and \cite[Remark 3.1]{cky1},
we deduce the following conclusion (see (i) and (ii) of Lemma \ref{mainthm3} below), whose
proof is a slight modification of the one of \cite[Theorem 1.1(ii)]{cky1}, and
we omit the details.

\begin{lemma}\label{mainthm3}
\begin{itemize}
\item[\rm(i)]
Let $\Phi$ be as in \eqref{41}
and $h_*^\Phi(\rn)$ as in Definition \ref{cjs}.
Then there exist two bounded bilinear operators
$$S:\, h^1(\rn)\times \bmo(\rn)\to L^1(\rn)$$
and
$$T:\, h^1(\rn)\times \bmo(\rn)\to h_*^\Phi(\rn)$$
such that, for any $(f,g)\in h^1(\rn)\times \bmo(\rn)$,
\begin{align*}
f\times g=S(f,\,g)+T(f,\,g)\qquad in\;\cs'(\rn).
\end{align*}
Moreover, for any given $(f,g)\in h^1(\rn)\times \bmo(\rn)$ and any
$\varphi\in L^\infty(\rn)\cap {\bmo}^{\Phi}(\rn)$,
$$
\la f\times g,\varphi\ra=\la \varphi, S(f,g)\ra+\la \varphi,T(f,g)\ra.$$

\item[\rm(ii)]
Let $h^{\log}(\rn)$ be as in Definition \ref{mshs}.
Then there exist two bounded bilinear operators
$$S:\, h^1(\rn)\times \bmo(\rn)\to L^1(\rn)$$
and
$$T:\, h^1(\rn)\times \bmo(\rn)\to h^{\log}(\rn)$$
such that, for any $(f,g)\in h^1(\rn)\times \bmo(\rn)$,
\begin{align*}
f\times g=S(f,\,g)+T(f,\,g)\qquad in\;\cs'(\rn).
\end{align*}
Moreover, for any given $(f,g)\in h^1(\rn)\times \bmo(\rn)$ and any
$\varphi\in L^\infty(\rn)\cap {\bmo}^{\log}(\rn)$,
$$
\la f\times g,\varphi\ra=\la \varphi, S(f,g)\ra+\la \varphi,T(f,g)\ra.$$
\end{itemize}
\end{lemma}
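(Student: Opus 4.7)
The plan is to construct $S$ and $T$ explicitly via an $L^\infty$-type local atomic decomposition of $h^1(\rn)$ and then verify boundedness and the decomposition identity using the atomic characterization in Theorem \ref{atom ch} and the finite atomic characterization in Theorem \ref{finite} applied to the special case $h_*^\Phi(\rn)$ through Remark \ref{ReHpq}(ii). For any $f\in h^1(\rn)$ I would take a local atomic decomposition $f=\sum_j\lambda_j a_j$ with $\supp a_j\subset Q_j$, $\|a_j\|_{L^\infty(\rn)}\le|Q_j|^{-1}$, $\int_\rn a_j\,dx=0$ whenever $\ell(Q_j)<1$, and $\sum_j\lambda_j\lesssim\|f\|_{h^1(\rn)}$. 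For each $j$, I would split $a_j g$ into an ``oscillation'' part and a ``constant'' part by setting
\begin{align*}
S(f,g)&:=\sum_{j:\,\ell(Q_j)<1}\lambda_j a_j(g-g_{Q_j})+\sum_{j:\,\ell(Q_j)\geq 1}\lambda_j a_j g,\\
T(f,g)&:=\sum_{j:\,\ell(Q_j)<1}\lambda_j g_{Q_j} a_j,
\end{align*}
where $g_{Q_j}:=\frac1{|Q_j|}\int_{Q_j}g\,dx$.

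For the $L^1$-boundedness of $S$, the small-cube summands satisfy $\|a_j(g-g_{Q_j})\|_{L^1(\rn)}\leq\|a_j\|_{L^\infty(\rn)}\int_{Q_j}|g-g_{Q_j}|\,dx\lesssim\|g\|_{\bmo(\rn)}$, while for $\ell(Q_j)\geq 1$ the definition of $\bmo(\rn)$ directly gives $\|a_j g\|_{L^1(\rn)}\lesssim\|g\|_{\bmo(\rn)}$. Summing over $j$ yields $\|S(f,g)\|_{L^1(\rn)}\lesssim\|f\|_{h^1(\rn)}\|g\|_{\bmo(\rn)}$. For the $h_*^\Phi$-boundedness of $T$, the critical input is the standard $\bmo(\rn)$ bound $|g_{Q_j}|\lesssim\log(e+|Q_j|^{-1})\|g\|_{\bmo(\rn)}$ valid for $\ell(Q_j)<1$, obtained by chaining $Q_j$ to a unit cube. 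Combining this with Lemma \ref{888} (namely $\|\mathbf 1_{Q_j}\|_{L_*^\Phi(\rn)}\sim|Q_j|/\log(e+|Q_j|^{-1})$) shows that $g_{Q_j}a_j/[C\|g\|_{\bmo(\rn)}]$ is, for a suitable universal $C$, a local $(L_*^\Phi(\rn),\infty,0)$-atom supported in $Q_j$, whence the atomic characterization in Theorem \ref{atom ch} (applied via Remark \ref{ReHpq}(ii)) gives $\|T(f,g)\|_{h_*^\Phi(\rn)}\lesssim\|f\|_{h^1(\rn)}\|g\|_{\bmo(\rn)}$. Independence of the atomic decomposition, and hence the fact that $S$ and $T$ are well-defined bounded bilinear operators on $h^1(\rn)\times\bmo(\rn)$, follows from a standard density argument using the finite atomic characterization in Theorem \ref{finite}; the identity $f\times g=S(f,g)+T(f,g)$ in $\cs'(\rn)$ is first verified on finite atomic sums, where it reduces to $a_j g=a_j(g-g_{Q_j})+g_{Q_j} a_j$ tested against $\phi\in\cs(\rn)$, and is then propagated by continuity.

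For the \emph{moreover} assertion of part (i), $\la\varphi,S(f,g)\ra$ is well defined because $\varphi\in L^\infty(\rn)$ and $S(f,g)\in L^1(\rn)$, while $\la\varphi,T(f,g)\ra$ is well defined because Theorem \ref{cheng}(i) identifies $(h_*^\Phi(\rn))^*={\bmo}^{\Phi}(\rn)\ni\varphi$. Theorem \ref{cheng}(iii) ensures $\varphi g\in\bmo(\rn)$, so $\la f\times g,\varphi\ra=\la\varphi g,f\ra$ is defined through the $(h^1(\rn),\bmo(\rn))$-duality, and the required identity follows from the atom-level equality together with the continuity of all pairings in play. Part (ii) is obtained from part (i) by composing the operator $T$ with the bounded inclusion $h_*^\Phi(\rn)\hookrightarrow h^{\log}(\rn)$ from Theorem \ref{cheng}(v); the pairing identity then makes sense for $\varphi\in L^\infty(\rn)\cap{\bmo}^{\log}(\rn)$ because, by Theorem \ref{cheng}(iv), every such $\varphi$ also lies in $L^\infty(\rn)\cap{\bmo}^{\Phi}(\rn)$, to which part (i) applies.

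The principal technical obstacle is matching logarithmic scales in the $T$-estimate: one must cancel the logarithmic loss $\|\mathbf 1_{Q_j}\|_{L_*^\Phi(\rn)}\sim|Q_j|/\log(e+|Q_j|^{-1})$ against the logarithmic growth $|g_{Q_j}|\lesssim\log(e+|Q_j|^{-1})\|g\|_{\bmo(\rn)}$ of $\bmo$-averages on small cubes. It is exactly this cancellation that forces $h_*^\Phi(\rn)$, rather than $h^1(\rn)$ itself, to be the natural receptacle of the ``bad'' part, and, through the strict inclusion $h_*^\Phi(\rn)\subsetneqq h^{\log}(\rn)$ of Theorem \ref{cheng}(v), distinguishes the sharpness of \eqref{cao1} from the non-sharpness of \eqref{cao2}.
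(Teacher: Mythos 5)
There is a genuine gap, and it is exactly the point that distinguishes this lemma from the older result of Bonami and Feuto: the lemma asserts the existence of bounded \emph{bilinear} operators $S$ and $T$, whereas your construction defines $S(f,g)$ and $T(f,g)$ through a chosen atomic decomposition $f=\sum_j\lambda_ja_j$. That decomposition is neither unique nor linearly dependent on $f$, so the maps you write down are not well defined as operators without an arbitrary selection, and in any case are not linear in $f$: two different atomic decompositions of the same $f$ split $f\times g$ into different pairs $(S(f,g),T(f,g))$ (they agree only modulo $L^1(\rn)\cap h_*^\Phi(\rn)$), and a decomposition of $f_1+f_2$ bears no relation to decompositions of $f_1$ and $f_2$. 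Your appeal to ``a standard density argument using the finite atomic characterization in Theorem \ref{finite}'' cannot repair this, because finite atomic decompositions are just as non-unique and no canonical linear selection exists. What your argument actually yields is the \emph{linear} decomposition $h^1(\rn)\times\bmo(\rn)\subset L^1(\rn)+h_*^\Phi(\rn)$ of Bonami--Feuto \cite{bf}, which the introduction of the paper explicitly contrasts with the bilinear statement being proved here. The paper itself does not argue atomically: it obtains Lemma \ref{mainthm3} as a slight modification of the wavelet proof of \cite[Theorem 1.1(ii)]{cky1}, where the product $f\times g$ is split into wavelet paraproduct pieces; since the wavelet expansion is linear in $f$ and in $g$, each piece is genuinely bilinear, and the boundedness into $L^1(\rn)$ and $h_*^\Phi(\rn)$ (resp.\ $h^{\log}(\rn)$) is proved piece by piece. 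Without some such linear decomposition device, your approach cannot produce the operators the lemma claims.

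A secondary, more repairable gap is the $h_*^\Phi(\rn)$ bound for your $T$. Knowing that each $g_{Q_j}a_j/(C\|g\|_{\bmo(\rn)})$ is a local $(L_*^\Phi(\rn),\infty,0)$-atom does not finish the estimate: Theorem \ref{atom ch} requires you to control
$\bigl\|\{\sum_j[\lambda_j\|g\|_{\bmo(\rn)}/\|\mathbf 1_{Q_j}\|_{L_*^\Phi(\rn)}]^s\mathbf 1_{Q_j}\}^{1/s}\bigr\|_{L_*^\Phi(\rn)}$,
and this is not implied by the $\ell^1$ information $\sum_j\lambda_j\lesssim\|f\|_{h^1(\rn)}$ via anything cited in your argument; after invoking Lemma \ref{888} it reduces to an inequality of the type
$\|\sum_j\lambda_j|Q_j|^{-1}\log(e+|Q_j|^{-1})\mathbf 1_{Q_j}\|_{L_*^\Phi(\rn)}\lesssim\sum_j\lambda_j$,
which is a nontrivial lemma of Bonami--Iwaniec--Jones--Zinsmeister/Bonami--Feuto type that you would have to state and prove (it appears nowhere in this paper). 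The parts of your write-up concerning $S$, the well-definedness of the pairings in the ``moreover'' assertions, and the deduction of (ii) from (i) via Lemma \ref{po} and Theorem \ref{cheng}(iv) are fine, but the bilinearity issue is fatal to the proposal as it stands.
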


We point out that the conclusions of (i) and (ii) of
Lemma \ref{mainthm3} are simply
denoted, respectively, by \eqref{cao1} and \eqref{cao2}.

\begin{theorem}\label{449}
Let $\Phi$ be as in \eqref{41}
and $h_*^\Phi(\rn)$ as in Definition \ref{cjs}. Assume that $\cy\subset h_*^\Phi(\rn)$
is a quasi-Banach spaces satisfying
$\|\cdot\|_{h_*^\Phi(\rn)}\le C\|\cdot\|_{\cy}$ with $C$ being a positive constant,
and Lemma \ref{mainthm3}(i) with $h_*^\Phi(\rn)$ therein replaced
by $\cy$. Then
$L^\infty(\rn)\cap (\cy)^\ast=L^\infty(\rn)\cap (h_*^\Phi(\rn))^\ast$.
\end{theorem}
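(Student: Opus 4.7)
The plan is to prove the equality by two inclusions, one of which is immediate and one which is the substantive content of the theorem.

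For the easy inclusion $L^\infty(\rn)\cap (h_*^\Phi(\rn))^\ast\subset L^\infty(\rn)\cap (\cy)^\ast$, I would invoke the continuous embedding $\cy\hookrightarrow h_*^\Phi(\rn)$ given by the hypothesis $\|\cdot\|_{h_*^\Phi(\rn)}\le C\|\cdot\|_{\cy}$. By Theorem \ref{cheng}(i) combined with Theorem \ref{dual2}, any $g\in L^\infty(\rn)\cap (h_*^\Phi(\rn))^\ast$ acts on $h_*^\Phi(\rn)$ through the integration pairing $f\mapsto \int fg$; restricting this functional to $\cy$ yields a bounded functional by the norm inequality, so $g\in L^\infty(\rn)\cap(\cy)^\ast$.

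For the nontrivial reverse inclusion, I would reduce via Theorem \ref{cheng}(iii), which identifies $L^\infty(\rn)\cap (h_*^\Phi(\rn))^\ast$ with the class of pointwise multipliers of $\bmo(\rn)$. It therefore suffices to show that any $g\in L^\infty(\rn)\cap (\cy)^\ast$ is a pointwise multiplier of $\bmo(\rn)$, equivalently (since $\bmo(\rn)=(h^1(\rn))^\ast$), to show that for any $f\in\bmo(\rn)$ the map $h\mapsto \int gfh$ defines a bounded functional on the dense subspace $h^1_{\fin}(\rn)\subset h^1(\rn)$ with operator norm controlled by $(\|g\|_{L^\infty(\rn)}+\|g\|_{(\cy)^\ast})\|f\|_{\bmo(\rn)}$. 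For such an $h$, the assumed bilinear decomposition (the modified Lemma \ref{mainthm3}(i) with $\cy$ in place of $h_*^\Phi(\rn)$) yields
\begin{align*}
h\times f=S(h,f)+T(h,f)\qquad\text{in }\cs'(\rn),
\end{align*}
with $\|S(h,f)\|_{L^1(\rn)}+\|T(h,f)\|_{\cy}\lesssim \|h\|_{h^1(\rn)}\|f\|_{\bmo(\rn)}$. Since $g$ belongs to the domain $L^\infty(\rn)\cap (\cy)^\ast$, both pairings $\la g,S(h,f)\ra_{L^\infty(\rn),L^1(\rn)}$ and $\la g,T(h,f)\ra_{(\cy)^\ast,\cy}$ are well defined, and the ``Moreover'' part of the modified lemma gives
\begin{align*}
\la h\times f,g\ra=\la g,S(h,f)\ra+\la g,T(h,f)\ra,
\end{align*}
which is bounded by $(\|g\|_{L^\infty(\rn)}+\|g\|_{(\cy)^\ast})\|h\|_{h^1(\rn)}\|f\|_{\bmo(\rn)}$. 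Unwinding the definition \eqref{def-product}, for $h$ a finite linear combination of $h^1$-atoms this pairing equals the ordinary integral $\int gfh$, so by density of $h^1_{\fin}(\rn)$ in $h^1(\rn)$ we conclude $gf\in\bmo(\rn)$ with the desired bound, proving $g$ is a pointwise multiplier.

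The main obstacle I expect is the precise identification of the distributional pairing $\la h\times f,g\ra$ with the concrete integral $\int gfh$, since the definition \eqref{def-product} is originally stated only for test functions $\phi\in\cs(\rn)$ rather than for $g\in L^\infty(\rn)\cap(\cy)^\ast$. To overcome this, I would appeal to the atomic structure of $h^1_{\fin}(\rn)$: for an atom $a$ with compact support, $ga\in L^\infty(\rn)$ has compact support, so $\int gfa=\int f(ga)$ is a genuine convergent integral, and the modified lemma's ``Moreover'' clause (which extends \eqref{def-product} to multipliers in $L^\infty(\rn)\cap(\cy)^\ast$ via the bilinear decomposition) ensures consistency. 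Everything else reduces to tracking norms through the two summands $S$ and $T$ of the decomposition.
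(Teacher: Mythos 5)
Your proposal is correct and follows essentially the same route as the paper: the easy inclusion comes from restricting functionals along the continuous embedding $\cy\subset h_*^\Phi(\rn)$, and the hard inclusion uses the assumed bilinear decomposition with $\cy$ together with its ``Moreover'' clause to show that any $g\in L^\infty(\rn)\cap(\cy)^\ast$ multiplies $\bmo(\rn)$ into $\bmo(\rn)$, whence Theorem \ref{cheng}(iii) gives $g\in L^\infty(\rn)\cap(h_*^\Phi(\rn))^\ast$. Your extra care in identifying $\la h\times f,g\ra$ with the concrete integral on finite atomic combinations is a harmless refinement of a point the paper passes over silently, not a different argument.
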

\begin{proof}
From the fact that $\cy$ satisfies Lemma \ref{mainthm3}(i) with $h_*^\Phi(\rn)$ therein replaced
by $\cy$, it follows that, for any given $(f,g)\in h^1(\rn)\times \bmo(\rn)$,
\begin{align*}
\langle\varphi,S(f,\,g)\rangle+\langle\varphi,T(f,\,g)\rangle=
\langle f\times g,\,\varphi\rangle=\la\varphi g,f\ra
,\qquad \forall\, \varphi\in L^\infty(\rn)\cap (\cy)^\ast,
\end{align*}
where $S:\ h^1(\rn)\times\bmo(\rn)\to L^1(\rn)$ and
$T:\ h^1(\rn)\times\bmo(\rn)\to \cy$ are two bounded bilinear operators.
From this, we deduce that $\varphi$ is a pointwise multiplier of $\bmo(\rn)$, which,
by Theorem \ref{cheng}(iii), implies that $\varphi\in L^\infty(\rn)\cap (h_*^\Phi(\rn))^\ast$.
We therefore obtain
\begin{equation}\label{pp}
L^\infty(\rn)\cap (\cy)^\ast\subset L^\infty(\rn)\cap (h_*^\Phi(\rn))^\ast.
\end{equation}
From $\cy\subset h_*^\Phi(\rn)$ and the assumption that $\|\cdot\|_{h_*^\Phi(\rn)}\lesssim\|\cdot\|_{\cy}$,
we deduce that, for any $L\in (h_*^\Phi(\rn))^\ast$ and $f\in\cy$,
$$
|L(f)|\lesssim\|f\|_{h_*^\Phi(\rn)}\lesssim\|f\|_{\cy},
$$
which implies that $L\in(\cy)^\ast$ and hence $(h_*^\Phi(\rn))^\ast\subset (\cy)^\ast$.
By this and \eqref{pp},
we further conclude that
$$ L^\infty(\rn)\cap (\cy)^\ast\subset L^\infty(\rn)\cap (h_*^\Phi(\rn))^\ast\subset L^\infty(\rn)\cap (\cy)^\ast,$$
which implies that $L^\infty(\rn)\cap (\cy)^\ast=L^\infty(\rn)\cap (h_*^\Phi(\rn))^\ast$.
This finishes the proof of Theorem \ref{449}.
\end{proof}

\begin{remark}\label{mm}
\begin{itemize}
\item[\rm (i)]The sharpness of \eqref{cao1} is implied by Theorem \ref{449}. Indeed,
suppose that Lemma \ref{mainthm3}(i) holds true with $h_*^\Phi(\rn)$ therein replaced by a smaller quasi-Banach spaces $\cy$. From Theorem \ref{449}, we deduce that
$L^\infty(\rn)\cap (\cy)^\ast=L^\infty(\rn)\cap (h_*^\Phi(\rn))^\ast$.
In this sense, \eqref{cao1} [namely, Lemma \ref{mainthm3}(i)] is sharp.
It is still unknown whether or not $h_*^\Phi(\rn)$
is indeed the smallest space, in the sense of the inclusion of sets, having the property
as in Lemma \ref{mainthm3}(i); see \cite{BK}.

\item[\rm (ii)]
From Theorem \ref{cheng}(v), it follows that $h_*^\Phi(\rn)\subsetneqq h^{\log}(\rn)$.
Thus, the bilinear decomposition
in Lemma \ref{mainthm3}(ii) [namely, \eqref{cao2}]
is not sharp.
\end{itemize}
\end{remark}

\bigskip

\noindent Yangyang Zhang, Dachun Yang (Corresponding author) and Wen Yuan

\smallskip

\noindent  Laboratory of Mathematics and Complex Systems
(Ministry of Education of China),
School of Mathematical Sciences, Beijing Normal University,
Beijing 100875, People's Republic of China

\smallskip

\noindent {\it E-mails}: \texttt{yangyzhang@mail.bnu.edu.cn} (Y. Zhang)

\noindent\phantom{{\it E-mails:}} \texttt{dcyang@bnu.edu.cn} (D. Yang)

\noindent\phantom{{\it E-mails:}} \texttt{wenyuan@bnu.edu.cn} (W. Yuan)
\end{document}